\documentclass[11pt,a4paper]{amsart}
\usepackage{amssymb,xspace}
\usepackage{amstext}
\usepackage{fourier}
\theoremstyle{plain}
\usepackage{amsbsy,amssymb,amsfonts,latexsym}

\marginparwidth=10 true mm
\oddsidemargin=0 true mm
\evensidemargin=0 true mm
\marginparsep=5 true mm
\topmargin=0 true mm
\headheight=8 true mm
\headsep=4 true mm
\topskip=0 true mm
\footskip=15 true mm

\setlength{\textwidth}{150 true mm}
\setlength{\textheight}{220 true mm}
\setlength{\hoffset}{8 true mm}
\setlength{\voffset}{2 true mm}

\parindent=0 true mm

\usepackage{enumerate}
\usepackage{graphics}

\date{\today}
\title{Common hypercyclic vectors for high dimensional families of operators}
\author{Fr\'ed\'eric Bayart}
\address{
Clermont Universit\'e, Universit\'e Blaise Pascal, Laboratoire de Math\'ematiques\\
 BP 10448, F-63000 Clermont-Ferrand -\newline
CNRS, UMR 6620, Laboratoire de Math\'ematiques, F-63177 Aubiere
}
\email{Frederic.Bayart@math.univ-bpclermont.fr}

\subjclass{47B37}

\keywords{hypercyclic operators}

\newcommand{\veps}{\varepsilon}

\def\RR{\mathbb R}

\def\NN{\mathbb N}

\def\TT{\mathbb T}

\def\CC{\mathbb C}

\newcommand{\diam}{\mathrm{diam}}
\def\bd{\mathbb B_d}
\def\hd{\mathbb H_d}
\def\hhd{\mathcal H^2(\hd)}
\def\H{\mathcal H}
\def\bw{\mathbf w}
\newcommand{\ldens}{\underline{\textrm{dens}}}

\newcommand{\mynegspace}{\hspace{-0.12em}}

\newcommand{\lvvvert}{\rvert\mynegspace\rvert\mynegspace\rvert}
\newcommand{\rvvvert}{\rvert\mynegspace\rvert\mynegspace\rvert}

\newtheorem{theorem}{Theorem}[section]

\newtheorem{lemma}[theorem]{Lemma}

\newtheorem{proposition}[theorem]{Proposition}

\newtheorem{corollary}[theorem]{Corollary}

{\theoremstyle{definition}}
{\theoremstyle{definition}}

{\theoremstyle{definition}\newtheorem{example}[theorem]{Example}}

{\theoremstyle{definition}\newtheorem{definition}[theorem]{Definition}}

{\theoremstyle{definition}}

{\theoremstyle{definition}\newtheorem{remark}[theorem]{Remark}}

\newtheorem{question}[theorem]{Question}




\begin{document}

\begin{abstract}
Let $(T_\lambda)_{\lambda\in\Lambda}$ be a family of operators acting on a $F$-space $X$, where the parameter space $\Lambda$ is a subset of $\mathbb R^d$. We give sufficient conditions
on the family to yield the existence of a vector $x\in X$ such that, for any $\lambda\in\Lambda$, the set $\big\{T_\lambda^n x;\ n\geq 1\big\}$ is dense in $X$. We obtain results valid for any value of $d\geq 1$ whereas the previously known results where restricted to $d=1$. Our methods also shed new light on the one-dimensional case.
\end{abstract}

\maketitle

\section{Introduction}
Let $X$ be a separable $F$-space (namely a separable topological vector space which carries a complete translation-invariant metric), and let $T\in\mathcal L(X)$. We say that $T$ is hypercyclic provided there exists a vector $x\in X$ such that its orbit $O(x,T)=\{T^n x;\ n\geq 0\}$ is dense in $X$. The vector $x$ is called a hypercyclic vector for $T$ and the set of hypercyclic vectors for $T$ will be denoted by $HC(T)$. More generally, let $(T_n)$ be a sequence of operators acting on $X$. We say that $x$ is hypercyclic for $(T_n)$ if $\{T_n x;\  n\geq 0\}$ is dense in $X$ and we denote by $HC(T_n)$ the set of hypercyclic vectors for $(T_n)$. 

Hypercyclic operators have been intensively studied in the last few decades (see \cite{BM09} and \cite{GePeBook}). One of the most interesting problem in this field is to find, for a given family of hypercyclic operators, a common hypercyclic vector. It turns out that, as soon as $T$ is hypercyclic, $HC(T)$ is a residual subset of $X$. Hence, for any countable set $\Lambda$, provided each $T_\lambda$, $\lambda\in\Lambda$, is hypercyclic, $\bigcap_{\lambda\in\Lambda}HC(T_\lambda)$ is a residual subset of $X$ and in particular is nonempty.

When $\Lambda$ is uncountable, the situation is more difficult and has attracted the attention of many mathematicians . In the litterature, we may find two kinds of results regarding common hypercyclicity.
\begin{itemize}
\item algebraic results: these results were first obtained by Leon and M\"uller in \cite{LeMu04} when they showed that, for any operator $T\in\mathcal L(X)$ and any $\theta\in\RR$, $HC(e^{i\theta}T)=HC(T)$. This result was extended to $C_0$-semigroup in  \cite{CoMuPe07} by Conejero, M\"uller and Peris: if $(T_t)_{t>0}$ is a strongly continuous group on $X$, then for any $t>0$, $HC(T_t)=HC(T_1)$.
\item analytic results: the pioneering work in that direction is due to Abakumov and Gordon (\cite{AG}) who showed that $\bigcap_{\lambda>1}HC(\lambda B)$ is nonempty, where $B$ is the (unweighted) backward shift on $\ell_2$. This was shortly later improved by Costakis and Sambarino in \cite{CoSa04a} who showed that  $\bigcap_{\lambda>1}HC(\lambda B)$ is residual. Costakis and Sambarino gave a rather general criterion for a family $(T_\lambda)_{\lambda\in I}$ indexed by an interval $I$ to have a residual set of common hypercyclic vectors. This criterion may be applied to many classical sequences of operators, like translation operators $\tau_a$, $a\in\CC\backslash\{0\}$, which are defined on the set of entire functions $H(\CC)$ by $\tau_a(f)=f(\cdot+a)$. More precisely, the criterion shows that $\bigcap_{\theta\in [0,2\pi]}HC(\tau_{e^{i\theta}})$ is nonempty.
\end{itemize}
 
It turns out that both the algebraic results and the analytic results are one-dimensional results. They show that certain families indexed by a subset of $\RR$ have a common hypercyclic vector. Sometimes, we can combine the two methods to obtain two-dimensional results. For instance, by the analytic method, you can show that $\bigcap_{\lambda >1}HC(\lambda B)$ is residual and by the algebraic method, you can show that for any $\theta\in\mathbb R$ and any $\lambda >1$, $HC(e^{i\theta}\lambda B)=HC(\lambda B)$. This yields the following two-dimensional result: $\bigcap_{|\lambda|>1}HC(\lambda B)$ is residual. A similar argument is used to prove that $\bigcap_{a\in\mathbb C\backslash\{0\}}HC(\tau_a)$ is a residual subset of $H(\mathbb C)$.

It was observed by Borichev (see \cite{AG}) that there are dimensional obstructions to the existence of a common hypercyclic vector. Indeed, let $\Lambda\subset (1,+\infty)^2$ and for $\lambda=(s,t)\in\Lambda$, define $T_\lambda=sB\oplus tB$ acting on $\ell_2\oplus\ell_2$. Then each $T_\lambda$ is hypercyclic but if $\bigcap_{\lambda\in\Lambda}HC(T_\lambda)$ is nonempty, then $\Lambda$ has Lebesgue measure zero. See also \cite{Shk10} for other limitations relative to the dimension of the parameter space.

\medskip

However, there are at least two seminal papers where two-dimensional results do appear. The first one is due to Shkarin in \cite{Shk10} who has proved that $\bigcap_{a,b\in\CC^*}HC(b\tau_a)$ is a residual subset of $H(\CC)$. The proof combines a two-dimensional analytic result, precisely $\bigcap_{b>0, a\in S^1}HC(b\tau_a)$ is residual, with two successive applications of the algebraic results. The second one is due to Tsirivas in \cite{Tsi15b} (see also \cite{Tsi15a}). Tsirivas shows that if $(\lambda_n)$ is an increasing sequence of positive real numbers tending to $+\infty$ such that $\lambda_{n+1}/\lambda_n$ goes to 1, then $\bigcap_{a\in \CC\backslash\{0\}}HC(\tau_{\lambda_n a})$ is a residual subset of $H(\CC)$. This is a two-dimensional analytic result, since we cannot apply the algebraic results when $\lambda_n\neq n$.

Both results of Shkarin and Tsirivas are truely "tours de force" which seem specific to the translation operators on $H(\mathbb C)$ or at least to operators very similar to them. In particular, it is not clear if their arguments may be adapted to higher-dimensional families  or to operators acting on a Banach space and not on a Fr\'echet space. In this paper, we provide a new approach which allows us to prove common hypercyclic results for general high-dimensional families. The very simple main idea is the following. The key point in Borichev's example is the fact that if $\lambda^n B^n x$ is close to $y$, then $\mu^n B^n x$ cannot be close to $y$ provided $\mu$ is far away from $\lambda$. Now, if you are working with the group of translations $(\tau_a)$, then $f(x+na)$ and $f(x+nb)$ can be simultaneously close to $g$ even if $b$ is far away from $a$. Indeed, this just mean that $f$ has to be close to $g$ on the balls centered in $-na$ and in $-nb$, and these conditions are in some way independent. This will allow us, in order to construct a common hypercyclic vector $f$, to use the same $n$ for different values of the parameter!

\medskip

Here is our main result.

\begin{theorem}\label{THMMAININTRO}
Let $(T_a)_{a\in\mathbb R^d}$ be a strongly continuous group on $\mathbb R^d$ with the uniform mixing property. Then $\bigcap_{a\in\mathbb R^d\backslash\{0\}}HC(T_a)$ is a residual subset of $X$ 
\end{theorem}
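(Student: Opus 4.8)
The plan is to run a Baire category argument. Since $X$ is a separable $F$-space it is a Baire space, so it suffices to exhibit a dense $G_\delta$ set inside $\bigcap_{a\neq 0}HC(T_a)$. Fix a dense sequence $(y_j)$ in $X$ and, for a compact $K\subset\RR^d\setminus\{0\}$, a target $y$ and $\veps>0$, set
$$G(K,y,\veps)=\{x\in X:\ \forall a\in K,\ \exists n\ge 1,\ d(T_{na}x,y)<\veps\}.$$
Exhausting $\RR^d\setminus\{0\}$ by countably many such compacta (for instance closed annuli) and unwinding the definition of $HC(T_a)$ with the dense sequence $(y_j)$, one checks that $\bigcap_{a\neq 0}HC(T_a)$ is a countable intersection of sets of the form $G(K,y_j,\veps)$ with $\veps$ rational. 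So it is enough to prove that each $G(K,y,\veps)$ is dense and open.

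Openness is the first technical point, and it is where compactness of $K$ enters. For fixed $x$ the map $a\mapsto d(T_{na}x,y)$ is continuous by strong continuity of the group, so $W_n(x)=\{a\in K:\ d(T_{na}x,y)<\veps\}$ is relatively open and $x\in G(K,y,\veps)$ means $\bigcup_n W_n(x)=K$. By compactness this is witnessed by finitely many times $n_1,\dots,n_p$, and the function $a\mapsto\min_{1\le k\le p}d(T_{n_k a}x,y)$ is then $<\veps$ on all of $K$; by joint continuity of the action and compactness of $K$ its maximum over $K$ stays $<\veps$ for $x'$ close to $x$, placing a whole neighbourhood of $x$ inside $G(K,y,\veps)$.

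Everything then reduces to density. Given a nonempty open $U$, I would pick $u$ in $U$ from the dense set $X_0$ furnished by the uniform mixing property and look for $x=u+\sum_i T_{-n_i c_i}y$ in $U$, where the $c_i$ are centres of small boxes $B_i$ covering $K$ and the $n_i$ are integer times to be chosen. Using the group law $T_{n_i a}T_{-n_j c_j}=T_{n_i a-n_j c_j}$, the element $T_{n_i a}x$ for $a\in B_i$ splits into three pieces: the \emph{diagonal} term $T_{n_i(a-c_i)}y$, which is close to $y$ as soon as $n_i\,\diam(B_i)$ is small, by continuity of the orbit map of $y$; the \emph{drift} term $T_{n_i a}u$, which tends to $0$ by uniform mixing since $a$ stays in a compact set away from the origin; and the \emph{cross} terms $T_{n_i a-n_j c_j}y$ with $j\neq i$, which are again controlled by uniform mixing provided the planted points $n_i c_i$ are pairwise far apart in $\RR^d$.

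The heart of the matter, and the step I expect to be the main obstacle, is reconciling two competing demands on the times. On one hand the diagonal term forces the boxes to shrink like $1/n_i$, so covering a fixed $d$-dimensional compactum $K$ requires of order $n^d$ boxes at scale $1/n$; on the other hand the cross terms demand that the points $n_i c_i$ be well separated, so that a single large time cannot be dilated onto too many centres. Simply letting the times grow very fast, which trivially separates the points $n_i c_i$, is incompatible with keeping $n_i\,\diam(B_i)$ small across a full cover. The resolution is the idea announced in the introduction: to \emph{reuse one time $n$ for many parameters at once}, assigning to each moderate time $n$ a batch of boxes whose centres $c$ are chosen so that the dilated points $nc$ are mutually separated, and distributing the whole cover of $K$ over a controlled band of times. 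The key estimate is a packing/covering count in $\RR^d$ showing that batches of this kind do cover $K$ while keeping all planted points separated; this is exactly where the dimension $d$ enters quantitatively, and where uniform (rather than merely pointwise) mixing is essential, since it must kill all drift and cross terms uniformly over the compact parameter region. Once the covering is arranged, summability of the perturbation together with the three estimates above places $x$ in $U\cap G(K,y,\veps)$, completing the density step and hence the proof.
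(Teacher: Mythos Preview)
Your high-level Baire category setup and the openness argument are fine, and you have correctly isolated the combinatorial heart of the matter: one must cover a compact $K\subset\RR^d\setminus\{0\}$ by boxes of size $\sim 1/n_i$ while keeping the dilated centres $n_i c_i$ pairwise far apart, and the resolution is indeed to reuse a single time $n$ for many parameters. This is exactly the paper's covering argument (Theorem~\ref{THMCOVERING1} and the construction in Theorem~\ref{THMMAINCOMMON}).

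The genuine gap is in your density step, where you misread what the uniform mixing property provides. You write ``pick $u$ in $U$ from the dense set $X_0$ furnished by the uniform mixing property'' and then build $x=u+\sum_i T_{-n_ic_i}y$, controlling the ``drift'' term $T_{n_ia}u$ and the ``cross'' terms $T_{n_ia-n_jc_j}y$ by claiming they tend to $0$ ``by uniform mixing''. But the uniform mixing property, as defined in the paper, gives \emph{no} dense set and \emph{no} decay of orbits: it is a black-box existence statement saying that for any open $U,V$ there is $C>0$ such that whenever $a_1,\dots,a_p$ are $C$-separated and $C$-large, some $f\in U$ satisfies $T_{a_j}f\in V$ for all $j$. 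In particular it never asserts $T_af\to 0$ for any $f$; for the translation group on $H(\CC)$ this is even false for every nonzero $f$. What you are implicitly using is the \emph{sufficient condition} of Proposition~\ref{PROPEXUNIFORM} (a dense set on which $\|\sum T_{a_i}f\|$ is small for separated $(a_i)$), which is strictly stronger than uniform mixing.

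The fix is not to build $x$ explicitly but to do what the paper does in the proof of Theorem~\ref{THMCOVERING1}: first produce the finite separated family $(n_ic_i)$ from your covering, \emph{then} invoke uniform mixing once to obtain directly an $f\in U$ with $T_{n_ic_i}f$ in a small neighbourhood of $y$ for every $i$; a short equicontinuity argument (writing $T_{na}f-y=T_{na-n_ic_i}(T_{n_ic_i}f-y)+T_{na-n_ic_i}y-y$) then propagates this to all $a$ in the box around $c_i$. Your combinatorial outline plugs into that step without change.
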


We shall define later the uniform mixing property, but it is a rather natural condition which is satisfied by many operator groups. By applying Theorem \ref{THMMAININTRO}, we will get many new examples of common hypercyclicity which are not reachable with the previously known arguments and for high-dimensional families! 

We shall use two main ingredients in our proof. Firstly we translate the problem of finding a common hypercyclic vector to the problem of finding a suitable covering of compact subsets of $\RR^d$. Secondly we give a way to produce such coverings. It is based on a method to split sequences of real numbers which are going to infinity but not too quickly, and in fact our statement is more general than Theorem \ref{THMMAININTRO} since it covers sequences $(T_{\lambda_n a})$ and not only iterates $(T_{na})$.

\medskip

Even for operator groups like the translation group, there are obstructions to the existence of a common hypercyclic vector for the sequences of operators $(\tau_{\lambda_n a})$, $a\in\CC\backslash\{0\}$, which is linked to the growth of the sequence $(\lambda_n)$. Indeed, Costakis, Tsirivas and Vlachou have shown in \cite{CTV15} that, if $\liminf_{n\to+\infty}\frac{\lambda_{n+1}}{\lambda_n}>2$, then $\bigcap_{a\in \CC\backslash\{0\}}HC(\tau_{na})$ is empty. This shows that, in some sense, the result of Tsirivas quoted above is optimal, but leaves open  the case $\lambda_n=q^n$ with $q\in (1,2]$. Using our covering argument, we are able to extend this result to the remaining case and to any operator group! 
\begin{theorem}\label{THMINTROOBSTRUCTION}
 Let $(T_a)_{a\in\mathbb R^d}$ be a strongly continuous operator group on $X$ and let $(\lambda_n)$ be an increasing sequence of positive real numbers such that $\liminf_n \frac{\lambda_{n+1}}{\lambda_n}>1$. Then $\bigcap_{a\in \RR^d\backslash\{0\}}HC(T_{\lambda_n a})$ is empty.
\end{theorem}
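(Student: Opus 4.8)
The plan is to reduce the $d$-dimensional statement to a one-dimensional one, and then to derive a contradiction from a covering/counting argument in which the hypothesis $\liminf_n \lambda_{n+1}/\lambda_n>1$ enters through the lacunarity it forces. Suppose, for contradiction, that some $x$ lies in $\bigcap_{a\in\RR^d\setminus\{0\}}HC(T_{\lambda_n a})$. Fix a direction $\omega\neq0$ and set $S_s:=T_{s\omega}$ for $s\in\RR$. Then $(S_s)_{s\in\RR}$ is again a strongly continuous one-parameter operator group, and for every $t>0$ the parameter $t\omega$ is nonzero, so that $\{S_{\lambda_n t}x:\ n\geq1\}=\{T_{\lambda_n t\omega}x:\ n\geq1\}$ must be dense in $X$. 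Thus it suffices to prove that no vector $x$ can satisfy: for every $t>0$, the sampled orbit $\{S_{\lambda_n t}x:\ n\geq1\}$ is dense. In particular the dimension $d$ plays no role; the obstruction is genuinely one-dimensional, which matches the fact that the statement holds for every $d$ and every operator group.

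Two reductions set the stage. First, the continuous curve $s\mapsto S_sx$ must have dense range, for otherwise some ball is never met and then no $t$ is served; moreover, letting $t\to+\infty$ shows that for every ball $B$ the hitting set $\{s>0:\ S_sx\in B\}$ is unbounded. Second, pass to logarithmic coordinates: writing $w(\sigma)=S_{e^\sigma}x$ and $\mu_n=\log\lambda_n$, the multiplicative lacunarity $\liminf_n\lambda_{n+1}/\lambda_n>1$ becomes an additive gap $\liminf_n(\mu_{n+1}-\mu_n)\geq\delta$ for some $\delta>0$, so that $M:=\{\mu_n\}$ is, up to finitely many terms, $\delta'$-separated with $\delta'=\delta/2$. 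Writing $\Sigma(B)=\{\sigma:\ w(\sigma)\in B\}$, density becomes the requirement that $M+\tau$ meet $\Sigma(B)$ for every $\tau\in\RR$ and every ball $B$; equivalently, $\bigcup_n(\Sigma(B)-\mu_n)=\RR$ for every $B$.

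For the core of the argument I would fix two disjoint balls $B_0,B_1$ with $\mathrm{dist}(B_0,B_1)=\rho>0$; the sets $\Sigma(B_0),\Sigma(B_1)$ are disjoint and, by uniform continuity of $w$ on compacta, $\eta$-separated on every bounded window. Since $M$ is $\delta'$-separated, any window of fixed logarithmic length $\log 2$ contains at most $\kappa=\lfloor(\log 2)/\delta'\rfloor+1$ points of $M$, and this bounded multiplicity is exactly what $\liminf>1$ buys us. The goal is to produce a single dilation parameter $t^{*}=e^{\tau^{*}}$ whose entire geometric orbit avoids $B_1$, i.e. $\tau^{*}\notin\bigcup_n(\Sigma(B_1)-\mu_n)$, which destroys density at $t^{*}$. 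I would argue by a dichotomy. If some ball has hitting set of finite logarithmic measure, then a direct estimate controlling $\sum_n|\Sigma(B_1)\cap[\mu_n,\mu_n+\log 2]|$ by $\kappa\int_{\Sigma(B_1)}\,d\sigma$ shows that on a suitable interval the served set has measure strictly less than $\log 2$, hence misses some $\tau^{*}$. If instead every ball is visited on a set of infinite logarithmic measure (the ``equidistributed'' case), I would run the paper's covering machinery in reverse to locate a $\tau^{*}$ falling simultaneously into the gaps of all the translates $\Sigma(B_1)-\mu_n$, again using that these gaps are bounded below.

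The hard part is precisely this equidistributed case. Because $s\mapsto S_sx$ is only continuous and the group need not be equicontinuous, the curve may move arbitrarily fast and there is no modulus of continuity uniform across scales; consequently one cannot bound a priori how many target balls a single window hits, and the naive measure estimate collapses. The entire force of the hypothesis $\liminf_n\lambda_{n+1}/\lambda_n>1$ must be spent here, through the lacunary, bounded-multiplicity structure of $M$, to guarantee that the dilates of the hitting set of a fixed ball leave a common gap into which the bad orbit $\{\lambda_n t^{*}\}$ can be placed. This is exactly the point at which I expect to need the covering argument developed for the positive results, now used to \emph{construct} an obstruction rather than to avoid one.
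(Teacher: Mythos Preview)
Your reduction to a one-parameter group is correct and matches the paper exactly: fix $b\in\RR^d\setminus\{0\}$, set $S_t=T_{tb}$, and work with $\bigcap_{t\in[1,\kappa]}HC(S_{\lambda_n t})$ for some $\kappa>1$. After that point, however, your argument has a genuine gap.

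Your dichotomy on the logarithmic measure of the hitting set does not close. In the ``finite measure'' branch, the estimate $\sum_n|\Sigma(B_1)\cap[\mu_n+a,\mu_n+a+\log 2]|\leq\kappa\,|\Sigma(B_1)|$ only yields an uncovered $\tau^*$ when $|\Sigma(B_1)|<(\log 2)/\kappa$; finite measure alone is not enough, and you give no mechanism to force the measure below this threshold (shrinking the ball need not drive $|\Sigma(B_1)|$ to zero, since the curve $\sigma\mapsto S_{e^\sigma}x$ can revisit any neighbourhood on sets of large measure). In the ``equidistributed'' branch you explicitly defer to an unspecified use of ``the paper's covering machinery in reverse'', and you yourself note that without any equicontinuity the curve may oscillate too fast for a naive gap argument. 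So the hard case is simply not proved.

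The paper's route avoids measure considerations entirely. After reducing to the one-parameter group $(S_t)_{t\in\RR}$, it observes that any such group with a hypercyclic vector is automatically \emph{locally separating}: for every $A>\delta>0$ there is a nonempty open $U$ with $S_tU\cap U=\varnothing$ whenever $\delta\leq|t|\leq A$ (if $S_tx=x$ for some $t\neq0$ the orbit would be compact). This single open set $U$ replaces your two balls $B_0,B_1$ and carries a quantitative gap valid across a whole range of times. From a putative common hypercyclic vector one then extracts (Lemma~\ref{LEMOBSTRUCTION1}) a finite covering of the interval $I$ by balls $B(y_{n,k},\delta/\lambda_n)$ with the dichotomy $|\lambda_n y_{n,k}-\lambda_m y_{m,j}|<\delta$ or $>A$. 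The contradiction is purely combinatorial: using the elementary interval lemma (Corollary~\ref{CORINTERVALS}), one shows by induction that after removing all the covering intervals attached to indices $N,\dots,N+mj$ (with $m$ chosen so that $q^m\geq 2(m+1)$), the residue still contains an interval of length at least $1/\lambda_{N+mj}$; hence the finite family can never exhaust $I$. The lacunarity $\lambda_{n+1}/\lambda_n\geq q>1$ enters twice: to guarantee that at most one $J_{n,k}$ per level meets the surviving interval, and to make the surviving length shrink more slowly than $1/\lambda_n$.

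In short: keep your first paragraph, but abandon the hitting-set dichotomy. The missing idea is the local separation property and the reduction (via compactness of $I$) to a \emph{finite} covering whose combinatorics can be analysed by hand.
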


When we add supplementary conditions, the method of Costakis and Sambarino is unefficient to solve certain problems. This is the case if we consider frequent hypercyclicity, a notion introduced in \cite{BAYGRITAMS}. Recall that for a set $A\subset\NN$, its lower density is defined by 
$$\ldens(A)=\liminf_{N\to +\infty}
\frac{
\#\left\{n\leq N;\ n\in A\right\}}{N},$$
where $\#B$ stands for the cardinal number of $B$. Given a sequence of operators $(T_n)$ of $X$, we say that $x\in X$ is a frequently hypercyclic vector for $(T_n)$ if for any $U\subset X$ open and nonempty, the set 
$\{n;\ T_n x\in U\}$ has positive lower density and we denote by $FHC(T_n)$ the set of frequently hypercyclic vectors for $(T_n)$. As before, for a single operator $T$, $FHC(T)$ will stand for $FHC\big((T^n)\big)$.

It was shown in \cite{BAYGRITAMS} that, for any $a\in\CC\backslash\{0\}$, $\tau_a$ acting on $H(\CC)$ is frequently hypercyclic. Moreover, the algebraic method can be carried on frequent hypercyclicity. In particular, if $(T_t)_{t>0}$ is a strongly continuous semigroup on $X$, then for any $t>0$, $FHC(T_t)=FHC(T_1)$. This implies in particular that we can find a common frequently hypercyclic vector for all operators $(\tau_a)_{a\in\mathbb R\backslash\{0\}}$, a result first obtained in \cite{BAYGRITAMS}. 

The methods introduced in this paper allow us to go further and to prove the following natural result.
\begin{theorem}\label{THMFHCTRANSLATION}
The set $\bigcap_{a\in \mathbb C^*}FHC(\tau_a)$ is nonempty.
\end{theorem}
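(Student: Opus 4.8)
The plan is to adapt the covering philosophy behind Theorem~\ref{THMMAININTRO}, but to upgrade the conclusion from infinitely many return times to return times of positive lower density, uniformly over the parameter. Write $(g_k)$ for a countable dense sequence in $H(\CC)$ and $K_p=\{|z|\le p\}$ for a compact exhaustion. Since $\tau_a^n f(z)=f(z+na)$, the condition $\|\tau_a^n f-g_k\|_{\infty,K_p}<\veps$ is exactly the requirement that $f$ approximate the translated target $z\mapsto g_k(z-na)$ on the disk $K_p+na$ centered at $na$. Thus a single ``task'' $(n,a,k)$ amounts to prescribing $f$ on a disk of fixed radius $p$ centered at $na$, and two tasks $(n,a,k)$, $(m,b,l)$ are compatible as long as the disks $K_p+na$ and $K_p+mb$ are disjoint. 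The first step is therefore to reformulate the theorem: it suffices to produce, for each $k$ and each $a\in\CC^*$, a set of integers $A_k(a)$ with $\ldens\big(A_k(a)\big)>0$, together with a pairwise disjoint family of disks realizing all the tasks $(n,a,k)$, $n\in A_k(a)$, so that $f$ may be defined on the union of these disks with the correct local values.

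Second, I would exhaust $\CC^*$ by annuli $\Lambda_j=\{1/j\le|a|\le j\}$ and solve the covering problem one annulus at a time, diagonalizing at the end. Fix $\Lambda=\Lambda_j$. For $n$ large the set $n\Lambda$ is a thick annulus, and disks $K_p+na$ of fixed radius are disjoint once their centers are separated; this is where the planar geometry helps, exactly as in the introduction. The combinatorial core is to assign to each time $n$ a bounded finite set of parameter cells covering a portion of $\Lambda$ and a target index, so that (i) over all assigned pairs $(n,a)$ the disks remain pairwise disjoint, and (ii) for every fixed $a\in\Lambda$ and every $k$ the set of times at which $a$ is covered by a cell assigned to $g_k$ has positive lower density. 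Here I would invoke the paper's splitting method for sequences tending to infinity not too fast: partitioning $\NN$ into consecutive blocks and, within each block, letting a bounded number of cells sweep across $\Lambda$ while cycling through the targets, one can ensure that each $a$ is revisited, for each target, along a fixed positive proportion of the blocks, which yields positive lower density. The disjointness is maintained because within a block the centers $na$ of admissible cells are forced apart by the expansion factor $n$.

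Third, with the disjoint disks and their prescribed values in hand, I would build $f$ by approximation. On the union of the chosen disks (closed, with connected complement) one applies a Runge/Mergelyan type theorem to realize the prescribed local data by a single entire function; quantitatively, one places at each disk a holomorphic bump equal to $g_k(\cdot-na)$ there and decaying fast enough that, on any other active disk $K_p+mb$, its contribution is dominated. Controlling these tails rests on the classical growth estimates for entire functions used in the Abakumov--Gordon and Bayart--Grivaux constructions, and is routine once the disks are separated. Summing the bumps produces $f$, and for each $a\in\CC^*$ and each $k$ the intended term dominates on $K_p+na$ for every $n\in A_k(a)$, so $\|\tau_a^n f-g_k\|_{\infty,K_p}$ is small along a set of positive lower density; letting $p$ and $k$ vary gives $f\in FHC(\tau_a)$ for all $a$.

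The hard part will be the second step: reconciling the disjointness constraint with the demand of positive lower density \emph{simultaneously} for all the uncountably many $a\in\Lambda$. For plain common hypercyclicity the covering need only meet each $a$ infinitely often, so a sparse schedule suffices; for frequent hypercyclicity a fixed positive fraction of the times must be devoted to each parameter and each target, yet the disks occupied at any given time cannot overlap. Making these two requirements coexist---by keeping the number of cells per block bounded, exploiting the growth of the separation $na$ with $n$, and organizing the cells so that every parameter's density deficit is bounded below---is the crux of the argument, and the place where the splitting lemma must be applied with care.
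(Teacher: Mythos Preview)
Your overall strategy---translate the problem into prescribing values of $f$ on separated disks, arrange the disks so that each parameter is visited along a set of positive lower density, then approximate---is exactly the paper's philosophy. But your execution diverges from the paper in two places, and one of them is a real issue.

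\medskip

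\textbf{Reduction to the sphere versus covering annuli.} The paper does \emph{not} cover annuli $\Lambda_j\subset\CC^*$. Instead it proves the covering lemma only on the unit circle $S^1$ (Lemma~\ref{LEMCOVERINGFHC} with $d=2$), obtains $\bigcap_{a\in S^1}FHC(\tau_{\lambda_n a})\neq\varnothing$, and then passes to all of $\CC^*$ by the algebraic identity $FHC(\tau_{ta})=FHC(\tau_a)$ for $t>0$ (the frequently hypercyclic version of the Conejero--M\"uller--Peris theorem). This buys a whole dimension: on $S^1$ one has $\|x_{n,k}\|=1$, so the separation $\|\lambda_n x_{n,k}-\lambda_m x_{m,\ell}\|\ge|\lambda_n-\lambda_m|\ge|n-m|$ comes for free from $\lambda_{n+1}-\lambda_n\ge 1$, which is precisely what makes the block size $q$ independent of $N$ (compare the remark after Theorem~\ref{THMMAINCOMMON}). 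On a genuine annulus the radial direction destroys this reverse triangle inequality: for $a=r$ and $b=R$ on the same ray, $|na-mb|$ can vanish even when $|n-m|$ is large. Your annulus covering is not impossible, but it needs an extra argument you do not supply, whereas the sphere reduction sidesteps the difficulty entirely.

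\medskip

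\textbf{Building $f$: iterative Runge, not summed bumps.} Your third step proposes to place an entire ``bump'' on each disk and sum. In $H(\CC)$ this does not work: there is no entire function that is close to a prescribed value on one disk and uniformly small on an infinite family of disks marching off to infinity in all directions, with the sum of the tails controlled---an entire function cannot decay that way. This is why the paper uses a different mechanism. It introduces the notion of a \emph{uniformly Runge transitive} group (Definition~\ref{DEFFHCRUNGE}) and builds $f$ as a limit of successive approximants $f_j$ (proof of Theorem~\ref{THMFHCRUNGE}): at each stage one applies Runge on a finite union of disks to correct the new block while perturbing the previous work only on a seminorm of controlled index. The paper notes that for $H(\CC)$ one could alternatively invoke Arakelian's theorem on the (infinite, closed) union of disks in one stroke; either route is fine, but ``Runge/Mergelyan plus summing bumps'' is not, and the growth estimates you allude to from Abakumov--Gordon or Bayart--Grivaux do not rescue it in the Fr\'echet space $H(\CC)$. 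The series construction you have in mind is the content of Theorem~\ref{THMMAINFHC}, which applies to groups satisfying the uniform frequent hypercyclicity criterion---a criterion the translation group on $H(\CC)$ does \emph{not} satisfy.
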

As before, a more general version of Theorem \ref{THMFHCTRANSLATION} will be proved in Section \ref{SECTIONFHC}. In particular, this version can be applied to all the examples introduced in this paper and to high-dimensional families, which is rather surprizing!

\medskip

In the last two sections of this paper, we give related results. First, we study the existence of a common hypercyclic vector for all multiples of operators living in a high-dimensional operator group, leading to a multidimensional generalization of the above result of Shkarin. Second, we emphasize on the algebraic method, showing that it is also helpful to obtain multidimensional results.

\section{The uniform mixing property}
\subsection{The uniform mixing property and a covering argument}\label{SECCOVERING}
In this section, we introduce our main condition for an operator group to admit a common hypercyclic vector. This condition is an enhancement of the mixing property.
\begin{definition}
A group $(T_a)_{a\in\mathbb R^d}$ acting on $X$ has the uniform mixing property if, for any $U,V$ nonempty open subsets of $X$, there exists $C>0$ such that, for any $p\geq 1$, for any $a_1,\dots,a_p\in \mathbb R^d$ with $\|a_i\|\geq C$ and $\|a_i-a_j\|\geq C$ for any $i,j\in\{1,\dots,p\}$ with $i\neq j$, there exists $f\in U$ such that $T_{a_j}f\in V$ for any $j=1,\dots,p$.
\end{definition}
This property is weaker than the Runge property introduced by Shkarin in \cite{Shk10} (see also the forthcoming Section \ref{SECRUNGE}). Moreover, as it is observed in \cite{BoGre07} for a similar property, the Runge property cannot be satisfied for an operator group defined on a Banach space. We shall see later that there exist operator groups defined on Banach spaces and satisfying the uniform mixing property.

Our first result says that the existence of a common hypercyclic vector for an operator group can be deduced from the construction of a suitable covering of $\mathbb R^d$. We shall see later (Theorem \ref{THMCNS}) that the converse is true.
\begin{theorem}\label{THMCOVERING1}
Let $(T_a)_{a\in\mathbb R^d}$ be a strongly continous group on $X$ with the uniform mixing property. Let $K$ be a compact subset of $\mathbb R^d\backslash\{0\}$ and let $(\lambda_n)$ be an increasing sequence of positive real numbers. Assume that, for all $\veps>0$ and all $C>0$, for all $N\in\mathbb N$, we can find $M\geq N$ and a finite number $(x_{n,k})_{N\leq n\leq M,\ 1\leq k\leq p_n}$ of elements of $K$ satisfying
\begin{itemize}
\item[\bf (A)] For any $n,m,k,j$ with $(n,k)\neq (m,j)$, then 
$$\|\lambda_n x_{n,k}-\lambda_m x_{m,j}\|\geq C.$$
\item[\bf (B)] For any $x\in K$, there exist $n,m\in\{N,\dots,M\}$ and $k\in\{1,\dots,p_n\}$ such that 
$$\|\lambda_m x-\lambda_n x_{n,k}\|<\veps.$$
\end{itemize}
Then $\bigcap_{a\in K}HC(T_{\lambda_n a})$ is a residual subset of $X$.
\end{theorem}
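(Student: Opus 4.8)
The plan is to run a Baire category argument, reducing the (uncountable) intersection of $HC$-sets to a countable one. Fix a countable base $(V_j)_{j\ge 1}$ of nonempty open subsets of $X$ (possible since $X$ is separable). For a nonempty open set $V\subseteq X$, set
$$\Omega(V)=\big\{f\in X:\ \forall a\in K,\ \exists n\ge 1\text{ with }T_{\lambda_n a}f\in V\big\}.$$
Unwinding the definition of density of an orbit and swapping the two universal quantifiers gives the identity $\bigcap_{a\in K}HC(T_{\lambda_n a})=\bigcap_{j\ge1}\Omega(V_j)$. Since an $F$-space is a Baire space, it suffices to prove that each $\Omega(V)$ is open and dense; the residuality then follows at once.

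For openness, I would take $f\in\Omega(V)$, choose for each $a\in K$ an index $n(a)$ with $T_{\lambda_{n(a)}a}f\in V$, and use strong continuity of the group to find a neighborhood of $a$ in $\RR^d$ on which the same index still sends $f$ into $V$. Compactness of $K$ reduces this to finitely many indices $n_1,\dots,n_r$ covering $K$, and the joint continuity of $(b,h)\mapsto T_{\lambda_{n_i}b}h$ together with a routine tube/compactness argument produces a neighborhood $W$ of $f$ with $W\subseteq\Omega(V)$.

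The heart of the matter is density: given any nonempty open $U$, I must produce $f\in U\cap\Omega(V)$. First I fix $v_0\in V$ and use continuity of $(c,g)\mapsto T_c g$ at $(0,v_0)$ to select $\veps>0$ and a nonempty open $V'\ni v_0$ with $V'\subseteq V$ such that $T_c g\in V$ whenever $\|c\|<\veps$ and $g\in V'$. Applying the uniform mixing property to the pair $(U,V')$ yields a constant $C>0$. I then invoke the covering hypothesis with this $\veps$, this $C$, and with $N$ taken large enough that $\lambda_N\cdot\min_{x\in K}\|x\|\ge C$, obtaining $M\ge N$ and points $(x_{n,k})$ in $K$ satisfying (A) and (B). Writing $a_{n,k}=\lambda_n x_{n,k}$, condition (A) gives the pairwise separation $\|a_{n,k}-a_{m,j}\|\ge C$, while the lower bound $\|a_{n,k}\|=\lambda_n\|x_{n,k}\|\ge\lambda_N\min_{x\in K}\|x\|\ge C$ supplies the remaining hypothesis of the uniform mixing property. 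Hence there is $f\in U$ with $T_{\lambda_n x_{n,k}}f\in V'$ for every $(n,k)$. Finally, for an arbitrary $a\in K$, condition (B) furnishes indices $n,m,k$ with $\|\lambda_m a-\lambda_n x_{n,k}\|<\veps$, so writing $c=\lambda_m a-\lambda_n x_{n,k}$ and $g=T_{\lambda_n x_{n,k}}f\in V'$ gives $T_{\lambda_m a}f=T_c g\in V$. Thus $f\in\Omega(V)\cap U$, proving density.

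The main obstacle I anticipate is reconciling the covering data with the exact hypotheses of the uniform mixing property: that property demands both a uniform norm bound $\|a_i\|\ge C$ and pairwise separation $\|a_i-a_j\|\ge C$, whereas condition (A) only guarantees the latter. This is why the freedom to choose $N$ (hence to force $\lambda_N$ large, using $K\subseteq\RR^d\setminus\{0\}$ and the unboundedness of $(\lambda_n)$) is essential. The second delicate point is extracting, from mere strong continuity, the local uniformity needed both for the openness argument and for the perturbation step $T_{\lambda_n x_{n,k}}f\in V'\Rightarrow T_{\lambda_m a}f\in V$; here the standard joint continuity of a strongly continuous group action does the job, with $\veps$ and $V'$ chosen precisely to absorb the admissible shift $c$ guaranteed by (B).
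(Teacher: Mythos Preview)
Your argument is correct and follows essentially the same route as the paper: reduce to showing that $U\cap\Omega(V)\neq\varnothing$, choose $\veps$ via continuity of the group action, apply the uniform mixing property to obtain $C$, invoke the covering hypothesis with $N$ large enough to force $\|\lambda_n x_{n,k}\|\ge C$, and then use condition~(B) together with the chosen $\veps$ to land $T_{\lambda_m a}f$ in $V$. The only cosmetic differences are that you spell out the Baire setup and the openness of $\Omega(V)$ (which the paper delegates to \cite[Proposition~7.4]{BM09}), and that your perturbation step uses joint continuity of $(c,g)\mapsto T_cg$ in one stroke, whereas the paper splits $T_{\lambda_m x}f-g$ into two pieces lying in $V'+V'$.
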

\begin{proof}
Let $U,V$ be nonempty open subsets of $X$. It is sufficient to show that
$$U\cap 
\big\{f\in X;\ \forall a\in K,\ \exists n\in\mathbb N,\ T_{\lambda_na}f\in V\big\}$$
is nonempty (see for instance \cite[Proposition 7.4]{BM09}).  
Let $g\in V$ and let $V'$ be a neighbourhood of zero such that $g+V'+V'\subset V$. Since $(T_a)$ is strongly continuous, the uniform boundedness principle says that the map $(a,f)\mapsto T_a f$ is continuous. In particular, there exist $\veps_1>0$ and $W$ a neighbourhood of zero such that $T_a(W)\subset V'$ for any $a$ with $\|a\|<\veps_1$. Moreover, there exists $\veps_2>0$ such that $T_a(g)-g\in V'$ provided $\|a\|<\veps_2$. We set $\veps=\min(\veps_1,\veps_2)$. 

We then set $V_0=g+W$  and we apply the uniform mixing property with $U$ and $V_0$. We get a positive real number $C$ and we choose $N$ such that $\lambda_N \|a\|>C$ for any $a\in K$. For these values of $\veps,C,N$, we get points $(x_{n,k})$ satisfying {\bf (A)} and {\bf (B)}. Applying the uniform mixing property with the sequence $(\lambda_n x_{n,k})$, we know that there exists $f\in U$ such that $T_{\lambda_n x_{n,k}} f\in V_0$ for any admissible choice of $(n,k)$. Pick now $x\in K$. We may find $m,n,k$ such that 
$$\|\lambda_m x-\lambda_n x_{n,k}\|<\veps.$$

Now we write
\begin{eqnarray*}
T_{\lambda_mx}f-g&=&T_{\lambda_mx}f-T_{\lambda_mx-\lambda_n x_{n,k}}g+T_{\lambda_mx-\lambda_n x_{n,k}}g-g\\
&=&\underbrace{T_{\lambda_m x-\lambda_n x_{n,k}}(\underbrace{T_{\lambda_{n,k}x_{n,k}}f-g}_{\in W})}_{\in V'}+\underbrace{T_{\lambda_mx-\lambda_n x_{n,k}}g-g}_{\in V'}.
\end{eqnarray*}
Hence, $T_{\lambda_n x}f$ belongs to $V$, which yields that $\bigcap_{a\in K}HC(T_{\lambda_n a})$ is a residual subset of $X$.
\end{proof}

We shall apply Theorem \ref{THMCOVERING1} under the following form.
\begin{corollary}\label{CORCOVERING2}
Let $(T_a)_{a\in\mathbb R^d}$ be a strongly continous group on $X$ with the uniform mixing property. Let $K$ be a compact subset of $\mathbb R^d\backslash\{0\}$ and let $(\lambda_n)$ be an increasing sequence of positive real numbers. Assume that, for all $\veps>0$ and all $C>0$, there exists $\gamma>0$ such that, for all $N\in\mathbb N$, for all compact subsets $L$ of $K$ with $\diam(L)<\gamma$, we can find $M\geq N$ and a finite number $(x_{n,k})_{N\leq n\leq M,\ 1\leq k\leq p_n}$ of elements of $L$ satisfying
\begin{itemize}
\item[\bf (A)] For any $n,m,k,j$ with $(n,k)\neq (m,j)$, then 
$$\|\lambda_n x_{n,k}-\lambda_m x_{m,j}\|\geq C.$$
\item[\bf (B)] For any $x\in L$, there exist $n,m\in\{N,\dots,M\}$ and $k\in\{1,\dots,p_n\}$ such that 
$$\|\lambda_m x-\lambda_n x_{n,k}\|<\veps.$$
\end{itemize}
Then $\bigcap_{a\in K}HC(T_{\lambda_n a})$ is a residual subset of $X$.
\end{corollary}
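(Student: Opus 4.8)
The plan is to derive from the hypothesis of the Corollary the (formally stronger) hypothesis of Theorem~\ref{THMCOVERING1}, after which the conclusion is immediate. So I fix $\veps>0$, $C>0$ and $N\in\mathbb N$ and aim to produce a single finite family $(x_{n,k})_{N\le n\le M}$ of points of $K$ satisfying {\bf (A)} and {\bf (B)} for all $x\in K$. Since $K$ is compact and avoids $0$, I set $\delta=\min_{x\in K}\|x\|>0$ and $R=\max_{x\in K}\|x\|<+\infty$, and I assume throughout that $\lambda_n\to+\infty$ (the only relevant case, and exactly what will make the separation step below possible). Let $\gamma>0$ be the radius furnished by the hypothesis of the Corollary for these $\veps,C$. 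Covering $\RR^d$ by a grid of closed cubes of diameter $<\gamma$ and intersecting with $K$, I obtain finitely many compact sets $L_1,\dots,L_r\subseteq K$, each of diameter $<\gamma$, with $\bigcup_{i=1}^r L_i=K$.

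Next I would treat the pieces $L_1,\dots,L_r$ one at a time, arranging that the blocks of indices devoted to different pieces use scales that are widely separated. Put $N_1=N$ and apply the hypothesis to $L_1$ with parameters $\veps,C,N_1$, obtaining $M_1\ge N_1$ and points of $L_1$ indexed by $\{N_1,\dots,M_1\}$ that satisfy {\bf (A)} and cover $L_1$ in the sense of {\bf (B)}. Having processed $L_i$ up to index $M_i$, I choose $N_{i+1}>M_i$ so large that $\lambda_{N_{i+1}}\delta\ge\lambda_{M_i}R+C$, which is where $\lambda_n\to+\infty$ is used, and apply the hypothesis to $L_{i+1}$ with parameters $\veps,C,N_{i+1}$. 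After $r$ steps I set $M=M_r$ and collect all the points; their indices lie in $\{N,\dots,M\}$, with no point attached to the indices falling in the gaps between consecutive blocks (which is harmless, as the statement allows $p_n=0$).

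It then remains to check {\bf (A)} and {\bf (B)} for the assembled family, and here {\bf (B)} is free: every $x\in K$ lies in some $L_i$, and the covering achieved at step $i$ provides indices $n,m\in\{N_i,\dots,M_i\}\subseteq\{N,\dots,M\}$ and a $k$ with $\|\lambda_m x-\lambda_n x_{n,k}\|<\veps$. I expect the real obstacle to be {\bf (A)} for points coming from two different pieces (within one piece it is inherited from the hypothesis). Given $\lambda_n x_{n,k}$ from $L_{i'}$ and $\lambda_m x_{m,j}$ from $L_i$ with $i<i'$, one has $n\ge N_{i'}$ and $m\le M_i$, while monotonicity of $(\lambda_n)$ together with the choice of the starting indices gives $\lambda_{N_{i'}}\delta\ge\lambda_{M_i}R+C$; the reverse triangle inequality then yields
$$\|\lambda_n x_{n,k}-\lambda_m x_{m,j}\|\ge\lambda_n\|x_{n,k}\|-\lambda_m\|x_{m,j}\|\ge\lambda_{N_{i'}}\delta-\lambda_{M_i}R\ge C.$$
This establishes {\bf (A)} globally, so the hypothesis of Theorem~\ref{THMCOVERING1} holds for the arbitrary data $\veps,C,N$, and that theorem gives at once that $\bigcap_{a\in K}HC(T_{\lambda_n a})$ is residual. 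The entire weight of the argument thus rests on the scale-domination device of the second paragraph: forcing the smallest scale $\lambda_{N_{i'}}\delta$ used on a later piece to exceed the largest scale $\lambda_{M_i}R$ used on an earlier one is exactly what converts the local coverings into a globally $C$-separated one.
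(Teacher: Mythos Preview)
Your argument is correct and is essentially the paper's own proof: decompose $K$ into finitely many pieces $L_i$ of diameter $<\gamma$, run the hypothesis on each piece in turn with starting indices $N_{i+1}$ chosen so that $\lambda_{N_{i+1}}\min_{a\in K}\|a\|\ge \lambda_{M_i}\max_{b\in K}\|b\|+C$, and invoke Theorem~\ref{THMCOVERING1}. Your explicit use of the reverse triangle inequality for the cross-block separation and your remark that $\lambda_n\to+\infty$ is implicitly required (the paper uses it without comment) make the argument slightly more detailed, but the strategy is identical.
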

\begin{proof}
Let $\veps,C>0$ and $N\in\mathbb N$. We get some $\gamma>0$ and we write $K$ as a finite union of compact subsets $L_1,\dots,L_q$ with $\diam(L_i)<\gamma$. We apply iteratively the construction for each $L_i$ with $N_i$ defined as follows:
\begin{itemize}
\item $N_1=N$;
\item if at Step $i$, we have constructed elements $(x_{n,k})$ until $n\leq M_i$, we take $N_{i+1}>M_i$ any positive integer $n$ such that $\lambda_n \|a\|>\lambda_{M_i}\|b\|+C$ for any $a,b\in K$. This ensures that the separation property {\bf (A)} keeps beings true on the whole set of points $(x_{n,k})$.
\end{itemize}
Hence, the whole sequence $(x_{n,k})$, $n=N_i,\dots,M_i$, $i=1,\dots,q$, satisfies the assumptions {\bf (A)}
and {\bf (B)} of Theorem \ref{THMCOVERING1} that we may apply.
\end{proof}

\subsection{An efficient way to split sequences of positive real numbers}
We need now to introduce a condition on positive sequences of real numbers $(\lambda_n)$ in order to ensure common hypercyclicity of the family $(T_{\lambda_n a})$.  For the translation group on $H(\mathbb C)$, N. Tsirivas has introduced in \cite{Tsi15a} a sufficient condition: it suffices that, for any $M>0$, there exists a subsequence $(\mu_n)$ of $(\lambda_n)$ such that $\mu_{n+1}-\mu_n\geq M$ for any $n$ and $\sum_{n\geq 1}\frac 1{\mu_n}=+\infty$. This last condition was not very surprizing, because it was the main property on the whole sequence of integers $(n)$ which was used in the Costakis-Sambarino criterion. For our purpose, we will weaken this condition in order to allow sequences with faster growth.

\begin{definition}\label{DEFPROPERTYSG}
We say that an increasing sequence $(\lambda_n)$ has property (SG) if, for any $B>0$, there exist $\rho> 1$ and a subsequence $(\mu_n)$ of $(\lambda_n)$ such that
\begin{itemize}
\item $\mu_{n+1}\geq \rho \mu_n$;
\item for any $n_0\in \NN$, $\sum_{n=n_0+1}^{+\infty}\frac{1}{\mu_n}>\frac{B}{\mu_{n_0}}.$
\end{itemize} 
\end{definition}

We first show that many classical sequences have property (SG). 
\begin{proposition}
Let $(\lambda_n)$ be an increasing sequence of positive real numbers tending to $+\infty$ such that $\lambda_{n+1}/\lambda_n\to 1$. Then $(\lambda_n)$ has property (SG).
\end{proposition}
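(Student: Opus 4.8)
The plan is to fix $B>0$ and to manufacture the subsequence by a greedy ``almost geometric'' extraction, using the hypothesis $\lambda_{n+1}/\lambda_n\to 1$ precisely to keep each jump from overshooting its target ratio. First I would fix the two constants: choose a ratio $\rho$ with $1<\rho<1+\frac1B$, and then a small $\delta>0$ for which $\rho(1+\delta)<1+\frac1B$ still holds, which is possible because the inequality $\rho<1+\frac1B$ is strict. Since $\lambda_{n+1}/\lambda_n\to 1$, there is an index $n^*$ such that $\lambda_{n+1}/\lambda_n<1+\delta$ for every $n\geq n^*$.

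Next I would build $(\mu_n)$ greedily. Set $\mu_1=\lambda_{n_1}$ with $n_1=n^*$, and given $\mu_k=\lambda_{n_k}$ let $n_{k+1}$ be the smallest index $n>n_k$ with $\lambda_n\geq\rho\lambda_{n_k}$; such an index exists because $\lambda_n\to+\infty$. Put $\mu_{k+1}=\lambda_{n_{k+1}}$. By construction $\mu_{k+1}\geq\rho\mu_k$, which is the first requirement of property (SG). For the upper control, minimality of $n_{k+1}$ gives $\lambda_{n_{k+1}-1}<\rho\mu_k$, and since $n_{k+1}-1\geq n_k\geq n^*$ we have $\lambda_{n_{k+1}}/\lambda_{n_{k+1}-1}<1+\delta$; multiplying the two bounds yields
$$\rho\mu_k\leq\mu_{k+1}<\rho(1+\delta)\mu_k.$$

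It then remains to verify the tail-sum condition. Writing $q=\rho(1+\delta)$, iterating the upper bound gives $\mu_{n_0+j}<q^{j}\mu_{n_0}$ for every $j\geq 1$, hence
$$\sum_{n=n_0+1}^{+\infty}\frac1{\mu_n}=\sum_{j\geq 1}\frac1{\mu_{n_0+j}}>\frac1{\mu_{n_0}}\sum_{j\geq 1}q^{-j}=\frac1{\mu_{n_0}}\cdot\frac1{q-1}.$$
Because $q=\rho(1+\delta)<1+\frac1B$, we have $q-1<\frac1B$, so $\frac1{q-1}>B$ and the desired inequality $\sum_{n>n_0}1/\mu_n>B/\mu_{n_0}$ holds for every $n_0$, completing the proof.

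The conceptual heart, and the only place where the hypothesis enters, is the upper bound $\mu_{k+1}<\rho(1+\delta)\mu_k$: the slow growth $\lambda_{n+1}/\lambda_n\to 1$ is exactly what guarantees that a greedy jump past the threshold $\rho\mu_k$ overshoots by no more than a factor $1+\delta$, so the extracted subsequence is squeezed between two genuine geometric rates $\rho$ and $\rho(1+\delta)$. Everything else is bookkeeping with the two constants, whose sole constraint $\rho(1+\delta)<1+\frac1B$ is arranged at the very start; I expect no serious obstacle beyond getting this constant budget right.
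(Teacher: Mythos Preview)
Your proof is correct and is essentially the same as the paper's: both perform a greedy extraction that jumps past the threshold $\rho\mu_k$ and use $\lambda_{n+1}/\lambda_n\to 1$ to bound the overshoot, obtaining a two-sided geometric control $\rho\mu_k\le\mu_{k+1}\le q\mu_k$ with $q-1<1/B$. The only cosmetic difference is the bookkeeping of constants---the paper takes $q=\rho_0$ first and then sets $\rho=\sqrt{\rho_0}$ (so the overshoot factor is $\rho$ itself), whereas you separate the overshoot into an independent factor $1+\delta$.
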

\begin{proof}
Let $B>0$. There exists $\rho_0>1$ such that $\sum_{n=1}^{+\infty}\rho_0^{-n}>B$. We set $\rho=\sqrt{\rho_0}$. Let $p\geq 1$ be such that $\frac{\lambda_{n+1}}{\lambda_n}\leq\rho$ provided $n\geq p$. We then set $\psi(0)=p$ and we define $\psi(n)$ for $n\geq 1$ by induction using the following formula:
$$\psi(n+1)=\inf\big\{m\geq \psi(n);\ \lambda_m\geq\rho\lambda_{\psi(n)}\big\}.$$
Then $\lambda_{\psi(n+1)}\geq \rho\lambda_{\psi(n)}$ and $\lambda_{\psi(n+1)}\leq \rho\lambda_{\psi(n+1)-1}\leq \rho^2\lambda_{\psi(n)}=\rho_0\lambda_{\psi(n)}.$
Setting $\mu_n=\lambda_{\psi(n)}$, we immediately get the conclusion.
\end{proof}

To be able to produce coverings in arbitrary large dimensions, we will need to be able to iterate the property $\sum_{n=n_0+1}^{+\infty}\frac{1}{\mu_n}>\frac{B}{\mu_{n_0}}$ arbitrary many times. The precise statement that we need is contained in the following technical lemma.
\begin{lemma}\label{LEMTECHNICALSEQ}
Let $d\geq 1$ and $A>0$. There exists $B:=B(d,A)>0$ such that, if $(\mu_n)$ is an increasing sequence of positive real numbers such that, for any $n_0\in\mathbb N$, $\sum_{n=n_0+1}^{+\infty}\frac{1}{\mu_n}\geq \frac{B}{\mu_{n_0}}$, if $s\geq 1$ is such that $\sum_{n=1}^s \frac{1}{\mu_n}\geq\frac B{\mu_0}$, then we can find $s_1\in\mathbb N$, subsets $E_r$ of $\NN^{r-1}$ for $r=2,\dots,d+1$, maps $s_r:E_r\to\mathbb N$ for $r=2,\dots,d$ and a one-to-one map $\phi:E_{d+1}\to \{0,\dots,s\}$ such that
\begin{itemize}
\item for any $r=2,\dots,d+1$, 
$$E_r=\big\{(k_1,\dots,k_{r-1});\ k_1<s_1,\ k_2<s_2(k_1),\dots,k_{r-1}\leq s_{r-1}(k_1,\dots,k_{r-2})\big\}.$$
\item for any $r=1,\dots,d$, for any $(k_1,\dots,k_{r-1})\in E_r$,
$$\sum_{j=1}^{s_r(k_1,\dots,k_{r-1})}\frac 1{\mu_{\phi(k_1,\dots,k_{r-1},j,0,\dots,0)}}\geq \frac A{\mu_{\phi(k_1,\dots,k_{r-1},0,\dots,0)}}.$$
\item If $(k_1,\dots,k_d)> (k'_1,\dots,k'_d)$ in the lexicographical order, then
$$\phi(k_1,\dots,k_d)> \phi(k'_1,\dots,k'_d).$$
\end{itemize}
\end{lemma}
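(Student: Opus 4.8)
The plan is to prove the statement by induction on $d$, building a $d$-level tree of indices whose layout in lexicographic order furnishes the sets $E_r$, the counting maps $s_r$, and the embedding $\phi$. Throughout, the one structural device is the hypothesis in its ``tail'' form, $\sum_{i>m}1/\mu_i\ge B/\mu_m$ for every $m$. I would first detach the construction from the constraint that $\phi$ takes values in $\{0,\dots,s\}$: using only the infinite tail bound I build a tree over all of $\NN$, and verify a posteriori that it fits inside $\{0,\dots,s\}$. To make the induction go through I would carry, alongside the required lower bounds (the displayed sum conditions), an \emph{upper} bound on the mass a subtree consumes: a depth-$d$ subtree rooted at $m$ should occupy a block of consecutive indices of total reciprocal mass $\sum 1/\mu_i\le G(d,A)/\mu_m$. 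These two invariants, a lower bound on the children and an upper bound on the whole block, are what drive everything.

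For the base case $d=1$ I would take the children to be the consecutive indices $m+1,\dots,m+s_1$ with $s_1$ minimal so that $\sum_{j=1}^{s_1}1/\mu_{m+j}\ge A/\mu_m$; minimality together with monotonicity of $(\mu_n)$ gives the consumption bound with $G(1,A)=A+2$, and feasibility only needs the tail constant to exceed $A$. For the inductive step I would place, consecutively and in increasing order, depth-$(d-1)$ subtrees: first the subtree of the root itself (handling coordinates $2,\dots,d$), then subtrees rooted at $n_1<n_2<\cdots$; the roots $n_1,n_2,\dots$ are exactly the level-$1$ children, while the sum conditions at levels $2,\dots,d$ are supplied verbatim by the inductive hypothesis applied inside each block (the counts $s_r$ depend on the block because different blocks terminate at different times). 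I would stop at the first $s_1$ with $\sum_{j=1}^{s_1}1/\mu_{n_j}\ge A/\mu_m$, which is the level-$1$ condition. Since $\phi$ is built by assigning indices in increasing order following the lexicographic enumeration of the tuples, both monotonicity and injectivity of $\phi$ come for free.

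The heart of the matter, and the step I expect to be the main obstacle, is the feasibility of that stopping rule: the level-$1$ children are \emph{sparse}, since consecutive roots $n_j,n_{j+1}$ are separated by an entire depth-$(d-1)$ subtree, so it is not obvious that their reciprocals can still accumulate to $A/\mu_m$. This is exactly where the upper consumption bound is used: each subtree consumes at most $G(d-1,A)/\mu_{n_j}$, so its root carries at least the fraction $1/G(d-1,A)$ of the reciprocal mass of its block. As the blocks tile $[n_1,+\infty)$, were the stopping value never reached the roots would accumulate at least $\frac1{G(d-1,A)}\sum_{i\ge n_1}1/\mu_i$, and the tail bound gives $\sum_{i\ge n_1}1/\mu_i\ge (B-G(d-1,A))/\mu_m$ after subtracting the root's own block; choosing $B\ge (A+2)\,G(d-1,A)$ forces this past $A/\mu_m$, a contradiction. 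The same minimality argument then yields $G(d,A)=(A+2)\,G(d-1,A)$, hence $G(d,A)=(A+2)^d$, and one may take $B(d,A)=(A+2)^d$.

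Finally I would close the loop with the $\{0,\dots,s\}$ constraint. The tree rooted at $0$ was built from the infinite tail alone, and its total reciprocal mass is at most $G(d,A)/\mu_0=(A+2)^d/\mu_0$, whereas the hypothesis on $s$ reads $\sum_{n=1}^s 1/\mu_n\ge B(d,A)/\mu_0=(A+2)^d/\mu_0$. Since the partial sums $\sum_{i=0}^k 1/\mu_i$ are strictly increasing in $k$, the last index used by the tree cannot exceed $s$; thus $\phi$ indeed maps into $\{0,\dots,s\}$, which completes the induction.
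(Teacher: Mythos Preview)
Your argument is correct and follows essentially the same induction as the paper's proof: build the depth-$d$ tree by laying down consecutive depth-$(d-1)$ blocks, use a two-sided mass estimate on each block (lower bound $\ge B(d-1,A)/\mu_{\text{root}}$ to apply the inductive hypothesis, upper bound to control consumption), and recover the level-$1$ sum condition from the ratio of these bounds. The paper defines $\phi(j+1,0,\dots,0)$ as the least $N$ with $\sum_{k>\phi(j,0,\dots,0)}^{N}1/\mu_k\ge B(d-1,A)/\mu_{\phi(j,0,\dots,0)}$, stops when the \emph{total} block mass reaches a threshold, and then deduces $\sum_j 1/\mu_{\phi(j,0,\dots,0)}\ge A/\mu_0$; you instead carry an explicit consumption invariant $G(d,A)$, use the level-$1$ sum itself as the stopping rule, and defer the range constraint $\phi\le s$ to a single final comparison of masses. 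These are cosmetic rearrangements of the same mechanism, yielding $B(d,A)=(A+2)^d$ in place of the paper's $B(d,A)=(A+2)B(d-1,A)+3$.
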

\begin{proof}
We proceed by induction on $d$. The result is clear for $d=1$, setting simply $B=A$, $s_1=s$ and $\phi(k)=k$. 
Assume now that the result until step $d-1$ is known and let us prove it for step $d$. Let $A>0$ and let us set
$$B:=B(d,A)=(A+2)B(d-1,A)+3.$$
Let $(\mu_n)$ be an increasing sequence of positive integers and $s\geq 1$ be such that
$$\sum_{n=1}^s \frac 1{\mu_n}\geq\frac B{\mu_0}\textrm{ and }\sum_{n=n_0+1}^{+\infty}\frac 1{\mu_n}\geq\frac B{\mu_{n_0}}\textrm{ for any }n_0\in\NN.$$
We set $\phi(0,\dots,0)=0$ and we define by induction $\phi(j+1,0,\dots,0)$ as the smallest integer $N$ such that
$$\sum_{k=\phi(j,0,\dots,0)+1}^{N}\frac 1{\mu_k}\geq \frac{B(d-1,A)}{\mu_{\phi(j,0,\dots,0)}}.$$
In particular,
\begin{eqnarray}\label{EQTECHNICAL1}
\frac{B(d-1,A)}{\mu_{\phi(j,0,\dots,0)}}\leq \sum_{k=\phi(j,0,\dots,0)+1}^{\phi(j+1,0,\dots,0)}\frac 1{\mu_k}\leq \frac{B(d-1,A)+1}{\mu_{\phi(j,0,\dots,0)}}.
\end{eqnarray}
We stop when $j+1=s_1$ where $s_1$ is the smallest integer $t$ such that
$$\sum_{k=\phi(0,\dots,0)+1}^{\phi(t,0,\dots,0)}\frac 1{\mu_k}\geq\frac{(A+1)B(d-1,A)+2}{\mu_{\phi(0,\dots,0)}}.$$
In particular, 
\begin{eqnarray*}
\sum_{k=\phi(0,\dots,0)+1}^{\phi(s_1,0,\dots,0)}\frac1{\mu_k}&\leq&\frac{(A+1)B(d-1,A)+2}{\mu_{\phi(0,\dots,0)}}+\sum_{k=\phi(s_1-1,0,\dots,0)+1}^{\phi(s_1,0,\dots,0)}\frac1{\mu_k}\\
&\leq&\frac{(A+2)B(d-1,A)+3}{\mu_{\phi(0,\dots,0)}}.
\end{eqnarray*}
We claim that $\phi(s_1,0,\dots,0)\leq s$. Indeed,
$$\sum_{k=\phi(0,\dots,0)+1}^{\phi(s_1,0,\dots,0)}\frac{1}{\mu_k}\leq \frac{(A+2)B(d-1,A)+3}{\mu_{\phi(0,\dots,0)}}\leq\frac{B(d,A)}{\mu_{\phi(0,\dots,0)}}\leq\sum_{k=1}^s \frac{1}{\mu_k}.$$
We now apply the induction hypothesis for $j=0,\dots,s_1-1$ by using the left part of (\ref{EQTECHNICAL1}). We get maps $\phi_j$, $s_{r,j}$. We just set $\phi(j,k_1,\dots,k_d)=\phi_j(k_2,\dots,k_d)$, $s_r(j,k_2,\dots,k_{r-1})=s_{r-1,j}(k_2,\dots,k_{r-1})$. The only thing which remains to be done is to show that
$$\sum_{j=1}^{s_1}\frac 1{\mu_{\phi(j,0,\dots,0)}}\geq \frac{A}{\mu_{\phi(0,\dots,0)}}.$$
This can be done by observing that 
\begin{eqnarray*}
\sum_{j=1}^{s_1}\frac 1{\mu_{\phi(j,0,\dots,0)}}&\geq& \frac 1{B(d-1,A)+1}\sum_{j=1}^{s_1-1}\sum_{k=\phi(j,0,\dots,0)+1}^{\phi(j+1,0,\dots,0)}\frac 1{\mu_k}\\
&\geq& \frac 1{B(d-1,A)+1}\sum_{k=\phi(1,0,\dots,0)+1}^{\phi(s_1,0,\dots,0)}\frac 1{\mu_k}\\
&\geq& \frac 1{B(d-1,A)+1}\left(\sum_{k=1}^{\phi(s_1,0,\dots,0)}\frac 1{\mu_k}-\sum_{k=1}^{\phi(1,0,\dots,0)}\frac 1{\mu_k}\right)\\
&\geq& \frac 1{B(d-1,A)+1}\left(\frac{(A+1)B(d-1,A)+2-B(d-1,A)-1}{\mu_{\phi(0,\dots,0)}}\right)\\
&\geq&\frac{A}{\mu_{\phi(0,\dots,0)}}.
\end{eqnarray*}
\end{proof}

Lemma \ref{LEMTECHNICALSEQ} can be applied for sequences $(\lambda_n)$ having property (SG).
\begin{corollary}\label{CORTECHNICALSEQ}
Let $(\lambda_n)$ be a sequence having property (SG). Then for all $d\geq 1$ and all $A>0$, there exist $\rho>1$ and a subsequence $(\mu_n)$ of $(\lambda_n)$ such that $\mu_{n+1}\geq \rho\mu_n$ for any $n\geq 1$ and, for all $P>0$,  we can find $s_1\in\mathbb N$, subsets $E_r$ of $\NN^{r-1}$ for $r=2,\dots,d+1$, maps $s_r:E_r\to\mathbb N$ for $r=2,\dots,d$ and a one-to-one map $\phi:E_{d+1}\to \mathbb N$ such that
\begin{itemize}
\item for any $r=2,\dots,d+1$, 
$$E_r=\big\{(k_1,\dots,k_{r-1});\ k_1<s_1,\ k_2<s_2(k_1),\dots,k_{r-1}\leq s_{r-1}(k_1,\dots,k_{r-2})\big\}.$$
\item for any $r=1,\dots,d$, for any $(k_1,\dots,k_{r-1})\in E_r$,
$$\sum_{j=1}^{s_r(k_1,\dots,k_{r-1})}\frac 1{\mu_{\phi(k_1,\dots,k_{r-1},j,0,\dots,0)}}\geq \frac A{\mu_{\phi(k_1,\dots,k_{r-1},0,\dots,0)}}.$$
\item $\phi(0,\dots,0)\geq P$.
\item If $(k_1,\dots,k_d)> (k'_1,\dots,k'_d)$ in the lexicographical order, then
$$\phi(k_1,\dots,k_d)> \phi(k'_1,\dots,k'_d).$$
\end{itemize}
\end{corollary}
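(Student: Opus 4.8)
The plan is to obtain the corollary as an almost immediate consequence of Lemma~\ref{LEMTECHNICALSEQ}, the only genuinely new ingredients being the right choice of the constant $B$ supplied by property (SG) and an index shift that forces $\phi(0,\dots,0)$ to be large. First I would fix $d\geq 1$ and $A>0$ and set $B:=B(d,A)$, the constant produced by Lemma~\ref{LEMTECHNICALSEQ}. Applying property (SG) (Definition~\ref{DEFPROPERTYSG}) to this particular $B$ yields a number $\rho>1$ and a subsequence $(\mu_n)$ of $(\lambda_n)$ with $\mu_{n+1}\geq\rho\mu_n$ and $\sum_{n=n_0+1}^{+\infty}\frac1{\mu_n}>\frac{B}{\mu_{n_0}}$ for every $n_0$. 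These $\rho$ and $(\mu_n)$ are chosen once and for all and do not depend on $P$, which matches exactly the order of quantifiers demanded by the statement.

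Now fix $P>0$. To force $\phi(0,\dots,0)\geq P$, I would pick an integer $n_*\geq P$ and pass to the shifted sequence $\mu'_n:=\mu_{n_*+n}$ for $n\geq 0$. The hypotheses of Lemma~\ref{LEMTECHNICALSEQ} transfer verbatim to $(\mu'_n)$: reindexing gives $\sum_{n=n_0+1}^{+\infty}\frac1{\mu'_n}=\sum_{m=n_*+n_0+1}^{+\infty}\frac1{\mu_m}>\frac{B}{\mu_{n_*+n_0}}=\frac{B}{\mu'_{n_0}}$ for every $n_0$, and likewise $\mu'_{n+1}\geq\rho\mu'_n$. Taking $n_0=0$ shows in particular that the full series $\sum_{n\geq 1}\frac1{\mu'_n}$ strictly exceeds $\frac{B}{\mu'_0}$, so its increasing partial sums reach $\frac{B}{\mu'_0}$ after finitely many steps: there is a finite $s\geq 1$ with $\sum_{n=1}^{s}\frac1{\mu'_n}\geq\frac{B}{\mu'_0}$. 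Thus $(\mu'_n)$ together with $s$ satisfies all the assumptions of the lemma.

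Applying Lemma~\ref{LEMTECHNICALSEQ} to $(\mu'_n)$ and $s$ produces $s_1$, the sets $E_r$, the maps $s_r$ and a one-to-one map $\phi'\colon E_{d+1}\to\{0,\dots,s\}$ enjoying the three listed properties (written for $\mu'$). To finish I would simply undo the shift, keeping $s_1,E_r,s_r$ unchanged and setting $\phi:=n_*+\phi'$. Then $\phi$ is one-to-one into $\NN$; it still preserves the lexicographical order because translation by $n_*$ is strictly increasing; and $\phi(0,\dots,0)=n_*+\phi'(0,\dots,0)=n_*\geq P$, using that the construction in the lemma forces $\phi'(0,\dots,0)=0$. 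Finally, since $\mu_{\phi(k)}=\mu_{n_*+\phi'(k)}=\mu'_{\phi'(k)}$, the summation inequalities for $\mu$ are literally those already established for $\mu'$, so the middle condition carries over unchanged.

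I do not anticipate a serious obstacle: all the real combinatorial work lives inside Lemma~\ref{LEMTECHNICALSEQ}, and the corollary is essentially a repackaging. The only points requiring a little care are the matching choice $B=B(d,A)$ so that (SG) feeds the lemma exactly the constant it needs, the verification that the shifted tail sums retain the correct lower bound, and the finiteness of $s$, which follows because the strict inequality in (SG) is preserved by the shift, so the positive increasing partial sums of $\sum 1/\mu'_n$ must reach $B/\mu'_0$ after finitely many terms.
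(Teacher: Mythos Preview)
Your proposal is correct and is exactly the argument the paper has in mind: the corollary is stated without proof, introduced by the sentence ``Lemma~\ref{LEMTECHNICALSEQ} can be applied for sequences $(\lambda_n)$ having property (SG)'', and your write-up spells out precisely this application together with the obvious index shift to ensure $\phi(0,\dots,0)\geq P$. One small remark: you do not actually need to appeal to the internal fact that $\phi'(0,\dots,0)=0$; since $\phi'$ takes values in $\{0,\dots,s\}$ you already have $\phi(0,\dots,0)=n_*+\phi'(0,\dots,0)\geq n_*\geq P$.
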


\subsection{Common hypercyclic vectors}
We are now ready to state and to prove the main result of this section. 
\begin{theorem}\label{THMMAINCOMMON}
Let $(T_a)_{a\in\mathbb R^d}$ be a strongly continuous operator group on $X$ which is uniformly mixing and let $(\lambda_n)$ be an increasing sequence of positive real numbers having property (SG). Then $\bigcap_{a\in\mathbb R^d\backslash\{0\}}HC(T_{\lambda_n a})$ is a residual subset of $X$. 
\end{theorem}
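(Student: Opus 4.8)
The plan is to reduce the full result to the covering condition of Corollary \ref{CORCOVERING2} and to manufacture the required covering of an arbitrary small piece $L$ of the parameter set using the combinatorial structure produced by Corollary \ref{CORTECHNICALSEQ}. First I would observe that $\bigcap_{a\in\mathbb R^d\backslash\{0\}}HC(T_{\lambda_n a})$ is the intersection, over a countable exhausting family of compact sets $K\subset\mathbb R^d\backslash\{0\}$, of the sets $\bigcap_{a\in K}HC(T_{\lambda_n a})$; by the Baire category theorem it suffices to prove that each such set is residual. So I fix one compact $K\subset\mathbb R^d\backslash\{0\}$, and I aim to verify the hypotheses of Corollary \ref{CORCOVERING2}: given $\veps>0$ and $C>0$, I must produce $\gamma>0$ such that every compact $L\subset K$ with $\diam(L)<\gamma$ admits, for every starting index $N$, a finite family $(x_{n,k})$ of points of $L$ satisfying the separation condition \textbf{(A)} and the covering condition \textbf{(B)}.

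The geometric idea is that, after passing to the subsequence $(\mu_n)$ given by property (SG), the dilated points $\mu_n x$, as $x$ ranges over the small set $L$ and $n$ ranges over a window, sweep out thin spherical shells, and the gaps $\mu_{n+1}-\mu_n\ge(\rho-1)\mu_n$ grow geometrically; the job is to drop sample points $x_{n,k}$ into $L$ so that the corresponding dilates $\mu_n x_{n,k}$ form a $C$-separated net whose $\veps$-neighborhoods (under the appropriate dilate $\mu_m x$) cover all of $L$. Here is where I expect the $d$-dimensional tree structure of Corollary \ref{CORTECHNICALSEQ} to do the real work. I would apply that corollary with $A$ chosen large enough to guarantee that at each level $r$ the ``budget'' $\sum_j \mu_{\phi(\dots,j,0,\dots,0)}^{-1}\ge A\,\mu_{\phi(\dots,0,\dots,0)}^{-1}$ translates, after multiplying by a radius $\sim\|x\|$, into a collection of shells whose radii $\{\mu_{\phi(k_1,\dots,k_{r-1},j,0,\dots,0)}\|x\|\}_j$ are spaced by at least $C$ (this is the separation \textbf{(A)}) yet whose total radial extent exceeds the diameter of $\mu_m L$ in the $r$-th coordinate direction, so that nothing is missed (this feeds the covering \textbf{(B)}). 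Iterating the construction once per coordinate $r=1,\dots,d$ via the nested sets $E_r$ and the maps $s_r$ lets me cover $L$ one dimension at a time, each new coordinate being handled inside the window opened by the previous one.

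Concretely, the argument I have in mind fixes $\gamma$ small enough that on $L$ the ``angular'' variation is negligible compared with $\veps$, so that the only thing left to cover is the radial scale in each direction; then the points $x_{n,k}$ are placed by peeling $L$ along nested shells indexed by $\phi(k_1,\dots,k_d)$, the lexicographic monotonicity of $\phi$ ensuring that the windows $[N_i,M_i]$ for successive coordinates are disjoint and increasing, exactly as the separation step in the proof of Corollary \ref{CORCOVERING2} requires. The separation \textbf{(A)} holds because two distinct dilates $\mu_n x_{n,k}$ and $\mu_m x_{m,j}$ differ either in scale (by a factor $\ge\rho$, hence by at least $C$ once $\mu_N$ is large) or, at equal scale, by a genuinely different sample point; the covering \textbf{(B)} holds because the budget inequalities guarantee that the union of shells leaves no radial gap larger than $\veps$.

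The main obstacle, and the crux of the whole proof, is the second bullet of Corollary \ref{CORTECHNICALSEQ}: converting the one-dimensional divergence estimate $\sum_j \mu_{\phi(\dots,j,\dots)}^{-1}\ge A\,\mu_{\phi(\dots,0,\dots)}^{-1}$ into a genuine $d$-dimensional covering of the shell $\mu_m L$ without overlap failures. The delicate point is that the separation constant $C$ and the covering tolerance $\veps$ must be reconciled across all $d$ levels simultaneously: enforcing $C$-separation forces the shells apart, while demanding an $\veps$-cover forces them close, and these are compatible only because the geometric growth $\mu_{n+1}\ge\rho\mu_n$ makes a single shell of radius $\mu_n\|x\|$ thick enough (of radial width $\gtrsim(\rho-1)\mu_n\|x\|\gg C$) to both respect the separation and, via the large summed budget $A$, accumulate enough shells to bridge the full radial span of $\mu_m L$ to within $\veps$. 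Making this quantitative — choosing $A=A(\veps,C,\diam K,d)$ and $\gamma$ in the right order so that the tree construction of the technical lemma exactly matches the covering demands of Theorem \ref{THMCOVERING1} — is where the careful bookkeeping lives, but conceptually it is just the statement that a sequence with property (SG) is ``slow enough'' in every one of the $d$ radial directions at once.
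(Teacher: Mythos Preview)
Your skeleton is right---reduce to compact $K$, verify the hypotheses of Corollary~\ref{CORCOVERING2}, and feed the tree structure of Corollary~\ref{CORTECHNICALSEQ} into the construction of the $(x_{n,k})$---and this is exactly the route the paper takes. But the geometric picture you describe (spherical shells, radial scale, angular variation) is not the one that makes the argument go through, and if you tried to implement it literally you would get stuck.

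The paper does \emph{not} work in polar coordinates. It first reduces to $K$ a compact subset of the open orthant $(0,+\infty)^d$ (the general case follows by restricting the group to coordinate subspaces and writing $\mathbb R^d\setminus\{0\}$ as a countable union of such pieces). Then $L$ is essentially a small box $\prod_i[b_i,b_i+\gamma]$ with all $b_i>0$, and the $d$ levels of the tree in Corollary~\ref{CORTECHNICALSEQ} correspond to the $d$ Cartesian coordinates, not to nested radial shells. The $i$-th coordinate of $x_{n,k}$ is built from sums of the form $b_i+\alpha_i C/\mu_{\phi(k_1,\dots,k_{i-1},0,\dots,0)}+\veps\sum_{j\le k_i}\mu_{\phi(k_1,\dots,k_{i-1},j,0,\dots,0)}^{-1}$, and one simply takes $A=C/\veps$. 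The covering \textbf{(B)} then holds with $m=n$: for each $x\in L$ one locates, coordinate by coordinate, the right $(\alpha_i,k_i)$, and the budget inequality of level $i$ guarantees the $\veps/\mu_n$-grid in coordinate $i$ is fine enough. The separation \textbf{(A)} for $n\neq m$ uses positivity of the coordinates: $\|\omega_m x_{m,j}-\omega_n x_{n,k}\|\ge\omega_m b_1-\omega_n(b_1+\gamma)\ge\mu_P(\rho b_1-b_1-\gamma)$, which is why $\gamma$ is chosen so that $\rho b_i-b_i-\gamma>0$ and $P$ so that this quantity times $\mu_P$ exceeds $C$. Your remark that ``the windows $[N_i,M_i]$ for successive coordinates are disjoint'' is not how the construction works: all $d$ coordinates are handled by a \emph{single} value $n=\phi(k_1,\dots,k_d)$, not by disjoint index ranges; the disjoint windows in the proof of Corollary~\ref{CORCOVERING2} are for the different pieces $L_1,\dots,L_q$ of $K$, not for coordinates.
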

\begin{proof}
We first show that $\bigcap_{a\in K}HC(T_{\lambda_n a})$ is a residual subset of $X$ when $K$ is a compact subset of $(0,+\infty)^d$. We shall prove that the conditions of Corollary \ref{CORCOVERING2} are satisfied. Hence, let $\veps>0$ and $C>0$. We then apply Corollary \ref{CORTECHNICALSEQ} to $A=C/\veps$ to get some $\rho>1$ and some subsequence $(\mu_n)$ of $(\lambda_n)$ with $\mu_{n+1}\geq\rho\mu_n$. Since $K\subset (0,+\infty)^d$, we may find $\gamma>0$ such that, given any $a=(a_1,\dots,a_d)\in K$, $\rho a_i-a_i-\gamma>0$ for all $i=1,\dots,d$. Let now $L$ be a compact subset of $K$ with diameter less than $\gamma$ and let $N\in\mathbb N$. To simplify the notations, we shall assume that $L=\prod_{i=1}^d [b_i,b_i+\gamma]$ with $b_i>0$. We apply the properties of the sequence $(\mu_n)$ given by Corollary \ref{CORTECHNICALSEQ} with $P\geq N$ such that
$$\mu_P\inf_{i=1,\dots,d}(\rho b_i-b_i-\gamma)\geq C.$$
We get maps $s_1,\dots,s_d$ and $\phi$. We may now define our covering of $L$. We set 
$$n_0=\min_{(k_1,\dots,k_d)}\phi(k_1,\dots,k_d),\ m_0=\max_{(k_1,\dots,k_d)}\phi(k_1,\dots,k_d)$$ and let $n\in \{n_0,\dots,m_0\}$. Then either $n$ is not a $\phi(k_1,\dots,k_d)$ and we do nothing. Or $n$ is equal to $\phi(k_1,\dots,k_d)$ for a (necessarily) unique $(k_1,\dots,k_d)$. We then define the set $\{x_{n,k}\}_{1\leq k\leq p_n}$ as 
$$\begin{array}{ll}
L\cap\Bigg\{\Bigg(&\displaystyle b_1+\frac{\alpha_1 C}{\mu_{\phi(0,\dots,0)}}+\frac{\varepsilon }{\mu_{\phi(1,0,\dots,0)}}+\dots+\frac{\varepsilon }{\mu_{\phi(k_1,0,\dots,0)}},
\\[4mm]
&\displaystyle b_2+\frac{\alpha_2 C}{\mu_{\phi(k_1,0,\dots,0)}}+\frac{\varepsilon }{\mu_{\phi(k_1,1,0,\dots,0)}}+\dots+\frac{\varepsilon }{\mu_{\phi(k_1,k_2,\dots,0)}},
\\
&\quad\quad\quad\vdots\\
&\displaystyle b_d+\frac{\alpha_d C}{\mu_{\phi(k_1,\dots,k_{d-1},0)}}+\frac{\varepsilon }{\mu_{\phi(k_1,\dots,k_{d-1},1)}}+\dots+\frac{\varepsilon }{\mu_{\phi(k_1,\dots,k_d)}} \Bigg);\ \alpha_1,\dots,\alpha_d\geq 0\Bigg\}.
\end{array}
$$
We also set $\omega_n=\mu_{\phi(k_1,\dots,k_d)}$ and we claim that {\bf (A)} and {\bf (B)} of Corollary \ref{CORCOVERING2} are satisfied with $\omega_n$ instead of $\lambda_n$ (but $\omega_n$ is of course some $\lambda_m$ and it would be sufficient to renumber everything). Indeed, let $(n,k)\neq (m,j)$. We distinguish two cases:
\begin{itemize}
\item either $n\neq m$, for instance $n<m$. In that case, 
\begin{eqnarray}\label{EQMAINCOMMON}
\|\omega_m x_{m,j}-\omega_n x_{n,k}\|\geq \omega_m b_1-\omega_n (b_1+\gamma)\geq \mu_P (\rho b_1-b_1-\gamma)\geq C.
\end{eqnarray}
\item or $n=m=\phi(k_1,\dots,k_d)$. We write $x_{n,k}$ and $x_{n,j}$ as before, with respectively the sequences $(\alpha_1,\dots,\alpha_d)$ and $(\beta_1,\dots,\beta_d)$. Since $k\neq j$, at least one $\beta_i$ differs from $\alpha_i$. Now, looking at this coordinate, we get
$$\|\omega_n x_{n,k}-\omega_n x_{n,j}\|\geq\frac{\mu_{\phi(k_1,\dots,k_d)}}{\mu_{\phi(k_1,\dots,k_{i-1},0,\dots,0)}}C\geq C.$$
\end{itemize}
Let us now prove {\bf (B)}: let $x\in L$. There exists $\alpha_1>0$ such that 
$$b_1+\frac{\alpha_1C}{\mu_{\phi(0,\dots,0)}}\leq x_1\leq b_1+\frac{(\alpha_1+1)C}{\mu_{\phi(0,\dots,0)}}.$$
Now, by construction of $\phi$, using Corollary \ref{CORTECHNICALSEQ} (recall that $A=C/\veps$), there exists $k_1<s_1$ such that 
\begin{eqnarray*}b_1+\frac{\alpha_1C}{\mu_{\phi(0,\dots,0)}}+\frac{\varepsilon }{\mu_{\phi(1,0,\dots,0)}}+\dots+\frac{\varepsilon }{\mu_{\phi(k_1,0,\dots,0)}}\leq x_1\leq \\
 b_1+\frac{\alpha_1C}{\mu_{\phi(0,\dots,0)}}+\frac{\varepsilon }{\mu_{\phi(1,0,\dots,0)}}+\dots+\frac{\varepsilon }{\mu_{\phi(k_1+1,0,\dots,0)}}.
\end{eqnarray*}
This $k_1$ being fixed, there exists $\alpha_2\geq 0$ such that 
$$b_2+\frac{\alpha_2 C}{\mu_{\phi(k_1,0,\dots,0)}}\leq x_2\leq b_2+\frac{(\alpha_2+1)C}{\mu_{\phi(k_1,0,\dots,0)}}.
$$
Iterating this construction, we find $\alpha_1,\dots,\alpha_d\geq 0$ and $k_1,\dots,k_d$ such that, for all $i=1,\dots,d$, 
\begin{eqnarray*}
b_i+\frac{\alpha_iC}{\mu_{\phi(k_1,\dots,k_{i-1},0,\dots,0)}}+\frac{\varepsilon }{\mu_{\phi(k_1,\dots,k_{i-1},1,0,\dots,0)}}+\dots+\frac{\varepsilon }{\mu_{\phi(k_1,\dots,k_{i-1},k_i,0,\dots,0)}}\leq x_i\leq \\
b_i+\frac{\alpha_iC}{\mu_{\phi(k_1,\dots,k_{i-1},0,\dots,0)}}+\frac{\varepsilon }{\mu_{\phi(k_1,\dots,k_{i-1},1,0,\dots,0)}}+\dots+\frac{\varepsilon }{\mu_{\phi(k_1,\dots,k_{i-1},k_i+1,0,\dots,0)}}.
\end{eqnarray*}
Let $n=\phi(k_1,\dots,k_d)$ and let $x_{n,k}$ corresponding to this value of $\alpha_1,\dots,\alpha_d$. Then, for any $i=1,\dots,d$, 
$$\|\omega_n x-\omega_n x_{n,k}\|\leq\mu_{\phi(k_1,\dots,k_d)}\times \sup_{i=1,\dots, d}\frac{\veps}{\mu_{\phi(k_1,\dots,k_{i}+1,0,\dots,0)}}\leq\veps.$$
Hence, by Corollary \ref{CORCOVERING2}, $\bigcap_{a\in K}HC(T_{\lambda_n a})$ is a residual subset of $X$.
This works for any compact set $K\subset (0,+\infty)^d$ or, more generally, for any compact set $K$ contained in some open orthant of $\mathbb R^d\backslash\{0\}$. Now, assume that $K=\{0\}^{d-e}\times K'$ where $K'$ is a compact set of $(0,+\infty)^{e}$ and define, for $b\in\mathbb R^e$, $S_b=T_{(0,b)}$. Then $(S_b)_{b\in\mathbb R^e}$ has the uniform mixing property and thus 
$$\bigcap_{a\in K}HC(T_{\lambda_n a})=\bigcap_{b\in K'}HC(S_{\lambda_n b})$$
is a residual subset of $X$.

To conclude, let $\mathcal P_e(d)$ be the subsets of $\{1,\dots,d\}$ with cardinal number equal to $e$. For $n\geq 1$, $\veps=\pm 1$ and $S\in\mathcal P_e(d)$, define
$$K_{i,n,\veps}(S)=\left\{
\begin{array}{ll}
\{0\}&\textrm{if }i\notin S\\
\left[\frac{\veps}n,\veps n\right]&\textrm{if }i\in S.
\end{array}\right.$$
Then, writing $\mathbb R^d\backslash\{0\}$ as the countable union $\bigcup_{e=1,\dots,d}\bigcup_{n\geq 1}\bigcup_{\veps\in\{-1,1\}^d}\bigcup_{S\in\mathcal P_e(d)}\prod_{i=1}^d K_{i,n,\veps_i}(S)$, we easily get the conclusion.
\end{proof}

\begin{remark}
In property (SG), the condition $\mu_{n+1}\geq\rho \mu_n$ is rather unpleasant. It is necessary to separate sufficiently $\mu_n x_{n,k}$ and $\mu_m x_{m,k}$ when $n\neq m$. If we just looked at $\bigcap_{\|a\|=1}HC(T_{\lambda_n a})$, we could replace it by the more pleasant condition $\mu_{n+1}-\mu_n\geq B$: see the section devoted to frequent hypercyclicity and in particular compare (\ref{EQMAINCOMMON}) above and (\ref{EQCOVERINGFHC}).
\end{remark}

\section{Examples}
\subsection{A sufficient condition}
We now give examples of operator groups having the uniform mixing property. We first begin by a criterion which can be seen as a strong form of the hypercyclicity criterion.
\begin{proposition}\label{PROPEXUNIFORM}
Let $(T_a)_{a\in\mathbb R^d}$ be an operator group on $X$ and let $\|\cdot\|$ be an $F$-norm on $X$. Assume that there exists a dense set $D\subset X$ such that, for any $f\in D$ and any $\veps>0$, there exists $C>0$ such that, for any $N\geq 1$, for any $a_1,\dots,a_N\in\mathbb R^d$ with $\|a_i-a_j\|\geq C$ if $i\neq j$ and $\|a_i\|\geq C$, then $\left\|\sum_{i=1}^N T_{a_i}f\right\|<\veps$. Then $(T_a)_{a\in\mathbb R^d}$ has the uniform mixing property.
\end{proposition}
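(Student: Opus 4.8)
The plan is to run a quantitative, hypercyclicity-criterion style argument, but to exploit the hypothesis to control \emph{all} of the parameters $a_1,\dots,a_p$ simultaneously through a single summed estimate. Fix nonempty open sets $U,V\subset X$. Since $D$ is dense, I would first pick $u\in U\cap D$ and $v\in V\cap D$, together with $\delta>0$ small enough that the $\|\cdot\|$-balls satisfy $B(u,\delta)\subset U$ and $B(v,\delta)\subset V$. The candidate common preimage will be
$$f=u+\sum_{j=1}^p T_{-a_j}v,$$
and the whole argument hinges on the group law $T_{a_i}T_{-a_j}=T_{a_i-a_j}$ together with $T_0=\mathrm{Id}$, which give
$$T_{a_i}f=v+T_{a_i}u+\sum_{j\neq i}T_{a_i-a_j}v,$$
so that each image $T_{a_i}f$ equals the target $v$ plus two error terms.

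Next I would produce $C$ by applying the hypothesis twice. Applying it to $v$ with tolerance $\delta/2$ yields a constant $C_v$; applying it to $u$ with tolerance $\delta/2$ yields $C_u$ (only the case $N=1$ is needed for $u$, to bound the single vector $T_{a_i}u$). I set $C=\max(C_u,C_v)$. Now assume $a_1,\dots,a_p$ satisfy $\|a_i\|\geq C$ and $\|a_i-a_j\|\geq C$ for $i\neq j$. The three families to feed into the hypothesis are $\{-a_j\}_{1\le j\le p}$, the single vectors $\{a_i\}$, and, for each fixed $i$, the family $\{a_i-a_j\}_{j\neq i}$. The routine but essential verification is that each is pairwise $C$-separated with all norms $\geq C$: indeed $\|-a_j\|=\|a_j\|\ge C$ and $\|(-a_j)-(-a_k)\|=\|a_k-a_j\|\ge C$, while $\|a_i-a_j\|\ge C$ and $\|(a_i-a_j)-(a_i-a_k)\|=\|a_k-a_j\|\ge C$.

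Hence the hypothesis for $v$ gives $\big\|\sum_{j}T_{-a_j}v\big\|<\delta/2$, so $f\in B(u,\delta)\subset U$; for each $i$ it gives $\big\|\sum_{j\ne i}T_{a_i-a_j}v\big\|<\delta/2$, while the $N=1$ case for $u$ gives $\|T_{a_i}u\|<\delta/2$. Adding these via the $F$-norm triangle inequality yields $\|T_{a_i}f-v\|<\delta$, i.e. $T_{a_i}f\in V$ for every $i$, which is exactly the uniform mixing property.

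The only genuinely delicate point is the observation that drives the argument: the differences $a_i-a_j$ inherit the separation of the $a_i$, so one decaying-sum hypothesis simultaneously annihilates the cross terms for every index $i$. Everything else (density to locate $u,v$; the group identity $T_{a_i}T_{-a_j}=T_{a_i-a_j}$; the triangle inequality and the symmetry $\|-x\|=\|x\|$ of the $F$-norm) is bookkeeping. I expect no serious obstacle beyond taking $C$ as the maximum of the two constants, so that all three families clear the separation threshold at once.
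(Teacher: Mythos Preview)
Your argument is correct and is essentially identical to the paper's own proof: both choose $u\in U\cap D$, $v\in V\cap D$ (the paper calls them $g,h$), set $f=u+\sum_j T_{-a_j}v$ (the paper has a harmless sign typo here), take $C=\max(C_u,C_v)$ from two applications of the hypothesis, and then bound $\|f-u\|$ and $\|T_{a_i}f-v\|$ via the same separation check on the families $\{-a_j\}$ and $\{a_i-a_j\}_{j\neq i}$. The one cosmetic remark is that the symmetry $\|-x\|=\|x\|$ you invoke is for the Euclidean norm on $\mathbb{R}^d$ (used to verify $\|-a_j\|\ge C$), not the $F$-norm on $X$; the $F$-norm step only needs the triangle inequality.
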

Observe that the hypercyclicity criterion shares the same assumptions restricted to $N=1$. For the definition of an $F$-norm, we refer to \cite{GePeBook}.
\begin{proof}
Let $U,V$ be nonempty open subsets of $X$. There exist $g,h\in D,\ \veps>0$ such that $B(g,\veps)\subset U$ and $B(h,\veps)\subset V$. Applying the assumptions with $(g,\veps/2)$ and with $(h,\veps/2)$, we get two positive $C_g$ and $C_h$. We set $C=\max(C_g,C_h)$. Let us consider $N\geq 1$ and $a_1,\dots,a_N\in\mathbb R^d$ with $\|a_i-a_j\|\geq C$ and $\|a_j\|\geq C$. We define
$$f=g-\sum_{i=1}^N T_{-a_i}h.$$
Clearly, $\|f-g\|<\veps/2<\veps$. Moreover, let $j\in\{1,\dots,N\}$. Then
$$T_{a_j}f=T_{a_j}g-\sum_{i\neq j}T_{a_j-a_i}h+h.$$
Setting $b_i=a_j-a_i$, then $\|b_i\|\geq C$ and $\|b_i-b_l\|=\|a_i-a_l\|\geq C$ provided $i\neq l$. Hence, 
$\|T_{a_j}f-h\|<\veps$ and $T_{a_j}f\in V$.
\end{proof}
\begin{example}
Let $w:\mathbb R^d\to\mathbb R$ be a positive bounded and continuous function such that $x\mapsto \frac{w(x+a)}{w(x)}$ is bounded for each $a\in\mathbb R^d$. For $a\in\mathbb R^d$ and $p\geq 1$, let $\tau_a$ be the translation operator defined on $X=L^p(\mathbb R^d,w(x)dx)$ by $\tau_af(x)=f(x+a)$. Assume moreover that $\int_{\mathbb R^d}w(x)dx<+\infty$. Then $\bigcap_{a\in\mathbb R^d\backslash\{0\}}HC(\tau_a)$ is a residual subset of $X$.
\end{example}
\begin{proof}
Let $D\subset X$ be the dense set of compactly supported continuous functions. Let $f\in D$, $\veps>0$ and let $A>0$ be such that the support of $f$ is contained in $B(0,A)$. There exists some $C>0$ such that
\begin{itemize}
\item for any $a,b\in\mathbb R^d$ with $\|a-b\|\geq C$, $T_a f$ and $T_b f$ have disjoint support.
\item $\int_{\|x\|\geq C-A}w(x)dx\leq \frac{\veps^p}{\|f\|_\infty^p}.$
\end{itemize}
Let $N\geq 1$ and $a_1,\dots,a_N\in\mathbb R^d$ with $\|a_i-a_j\|\geq C$ and $\|a_i\|\geq C$ provided $i\neq j$.      Then 
$$\left\|\sum_{i=1}^N T_{a_i}f\right\|^p\leq \int_{\|x\|\geq C-A}\|f\|_\infty^p w(x)dx\leq \veps^p$$
so that $(\tau_a)_{a\in\mathbb R^d}$ has the uniform mixing property.
\end{proof}
This improves (even when $d=1$) Example 7.20 of \cite{BM09}.

\medskip

We can also deduce from Proposition \ref{PROPEXUNIFORM} a useful corollary to get common hypercyclicity.
\begin{corollary}\label{COREXUNIFORM}
Let $(T_a)_{a\in\RR^d}$ be an operator group acting on $X$ and let $\|\cdot\|$ be an $F$-norm on $X$. Assume that there exist a dense set $D\subset X$ and $p>d$ such that, for any $f\in D$, there exists $A>0$ such that 
$$\|T_a f\|\leq \frac A{\|a\|^p}$$
for any $a\in\RR^d$ with $\|a\|\geq 1$. Then $(T_a)_{a\in\RR^d}$ has the uniform mixing property.
\end{corollary}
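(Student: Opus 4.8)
The plan is to verify the hypotheses of Proposition \ref{PROPEXUNIFORM}, from which the uniform mixing property then follows at once. I keep the same dense set $D$ and fix $f\in D$ and $\veps>0$. By assumption there is $A>0$ with $\|T_a f\|\le A/\|a\|^p$ for all $\|a\|\ge 1$. Since $\|\cdot\|$ is an $F$-norm it is subadditive, so for any points $a_1,\dots,a_N$ with $\|a_i\|\ge C\ge 1$ I have
$$\left\|\sum_{i=1}^N T_{a_i}f\right\|\le \sum_{i=1}^N \|T_{a_i}f\|\le A\sum_{i=1}^N\frac{1}{\|a_i\|^p}.$$
Everything therefore reduces to bounding $\sum_i \|a_i\|^{-p}$ under the separation hypotheses $\|a_i\|\ge C$ and $\|a_i-a_j\|\ge C$ for $i\neq j$, and to showing that this bound tends to $0$ as $C\to+\infty$.

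The heart of the matter is a packing estimate. I would group the indices according to the dyadic annuli $A_k=\{x\in\RR^d:2^kC\le\|x\|<2^{k+1}C\}$ for $k\ge 0$. Because the $a_i$ are pairwise $C$-separated, the balls $B(a_i,C/2)$ are pairwise disjoint; those centred in $A_k$ all lie in the ball of radius $2^{k+1}C+C/2$, so comparing Lebesgue volumes bounds the number of indices $i$ with $a_i\in A_k$ by a dimensional constant times $2^{kd}$. Each such point contributes at most $(2^kC)^{-p}$ to the sum, so the total contribution of $A_k$ is at most $c_d\,2^{k(d-p)}C^{-p}$. Note that $C\ge 1$ guarantees $\|a_i\|\ge 2^kC\ge 1$ throughout, so the decay estimate is legitimately applied in every annulus.

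Summing over $k\ge 0$ and invoking $p>d$ to sum the geometric series $\sum_{k\ge 0}2^{k(d-p)}$, I obtain $\sum_i \|a_i\|^{-p}\le K(d,p)\,C^{-p}$ for a constant depending only on $d$ and $p$. Hence $\bigl\|\sum_i T_{a_i}f\bigr\|\le A\,K(d,p)\,C^{-p}$, and it suffices to choose $C\ge 1$ large enough that $A\,K(d,p)\,C^{-p}<\veps$. This $C$ witnesses the hypothesis of Proposition \ref{PROPEXUNIFORM} for the pair $(f,\veps)$, and the uniform mixing property follows. The only genuinely delicate point is the volume-packing count in the annuli; the rest is routine, and the precise role of the assumption $p>d$ is to make the geometric series converge, so that the whole sum decays like $C^{-p}$.
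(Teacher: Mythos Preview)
Your proof is correct and follows essentially the same strategy as the paper: reduce to Proposition~\ref{PROPEXUNIFORM}, decompose into dyadic annuli, and use a volume-packing count to control the number of points per annulus so that the geometric series converges thanks to $p>d$. The only cosmetic difference is that the paper uses fixed annuli $\{2^k\le\|a\|<2^{k+1}\}$ with the weaker separation $\|a_i-a_j\|\ge 1$ (yielding a bound of order $C^{-(p-d)}$), whereas you rescale the annuli by $C$ and exploit the full separation $\|a_i-a_j\|\ge C$, which gives the sharper bound $K(d,p)\,C^{-p}$; either decay suffices.
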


\begin{proof}
We shall see that the assumptions of Proposition \ref{PROPEXUNIFORM} are satisfied. The key point is to observe that, if $(a_i)$ is any sequence in $\RR^d$ such that $\|a_i-a_j\|\geq 1$ for any $i\neq j$, there exists $\kappa_d>0$ such that, for any $k\geq 1$, 
$$\#\big\{i;\ 2^k\leq \|a_i\|<2^{k+1}\big\}\leq \kappa_d 2^{kd}.$$
Let now $f\in D$, $\veps>0$ and $C\geq 1$. Let $a_1,\dots,a_N\in\RR^d$ with $\|a_i-a_j\|\geq 1$ if $i\neq j$ and $\|a_i\|\geq C$ for any $i$. Then
\begin{eqnarray*}
\left\|\sum_{i=1}^N T_{a_i}f\right\|&\leq&\sum_{k;\ 2^k\geq C}\quad\sum_{i;\ 2^k\leq\|a_i\|< 2^{k+1}}\left\|T_{a_i}f\right\|\\
&\leq&A\sum_{k;\ 2^k\geq C}\quad \sum_{i;\ 2^k\leq \|a_i\|<2^{k+1}}\frac{1}{2^{kp}}\\
&\leq&A\kappa_d\sum_{k;\ 2^k\geq C}\frac{1}{2^{k(p-d)}}
\end{eqnarray*}
and this is less than $\veps$ provided $C$ is large enough.
\end{proof}

\subsection{The Runge property}
\label{SECRUNGE}

Our second example deals with groups having the Runge property introduced in \cite{Shk10}. The name "Runge property" is reminiscent for the method of proof of the hypercyclicity of $\tau_a$ on $H(\mathbb C)$, $a\neq 0$.
\begin{definition}
Let $(T_a)_{a\in\mathbb R^d}$ be  an operator group on $X$. We say that $(T_a)_{a\in\mathbb R^d}$ has the Runge property if for any continuous seminorm $\|\cdot\|$ on $X$, there exists $C>0$ such that, for any $N\geq 1$, for any $b_1,\dots,b_N\in\mathbb R^d$ with $\|b_i-b_j\|\geq C$ for $i\neq j$, for any $g_1,\dots,g_N\in X$, for any $\veps>0$, there exists $f\in X$ such that $\|T_{b_i}f-g_i\|<\veps$ for each $i=1,\dots,N$. 
\end{definition}
\begin{proposition}
An operator group having the Runge property has the uniform mixing property.
\end{proposition}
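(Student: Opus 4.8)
The plan is to feed the Runge property a system of points that contains, in addition to the prescribed $a_1,\dots,a_p$, an extra \emph{base point} equal to $0$; this is what will produce the vector $f\in U$ and will simultaneously encode the condition $\|a_i\|\ge C$ of the uniform mixing property.

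First I would reduce the open sets to seminorm balls. Given nonempty open sets $U,V$, pick $g_0\in U$ and $h\in V$. Since the Runge property is formulated in terms of continuous seminorms, we work in the natural locally convex setting, so there exist a continuous seminorm $\|\cdot\|$ and $\veps>0$ with $\{f:\ \|f-g_0\|<\veps\}\subset U$ and $\{f:\ \|f-h\|<\veps\}\subset V$; if the basic neighbourhoods of $g_0$ and of $h$ involve several seminorms, one takes their maximum, which is again a continuous seminorm. Let $C>0$ be the constant furnished by the Runge property for this seminorm $\|\cdot\|$. I claim this $C$ works for the uniform mixing property applied to $U$ and $V$.

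Next, given $p\ge 1$ and $a_1,\dots,a_p\in\RR^d$ with $\|a_i\|\ge C$ and $\|a_i-a_j\|\ge C$ for $i\ne j$, set $b_0=0$ and $b_i=a_i$ for $1\le i\le p$. These $p+1$ points are pairwise $C$-separated: for $i,j\ge 1$ one has $\|b_i-b_j\|=\|a_i-a_j\|\ge C$, while $\|b_0-b_i\|=\|a_i\|\ge C$, so the separation-from-zero hypothesis of uniform mixing is exactly what makes $0$ admissible as an extra node. Applying the Runge property with these $p+1$ points, with target values $g_0$ at the index $0$ and $h$ at each index $1,\dots,p$, yields $f\in X$ with $\|T_{b_0}f-g_0\|<\veps$ and $\|T_{b_i}f-h\|<\veps$ for all $i$. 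Since $(T_a)$ is a group, $T_{b_0}=T_0=\mathrm{Id}$, so the first inequality reads $\|f-g_0\|<\veps$, i.e. $f\in U$; the remaining inequalities give $T_{a_i}f\in V$ for every $i$. This is precisely the conclusion of the uniform mixing property, with a constant depending only on $U$ and $V$.

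The only genuinely delicate point is the passage from open sets to a single controlling seminorm, which is why local convexity is needed; once one works with a fixed seminorm the argument is clean bookkeeping of indices. The conceptual heart is the adjunction of the base point $b_0=0$: it converts the asymmetric mixing requirement (one distinguished target $U$ reached with no translation, all others reached after large translations) into the symmetric interpolation statement that the Runge property delivers.
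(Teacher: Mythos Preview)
Your proof is correct and is essentially identical to the paper's own argument: both add the base point $b_0=0$ to the family $a_1,\dots,a_p$, observe that the separation hypotheses make all $p+1$ points pairwise $C$-separated, and then apply the Runge property with target $g_0$ at index $0$ and $h$ at the other indices. Your remark that one needs local convexity to reduce the open sets $U,V$ to a single controlling seminorm is a fair observation (the paper tacitly uses this as well).
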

\begin{proof}
Let $U,V$ be nonempty open subsets of $X$, let $g,h\in X$, let $\|\cdot\|$ be a continuous seminorm on $X$ such that $\{f\in X;\ \|f-h\|<1\}\subset U$ and $\{f\in X;\ \|f-g\|<1\}\subset V$. For this seminorm $\|\cdot\|$ and for $\veps=1$, applying the definition of the Runge property, we find some $C>0$. Let $N\geq 1$ and $a_1,\dots,a_N\in\mathbb R^d$ with $\|a_i-a_j\|\geq C$ and $\|a_i\|\geq C$ for any $i\neq j$. Define $b_0=0$ and $b_i=a_i$ for $i=1,\dots,N$. Then there exists $f\in X$ such that $\|T_{b_0}f-h\|<1$ and $\|T_{b_i}f-g\|<1$ for $i=1,\dots,N$. In particular, $f\in U$ and $T_{a_i}f\in V$ for any $i=1,\dots,N$, so that $(T_a)_{a\in\RR^d}$ has the uniform mixing property.
\end{proof}

In \cite{Shk10}, it is shown that the translation group $(\tau_a)_{a\in\mathbb C}$ acting on $H(\mathbb C)$    has the Runge property. In several complex variables, the situation is less clear due to the lack of Runge theorem. However, this remains true if we restrict ourselves to translations by vectors in $\mathbb R^d$.
\begin{example}
The operator group $(\tau_a)_{a\in\mathbb R^d}$ acting on $ H(\mathbb C^d)$ has the Runge property.
\end{example}
\begin{proof}
By a result of Khudaiberganov \cite{Khu87}, the union of a finite union of disjoint balls in $\mathbb C^d$ with centers in $\mathbb R^d$ is polynomially convex. By the Oka-Weil theorem (see for instance \cite{Ran98}), any function holomorphic in a neighbourhood of a compact polynomially convex set $K\subset\mathbb C^d$ can be uniformly approximated by holomorphic polynomials. Hence, let $\|\cdot\|$ be a continuous seminorm on $H(\mathbb C^d)$. There exists $A>0$ such that, for any $f\in H(\mathbb C^d)$, 
$$\|f\|\leq \|f\|_A:=A\sup_{\|z\|\leq A}|f(z)|.$$
We set $C=4A$ and we consider a finite set of points $b_1,\dots,b_N$ in $\mathbb R^d$ and a finite set of holomorphic functions $g_1,\dots,g_N$ such that $\|b_i-b_j\|\geq C$ provided $i\neq j$. Let $B_i$ be the closed ball $B_i=\{z\in\mathbb C^d;\ \|z-b_i\|\leq A\}$. Since these balls are pairwise disjoint, $K=B_1\cup\dots\cup B_N$ is polynomially convex. Thus, there is a polynomial $f$ such that $\sup_{z\in B_i}|f(z)-g_i(z+b_i)|<\veps/A$ for any $i=1,\dots,N$. We obtain immediately that $\|T_{b_i}f-g_i\|<\veps$ for any $i=1,\dots,N$.
\end{proof}
It is also very easy to show that the translation group $(\tau_a)_{a\in\mathbb R^d}$ acting on the Fr\'echet space $C_c(\mathbb R^d)$ of continous functions $f:\mathbb R^d \to\mathbb R$ with the topology of uniform convergence on compact sets satisfies the Runge property.

\subsection{Heisenberg translations}
We now give an example of a group which is not a translation group. Let $d\geq 2$ and let $X=H^2(\mathbb B_d)$ be the Hardy space on the (euclidean) unit ball of $\mathbb C^d$ denoted by $\mathbb B_d$. Let $\phi$ be an automorphism of $\bd$. Then the composition operator $C_\phi$ defined by $C_\phi(f)=f\circ\phi$ is a bounded operator on $H^2(\bd)$. When $\phi$ has no fixed points in $\bd$, it has be shown in \cite{GKX00} that $C_\phi$ is hypercyclic.

A class of automorphisms plays a crucial role in the study of composition operators and of linear fractional maps of the ball, the class of Heisenberg translations (see \cite{BAYPARBALL} or \cite{BAYCHARP11}).
To understand these automorphisms, it is better to move on the Siegel upper half-space 
$$\hd=\big\{Z=(z,\mathbf w)\in\CC\times\CC^{d-1};\ \Im m(z)>|\mathbf w|^2\big\}.$$
The Siegel half-space is biholomorphic to $\bd$ via the Cayley map $\omega$ defined by 
$$\omega(z,\mathbf w)=\left(i\frac{1+z}{1-z},\frac{i\mathbf w}{1-z}\right),\ \omega^{-1}(z,\mathbf w)=\left(\frac{z-i}{z+i},\frac{2\mathbf w}{z+i}\right).$$
The Cayley transform extends to a homeomorphism of $\overline{\bd}$ onto $\hd\cup\partial\hd\cup\{\infty\}$, the one-point
compactification of $\overline{\hd}$.  The image
of $H^2(\bd)$ by the Cayley transform is denoted by $\hhd$ :
$$\hhd=\left\{F:\hd\to\CC\textrm{ holomorphic};\ F\circ\omega^{-1}\in H^2(\bd)\right\}.$$
As one easily sees by computing the jacobian of $\omega$, $\H_2(\hd)$ is endowed with the norm
$$\|F\|_{\H^2}^2=\kappa^2\int_{\partial \hd}\frac{|F(z,\mathbf w)|^2}{|z+i|^{2d}}d\sigma_{\partial\hd},$$
where $\kappa$ is a constant that we will not try to compute.

For $\gamma\in\mathbb C^{d-1}\backslash\{0\}$, the Heisenberg translation with symbol $\gamma$ is defined by
$$H_\gamma(z,\mathbf w)=(z+2i\langle \mathbf w,\gamma\rangle+i|\gamma|^2,\bw+\gamma).$$
$H_\gamma$ is an automorphism of $\hd$ fixing $\infty$ only and, as already mentioned, $C_{H_\gamma}$ is hypercyclic on $\hhd$. It is also easy to check that $(C_{H_\gamma})_{\gamma\in\CC^{d-1}}$ is a strongly continous group on $\hhd$.
\begin{theorem}\label{THMHEISENBERG}
$\bigcap_{\gamma\in\CC^{d-1}\backslash\{0\}}HC(C_{H_\gamma})$ is a residual subset of $\hhd$.
\end{theorem}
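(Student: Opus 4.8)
The plan is to stop treating $(C_{H_\gamma})_{\gamma}$ as a (non-commutative, and in fact not even composition-closed) collection of operators and instead to realize it inside a genuine strongly continuous group and run a covering argument in the correct, \emph{Heisenberg}, geometry. First I would record the group law. Since $C_{H_\gamma}^n=C_{H_{n\gamma}}$, one has $HC(C_{H_\gamma})=HC\big((C_{H_{n\gamma}})_n\big)$, and the sequence $(n)$ has property (SG) because $(n+1)/n\to1$. A direct computation gives $H_{\gamma'}\circ H_\gamma=Z_{-2\Im\pss{\gamma}{\gamma'}}\circ H_{\gamma+\gamma'}$, where $Z_s(z,\bw)=(z+s,\bw)$ is the central vertical translation; equivalently $C_{H_\alpha}C_{H_\beta}=C_{H_{\alpha+\beta}}C_{Z_{-2\Im\pss{\alpha}{\beta}}}$. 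Thus $(C_{H_\gamma})$ is \emph{not} commutative and Theorem \ref{THMMAINCOMMON} cannot be quoted verbatim. Instead I would embed it in the Heisenberg group $\mathcal G=\CC^{d-1}\times\RR$, with product $(\gamma,s)\ast(\gamma',s')=(\gamma+\gamma',\,s+s'-2\Im\pss{\gamma}{\gamma'})$ and dilations $\delta_t(\gamma,s)=(t\gamma,t^2s)$, acting strongly continuously by $T_{(\gamma,s)}=C_{Z_s\circ H_\gamma}$, and equip $\mathcal G$ with its homogeneous (Koranyi) metric, for which $\delta_t$ scales distances by $t$ and the central direction scales quadratically.

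Next I would establish that $(T_g)_{g\in\mathcal G}$ satisfies the Heisenberg analogue of the Runge property, hence is uniformly mixing. Transferring to $\bd$ through the Cayley map $\omega$, each $T_g$ becomes a composition operator with a parabolic automorphism of $\bd$, and the task becomes: for finitely many $\mathcal G$-separated parameters $g_i$ and prescribed targets, produce $f\in\hhd$ with each $T_{g_i}f$ close to its target. As in the example of $(\tau_a)$ on $H(\CC^d)$, this reduces by Oka--Weil to the polynomial convexity of a finite union of pairwise disjoint images of a fixed small ball; the $\mathcal G$-separation of the $g_i$ guarantees both the disjointness and, via a Khudaiberganov-type convexity statement (cf. \cite{Khu87}), the polynomial convexity. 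This step is essentially parallel to the translation example, and I do not expect it to be the difficulty.

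The heart of the proof, and the step I expect to be hardest, is the covering argument carried out in $\mathcal G$. Running the proof of Theorem \ref{THMCOVERING1} for the sequence $(nx)$ with \emph{spatial} covering points naively produces, because of the commutation relation above, a spurious central translation $C_{Z_s}$ with $s=2\Im\pss{\lambda_m x-\lambda_n x_{n,k}}{\lambda_n x_{n,k}}$; writing $a=\lambda_m x-\lambda_n x_{n,k}$ for the small approximation error one gets $s=-2\lambda_m\Im\pss{x}{a}$, so $|s|$ is of order $\lambda_m\veps$ and does \emph{not} tend to $0$. This is the genuine new obstruction, the non-commutative analogue of Borichev's dimension obstruction. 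The remedy is to cover in the Heisenberg metric: I would enlarge each chosen point to $g_{n,k}=(\lambda_n x_{n,k},\sigma_{n,k})$ and let the central coordinates $\sigma_{n,k}$ range over a grid of mesh $\sim\lambda_n^{-2}$ — the natural, quadratic scale of the central direction — so that for every target $(nx,0)$ one can select $g_{n,k}$ with $g_{n,k}^{-1}\ast(nx,0)$ small in the homogeneous metric. Then $T_{g_{n,k}^{-1}\ast(nx,0)}$ is close to the identity by strong continuity and $T_{(nx,0)}f=T_{g_{n,k}^{-1}\ast(nx,0)}\big(T_{g_{n,k}}f\big)\in V$, with no leftover central factor.

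To produce such a covering together with the separation condition \textbf{(A)} (now in the Koranyi metric, where central coordinates must be $\sim C^2$ apart within one spatial cell), I would invoke Corollary \ref{CORTECHNICALSEQ} with its parameter set equal to the homogeneous dimension $Q=2d$ of $\mathcal G$: the $2(d-1)$ spatial directions contribute one layer each at scale $\lambda_n^{-1}$, and the single central direction, scaling quadratically, contributes the two remaining layers needed to reach scale $\lambda_n^{-2}$. Property (SG) of $(n)$ supplies exactly the iterated divergence that Corollary \ref{CORTECHNICALSEQ} converts into this multi-scale grid, and checking that the grid simultaneously satisfies \textbf{(A)} and the covering property \textbf{(B)} in the Heisenberg metric is the main technical computation. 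Finally, as in Theorem \ref{THMMAINCOMMON}, I would reduce to the case where $K$ is a compact subset of $\CC^{d-1}\backslash\{0\}$ lying in a fixed orthant away from the coordinate subspaces (handling parameters with vanishing coordinates by passing to the corresponding lower-dimensional sub-Heisenberg-group), write $\CC^{d-1}\backslash\{0\}$ as a countable union of such compacta, and conclude the residuality of $\bigcap_{\gamma\in\CC^{d-1}\backslash\{0\}}HC(C_{H_\gamma})$ by Baire's theorem.
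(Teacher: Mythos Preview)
The paper's own proof is a single sentence: combine Corollary~\ref{COREXUNIFORM} with the decay estimate of Lemma~\ref{LEMHEISENBERG3}, treating $(C_{H_\gamma})_{\gamma\in\CC^{d-1}}$ as a strongly continuous operator group over $\RR^{2(d-1)}$. You are right that this identification is problematic: the Heisenberg translations are not even closed under composition (the defect is the real shift $Z_{-2\Im\pss{\gamma}{\gamma'}}$), so neither the relation $T_{a}T_{b}=T_{a+b}$ used in Proposition~\ref{PROPEXUNIFORM} nor the factorisation in the proof of Theorem~\ref{THMCOVERING1} is literally available. Your computation that the leftover central term has size of order $\lambda_m\veps$ is exactly the obstruction, and the paper's one-line argument does sweep this under the rug.

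Your proposed repair, however, has a definite gap of its own. In step~2 you want to establish the \emph{Runge property} for $(T_g)_{g\in\mathcal G}$ on $\hhd$ via polynomial convexity and Oka--Weil, by analogy with the example of $(\tau_a)$ on $H(\CC^d)$. This cannot work: $\hhd$ is a Hilbert space, and the paper itself remarks (immediately after introducing the uniform mixing property, citing \cite{BoGre07}) that the Runge property is impossible on a Banach space --- indeed, the very purpose of this example in the paper is to exhibit uniform mixing \emph{without} Runge. The correct route is to stay with quantitative decay: extend Lemma~\ref{LEMHEISENBERG3} to the full group $T_{(\gamma,s)}$ and feed it into a Heisenberg version of Corollary~\ref{COREXUNIFORM}. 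A second, subtler difficulty then appears: the packing argument in Corollary~\ref{COREXUNIFORM} needs the decay exponent to exceed the dimension of the ambient group, which for $\mathcal G$ with its anisotropic dilations is the homogeneous dimension $Q=2d$, while the method of Lemma~\ref{LEMHEISENBERG3} only produces exponents strictly below $2d-1$. So before your (otherwise reasonable) covering scheme of step~3 can be invoked, either the decay must be sharpened in the Heisenberg scale, or the packing/covering argument must be refined to exploit that the points one actually needs to separate all lie on the horizontal slice $\{s=0\}$, which is genuinely thinner than a generic Koranyi-separated set in $\mathcal G$.
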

Before to prove this theorem, we need a couple of lemmas.
\begin{lemma}\label{LEMHEISENBERG1}
Let $p\geq 1$. The functions $(z_1-1)^p g(z)$, where $g$ runs over the ball algebra $A(\bd)$, are dense in $H^2(\bd)$.
\end{lemma}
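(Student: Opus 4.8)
The plan is to prove density by showing that the orthogonal complement of the linear subspace $(z_1-1)^pA(\bd)$ in $H^2(\bd)$ is trivial; since this subspace is a linear manifold, density is equivalent to the vanishing of every $h\in H^2(\bd)$ orthogonal to it. The monomials $z^\alpha=z_1^{\alpha_1}\cdots z_d^{\alpha_d}$ belong to $A(\bd)$ and form an orthogonal basis of $H^2(\bd)$, with $\|z^\alpha\|^2=c_\alpha:=\frac{(d-1)!\,\alpha!}{(d-1+|\alpha|)!}$ (Rudin, \emph{Function theory in the unit ball}). So it suffices to take $h=\sum_\beta a_\beta z^\beta\in H^2(\bd)$ satisfying $\langle h,(z_1-1)^pz^\alpha\rangle=0$ for every multi-index $\alpha$, and to deduce $h=0$.

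First I would expand $(z_1-1)^p=\sum_{k=0}^p\binom{p}{k}(-1)^{p-k}z_1^k$, so that $(z_1-1)^pz^\alpha$ is a linear combination of the monomials $z^{\alpha+ke_1}$, $0\le k\le p$. By orthogonality of the $z^\gamma$, the relation $\langle h,(z_1-1)^pz^\alpha\rangle=0$ reads, for every $\alpha$,
\[
\sum_{k=0}^p\binom{p}{k}(-1)^{p-k}\,a_{\alpha+ke_1}\,c_{\alpha+ke_1}=0.
\]
Fixing the last coordinates $\alpha'=(\alpha_2,\dots,\alpha_d)$ and setting $b_m:=a_{(m,\alpha')}\,c_{(m,\alpha')}$, this is precisely $(\Delta^p b)_m=0$ for all $m\ge 0$, where $\Delta$ is the forward difference operator. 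Since the kernel of $\Delta^p$ on sequences consists exactly of the polynomial sequences of degree at most $p-1$, the sequence $m\mapsto b_m$ is a polynomial of degree at most $p-1$.

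The key quantitative step is then to confront this with the summability condition coming from $h\in H^2(\bd)$. For each fixed $\alpha'$,
\[
\sum_{m\ge 0}|a_{(m,\alpha')}|^2\,c_{(m,\alpha')}=\sum_{m\ge 0}\frac{|b_m|^2}{c_{(m,\alpha')}}<+\infty .
\]
Using $c_{(m,\alpha')}=\frac{(d-1)!\,m!\,\alpha'!}{(d-1+m+|\alpha'|)!}\sim \text{const}\cdot m^{-(d-1+|\alpha'|)}$ as $m\to\infty$, the general term behaves like $|b_m|^2\,m^{\,d-1+|\alpha'|}$. If $b_m$ were a nonzero polynomial this term would grow at least polynomially in $m$, so the series would diverge; hence $b_m\equiv 0$. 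This forces $a_{(m,\alpha')}=0$ for all $m$ and all $\alpha'$, that is $h=0$, which yields the asserted density.

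The main obstacle is recognizing the orthogonality conditions as the vanishing of a $p$-th order finite difference, which confines the coefficient sequences to the finite-dimensional space of polynomial solutions, and then exploiting the polynomial decay of the monomial norms $c_\alpha$ to rule out every nonzero polynomial solution. The remaining ingredients — the binomial expansion of $(z_1-1)^p$, the orthogonality of the monomials, and the asymptotics of $c_{(m,\alpha')}$ — are routine.
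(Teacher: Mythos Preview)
Your argument is correct. The orthogonality conditions do reduce, for each fixed $\alpha'=(\alpha_2,\dots,\alpha_d)$, to $\Delta^p b\equiv 0$ for the sequence $b_m=a_{(m,\alpha')}c_{(m,\alpha')}$, so $b$ is a polynomial of degree $<p$; and since $1/c_{(m,\alpha')}\asymp m^{\,d-1+|\alpha'|}$ with $d-1+|\alpha'|\geq 0$, the series $\sum_m |b_m|^2/c_{(m,\alpha')}$ diverges unless $b\equiv 0$. This even yields the slightly stronger statement that $(z_1-1)^p\cdot(\text{polynomials})$ is dense in $H^2(\bd)$.

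The paper proceeds quite differently: it gives a short constructive approximation. For $h\in A(\bd)$ one writes
\[
f_k(z)=h(z)\Big(1-\Big(\tfrac{1+z_1}{2}\Big)^k\Big)^p=(1-z_1)^p g_k(z),\qquad g_k\in A(\bd),
\]
using that $1-\big(\tfrac{1+z_1}{2}\big)^k$ has the factor $\tfrac{1-z_1}{2}$. On $\partial\bd$ one has $\big|\tfrac{1+z_1}{2}\big|<1$ a.e., so $f_k\to h$ a.e.\ with uniform bound $2^p|h|$, and dominated convergence gives $f_k\to h$ in $H^2(\bd)$; density of $A(\bd)$ in $H^2(\bd)$ finishes the proof. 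This is shorter and avoids any computation with the monomial norms, at the cost of producing the explicit mollifier $\big(1-(\tfrac{1+z_1}{2})^k\big)^p$. Your duality approach, by contrast, is a clean Hilbert-space argument that also makes transparent \emph{why} the factor $(z_1-1)^p$ causes no loss: the resulting difference equation has only polynomial solutions, and these are excluded by square-summability.
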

\begin{proof}
Let $h\in A(\bd)$ and define 
\begin{eqnarray*}
f_k(z)&=&h(z)\left(1-\left(\frac{1+z_1}{2}\right)^k\right)^p\\
&=&g_k(z)(1-z_1)^p
\end{eqnarray*}
with $g_k\in A(\bd)$. Lebesgue's theorem implies that $f_k$ tends to $h$ in $H^2(\bd)$. We conclude by density of $A(\bd)$ in $H^2(\bd)$.
\end{proof}
\begin{lemma}\label{LEMHEISENBERG2}
Let $p\geq 1$ and let 
$$D_p=\left\{F\in A(\mathbb H_d);\ \textrm{there exists }C>0\textrm{ s.t. }\forall (z,\bw)\in\hd,\ |F(z,\bw)|\leq \frac{C}{|z+i|^p}\right\}.$$
Then $D_p$ is dense in $\hhd$.
\end{lemma}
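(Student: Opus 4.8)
The plan is to transport the density statement of Lemma \ref{LEMHEISENBERG1} from $H^2(\bd)$ to $\hhd$ through the Cayley transform. By the very definition of the norm on $\hhd$, the composition map $U\colon g\mapsto g\circ\omega^{-1}$ is an isometric isomorphism of $H^2(\bd)$ onto $\hhd$: the Jacobian factor $|z+i|^{-2d}$ appearing in $\|\cdot\|_{\H^2}$ is exactly what turns the change of variables into an isometry. Since $U$ is onto and isometric, it suffices to exhibit a dense subset of $H^2(\bd)$ whose image under $U$ is contained in $D_p$.

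By Lemma \ref{LEMHEISENBERG1}, the set $\mathcal D=\{(z_1-1)^pg(z);\ g\in A(\bd)\}$ is dense in $H^2(\bd)$. I would therefore fix $g\in A(\bd)$, set $G(z)=(z_1-1)^pg(z)$, and compute $F:=U(G)=G\circ\omega^{-1}$. Using $\omega^{-1}(z,\bw)=\left(\frac{z-i}{z+i},\frac{2\bw}{z+i}\right)$, the only point to notice is the elementary identity
$$\frac{z-i}{z+i}-1=\frac{-2i}{z+i},$$
which converts the factor $(z_1-1)^p$, vanishing at the boundary point $z_1=1$ of the ball, into a factor decaying at infinity on $\hd$. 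Explicitly,
$$F(z,\bw)=\frac{(-2i)^p}{(z+i)^p}\,g\!\left(\frac{z-i}{z+i},\frac{2\bw}{z+i}\right).$$

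Then I would check that $F\in D_p$. First, $z+i$ does not vanish on $\hd$ (there one has $\Im m(z)>|\bw|^2\geq 0$, so $z\neq -i$), hence $(z+i)^{-p}$ is holomorphic and bounded on $\hd$; together with $g\circ\omega^{-1}$ (holomorphic on $\hd$, continuous up to the boundary, and vanishing at $\infty$ thanks to the factor, since $\omega^{-1}$ extends to a homeomorphism of the compactifications) this gives $F\in A(\hd)$. Second, since $\omega^{-1}$ maps $\hd$ into $\bd$ and $g$ is bounded on $\bd$, one obtains
$$|F(z,\bw)|\leq \frac{2^p\|g\|_\infty}{|z+i|^p}\qquad\text{for all }(z,\bw)\in\hd,$$
which is precisely the decay required in the definition of $D_p$, with $C=2^p\|g\|_\infty$. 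Thus $U(\mathcal D)\subset D_p$, and since $U$ is an isometric isomorphism carrying the dense set $\mathcal D$ into $D_p$, the set $D_p$ is dense in $\hhd$.

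The argument is essentially bookkeeping once the isometry $U$ is identified; the genuine inputs are Lemma \ref{LEMHEISENBERG1} and the boundary factor identity above. The main point to be careful about is the precise meaning of $A(\hd)$ and the behaviour at $\infty$: I would make sure that the vanishing of $(z_1-1)^p$ at the point of $\partial\bd$ that $\omega$ sends to $\infty$ is exactly what guarantees that $F$ extends continuously (by $0$) at infinity, so that $F$ genuinely lies in $A(\hd)$ rather than being merely holomorphic on $\hd$.
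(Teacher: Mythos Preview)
Your proof is correct and follows exactly the approach of the paper: transport the dense set of Lemma~\ref{LEMHEISENBERG1} through the Cayley isomorphism, using the identity $\frac{z-i}{z+i}-1=\frac{-2i}{z+i}$ to convert the vanishing factor $(z_1-1)^p$ into the required decay $|z+i|^{-p}$. The paper's own proof is a one-line sketch of precisely this computation; you have simply spelled out the details (the isometry, the explicit bound $C=2^p\|g\|_\infty$, and the membership in $A(\hd)$).
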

\begin{proof}
This follows immediately from the previous lemma and the definition of the Cayley map, since
$$\left|\frac{z-i}{z+i}-1\right|= \frac{2}{|z+i|}.$$
\end{proof}
\begin{lemma}\label{LEMHEISENBERG3}
There exists a dense set $D\subset\hhd$ such that, for any $F\in D$, there exists $A>0$ such that, for any $\gamma\in\CC^{d-1}\backslash\{0\}$, $|\gamma|\geq 1$,
\begin{eqnarray}\label{EQHEISENBERG}
\|C_{H_\gamma}F\|_{\H^2}\leq \frac{A}{|\gamma|^{2d-\frac 32}}.
\end{eqnarray}
\end{lemma}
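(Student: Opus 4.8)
The plan is to take for $D$ the dense set $D_p$ furnished by Lemma \ref{LEMHEISENBERG2}, for the \emph{specific} value $p=d-\frac34$ (admissible since $d\geq 2$ forces $p\geq\frac54\geq 1$), and to estimate $\|C_{H_\gamma}F\|_{\H^2}$ directly on $\partial\hd$. I would parametrise $\partial\hd$ by $(t,\mathbf w)\in\RR\times\CC^{d-1}$ via $z=t+i|\mathbf w|^2$, the surface measure $d\sigma_{\partial\hd}$ being in these coordinates a fixed multiple of $dt\,dV(\mathbf w)$ (the Haar measure of the Heisenberg group). For $F\in D_p$ one has $|F(z,\mathbf w)|\leq C/|z+i|^p$, whence, writing $(z',\mathbf w')=H_\gamma(z,\mathbf w)$,
$$\|C_{H_\gamma}F\|_{\H^2}^2\leq \kappa^2C^2\int_{\CC^{d-1}}\int_\RR\frac{dt\,dV(\mathbf w)}{|z'+i|^{2p}\,|z+i|^{2d}}.$$

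First I would compute the two quantities in the denominator. Writing $\langle\mathbf w,\gamma\rangle=\alpha+i\beta$ with $\alpha,\beta\in\RR$, the first coordinate of $H_\gamma(z,\mathbf w)$ is
$$z'=(t-2\beta)+i\big(|\mathbf w|^2+2\alpha+|\gamma|^2\big),$$
and since $|\mathbf w|^2+2\alpha+|\gamma|^2=|\mathbf w+\gamma|^2$ (this is exactly the fact that $H_\gamma$ preserves $\partial\hd$), one gets
$$|z'+i|^2=(t-2\beta)^2+\big(|\mathbf w+\gamma|^2+1\big)^2,\qquad |z+i|^2=t^2+\big(|\mathbf w|^2+1\big)^2.$$
The $t$-integral is then handled crudely: bounding the first factor by its supremum over $t$, namely $\big(|\mathbf w+\gamma|^2+1\big)^{-2p}$, and integrating the second, $\int_\RR(t^2+B^2)^{-d}\,dt=c_dB^{1-2d}$ with $B=|\mathbf w|^2+1$ (legitimate since $d\geq 2$), yields
$$\|C_{H_\gamma}F\|_{\H^2}^2\leq \kappa^2C^2c_d\int_{\CC^{d-1}}\frac{dV(\mathbf w)}{\big(|\mathbf w+\gamma|^2+1\big)^{2p}\big(|\mathbf w|^2+1\big)^{2d-1}}.$$

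It remains to estimate this two-centred integral over $\CC^{d-1}\cong\RR^{2d-2}$. I would invoke the standard fact that, for $\mathbf c\in\RR^m$ and exponents with $2\nu>m$ and $2\mu>m$,
$$\int_{\RR^m}\frac{dV(\mathbf x)}{(|\mathbf x-\mathbf c|^2+1)^{\nu}(|\mathbf x|^2+1)^{\mu}}\lesssim (1+|\mathbf c|)^{-2\min(\nu,\mu)},$$
proved by splitting into the balls $\{|\mathbf x|\leq|\mathbf c|/2\}$, $\{|\mathbf x-\mathbf c|\leq|\mathbf c|/2\}$ and the remaining region. Applying this with $m=2d-2$, $\nu=2p=2d-\frac32$ and $\mu=2d-1$, both convergence conditions $2\nu>m$, $2\mu>m$ hold and $\min(\nu,\mu)=2d-\frac32$, so the integral is $\lesssim|\gamma|^{-(4d-3)}$. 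Taking square roots gives $\|C_{H_\gamma}F\|_{\H^2}\leq A|\gamma|^{-(2d-3/2)}$ for $|\gamma|\geq 1$, with $A$ depending on $F$ only through $C$.

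The main obstacle is the two-centred integral, and the delicate point is that the value $p=d-\frac34$ is forced by the requirement that the decay exponent $2d-\frac32$ strictly exceed the real dimension $2(d-1)=2d-2$ of the parameter space $\CC^{d-1}$, so that Corollary \ref{COREXUNIFORM} can be applied to deduce Theorem \ref{THMHEISENBERG}. Since the margin is only $\frac12$, the estimate cannot be lossy: one must genuinely control the contribution near $-\gamma$ (where $(|\mathbf w|^2+1)^{-(2d-1)}\asymp|\gamma|^{-(4d-2)}$) and verify it does not overwhelm the dominant term $|\gamma|^{-(4d-3)}$ coming from the neighbourhood of the origin. This is precisely why the clean two-centred bound above, rather than a single one-sided supremum estimate in $\mathbf w$, is the right tool.
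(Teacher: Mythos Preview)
Your argument is correct in substance, and cleaner than the paper's. Two small remarks. First, the choice $p=d-\tfrac34$ is not an integer, while the proof of Lemma~\ref{LEMHEISENBERG2} (via Lemma~\ref{LEMHEISENBERG1}) uses integer powers $(z_1-1)^p$; this is harmless since $D_{\lceil p\rceil}\subset D_p$ (one has $|z+i|\geq 1$ on $\hd$), so $D_p$ is still dense, or you may simply take $p=d$. Second, $p=d-\tfrac34$ is not ``forced'': any $p\geq d-\tfrac34$ gives $\min(2p,2d-1)\geq 2d-\tfrac32$ in your two-centred estimate, and for $p\geq d-\tfrac12$ you even get the better decay $|\gamma|^{-(2d-1)}$.

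The paper proceeds differently. It takes $p$ large (governed by an auxiliary parameter $\veps\in(0,1)$ with $(2d-1)-\veps(d-1)\geq 2d-\tfrac32$ and $2\veps p\geq 2d-\tfrac32$), performs the same supremum bound in the $t$-variable, and then splits the $\bw$-integral according to whether $|\gamma+\bw|\leq|\gamma|^\veps$ or not: near $-\gamma$ it exploits $|\bw|\approx|\gamma|$ and the small volume $|\gamma|^{2\veps(d-1)}$ of the region; far from $-\gamma$ it bounds the second factor by $(1+|\gamma|^{2\veps})^{-2p}$ and integrates the rest. Your route---integrating out $t$ explicitly and invoking the standard two-centred bound $\int_{\RR^m}(|\mathbf x-\mathbf c|^2+1)^{-\nu}(|\mathbf x|^2+1)^{-\mu}\,dV\lesssim(1+|\mathbf c|)^{-2\min(\nu,\mu)}$ for $2\nu,2\mu>m$---is more conceptual and avoids the ad~hoc $\veps$-threshold, at the price of quoting that lemma. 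The paper's $\veps$-splitting is essentially a bare-hands proof of the special case of that lemma needed here.
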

\begin{proof}
We shall see that, provided $p$ is large enough, $D_p$ satisfies the conclusions of the lemma. Precisely, let $\veps\in(0,1)$ be such that $(2d-1)-\veps(d-1)\geq 2d-\frac 32$. We then adjust $p$ so that $2\veps p\geq 2d-\frac 32$ and we pick $F\in D_p$, $\gamma\in\CC^{d-1}$ with $|\gamma|\geq 1$. To simplify the notations, we shall write during this proof that $u\lesssim v$ provided there exists $C>0$ such that $u\leq Cv$ where $C$ does not depend on $\gamma$ (it may depend on $F$, $p$ or $\veps$). Then
\begin{eqnarray*}
\|C_{H_\gamma}F\|_{\H^2}^2&\lesssim&\int_{\bw \in\CC^{d-1}}\int_{x\in\RR}\frac{|F(x+i|\bw |^2+2i\langle \bw ,\gamma\rangle+i|\gamma|^2,\bw +\gamma)|^2}{|x+i|\bw |^2+i|^{2d}}dxd\bw \\
&\lesssim&\int_{\bw \in\CC^{d-1}}\int_{x\in\RR}\frac{1}{(1+|x|+|\bw |^2)^{2d}(|x-2\Im m\langle \bw ,\gamma\rangle|+|\gamma+\bw |^2+1)^p}dxd\bw \\
&\lesssim&\int_{\bw \in\CC^{d-1}}\int_{x\in\RR}\frac1{(1+|x|+|\bw |^2)^{2d}(1+|\gamma+\bw |^2)^{2p}}dxd\bw .
\end{eqnarray*}
We split the integral following the value of $|\gamma+\bw|$. Assume first that $|\gamma+\bw|\leq \gamma^\veps$. Then, writing
$$\frac1{(1+|x|+|\bw|^2)^{2d}}\times\frac 1{(1+|\gamma+\bw|^2)^{2p}}\lesssim\frac 1{(1+|x|+|\gamma|^2)^{2d}}$$
and observing that the ball $B(-\gamma,\gamma^\veps)$ in $\CC^{d-1}$ has volume comparable to $|\gamma|^{2\veps(d-1)}$, we get
$$\int_{|\bw+\gamma|\leq \gamma^\veps}\int_{x\in\RR}\frac1{(1+|x|+|\bw|^2)^{2d}(1+|\gamma+\bw|^2)^{2p}}dxd\bw
\lesssim\frac{|\gamma|^{2\veps(d-1)}}{(1+|\gamma|^2)^{2d-1}}\lesssim\frac1{|\gamma|^{2\left(2d-\frac 32\right)}}.$$
When $|\gamma+\bw|\geq |\gamma|^\veps$, we now write
$$\frac1{(1+|x|+|\bw|^2)^{2d}}\times\frac 1{(1+|\gamma+\bw|^2)^{2p}}\lesssim\frac1{(1+|x|+|\bw|^2)^{2d}}\times\frac{1}{(1+|\gamma|^{2\veps})^{2p}}.$$
We integrate this over $\CC^{d-1}\times\mathbb R$. Taking into account the inegalities satisfied by $\veps$ and $p$, we get the result of the lemma.
\end{proof}
\begin{proof}[Proof of Theorem \ref{THMHEISENBERG}]
Theorem \ref{THMHEISENBERG} follows immediately from Corollary \ref{COREXUNIFORM} and Lemma \ref{LEMHEISENBERG3}.
\end{proof}

\section{Obstructions to common hypercyclicity}
\subsection{A converse to the covering property}
In this section, we now show that, even if a group $(T_a)_{a\in\RR^d}$ has the uniform mixing property, we cannot expect that $\bigcap_{a\in\RR^d\backslash\{0\}}HC(T_{\lambda_n a})$ is non\-empty provided $(\lambda_n)$ grows too fast. We need a condition on the group saying that it is not too quickly mixing.
\begin{definition}
We say that an operator group $(T_a)_{a\in\RR^d}$ is locally separating if, for any $A>\delta>0$, there exists a non\-empty open set $U$ such that $T_a U\cap U=\varnothing$ provided $\delta\leq\|a\|\leq A$.
\end{definition}
It turns out, that under this condition, the existence of a common hypercyclic vector implies a covering property similar to that of Section \ref{SECCOVERING}. 

\begin{lemma}\label{LEMOBSTRUCTION1}
Let $K$ be a compact subset of $\mathbb R^d\backslash\{0\}$, let $(\lambda_n)_n$ be an increasing sequence of positive real numbers going to infinity and let $(T_a)_{a\in\RR^d}$ be a strongly continuous operator group on $X$ which is locally separating. Assume that $\bigcap_{a\in K}HC(T_{\lambda_n a})$ is nonempty. Then, for all $A>\delta>0$, for all $N\in\mathbb N$, we can find $M\geq N$ and a finite number $(y_{n,k})_{N\leq n\leq M,1\leq k\leq q_n}$ of elements of $K$ satisfying
\begin{itemize}
\item[\bf (A)]for any $n,m,k,j$, either $\|\lambda_n y_{n,k}-\lambda_m y_{m,j}\|<\delta$ 
or $\|\lambda_n y_{n,k}-\lambda_m y_{m,j}\|>A$.
\item[\bf (B)]$K\subset \bigcup_{n=N}^M \bigcup_{k=1}^{q_n}B\left(y_{n,k},\frac\delta{\lambda_n}\right).$
\end{itemize}
\end{lemma}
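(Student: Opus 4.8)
The plan is to read off the points $(y_{n,k})$ from the orbit of a common hypercyclic vector, deducing the separation condition \textbf{(A)} from the locally separating property and the covering condition \textbf{(B)} from a compactness argument.

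Fix $A>\delta>0$ and $N\in\NN$. Since $(T_a)$ is locally separating, I choose a nonempty open set $U$ with $T_bU\cap U=\varnothing$ whenever $\delta\le\|b\|\le A$, and I fix $f\in\bigcap_{a\in K}HC(T_{\lambda_n a})$. For a fixed $a\in K$ the orbit $\{T_{\lambda_n a}f;\ n\ge 1\}$ is dense, and since $X$ (being a nontrivial topological vector space) has no isolated point, deleting the finitely many iterates with $n<N$ leaves a dense set; hence there is an index $n_a\ge N$ with $T_{\lambda_{n_a}a}f\in U$. The map $a'\mapsto T_{\lambda_{n_a}a'}f$ is continuous (by strong continuity of the group, exactly as used in the proof of Theorem \ref{THMCOVERING1}), so there is an open neighbourhood $V_a$ of $a$, which I also shrink so that $V_a\subset B\bigl(a,\delta/\lambda_{n_a}\bigr)$, on which $T_{\lambda_{n_a}a'}f$ stays in $U$.

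By compactness of $K$, finitely many neighbourhoods $V_{a_1},\dots,V_{a_r}$ cover $K$. I relabel the centres $a_1,\dots,a_r$ as $(y_{n,k})$, grouping together those whose associated index $n_{a_i}$ equals $n$, and I set $M=\max_i n_{a_i}\ge N$; with this notation each chosen centre satisfies $T_{\lambda_n y_{n,k}}f\in U$. Condition \textbf{(B)} is then immediate, since each $B(y_{n,k},\delta/\lambda_n)\supset V_{a_i}$ and the $V_{a_i}$ already cover $K$.

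It remains to check \textbf{(A)}, which is the crux of the lemma. Given two centres $y_{n,k}$ and $y_{m,j}$, set $b=\lambda_n y_{n,k}-\lambda_m y_{m,j}$. Since $T_{\lambda_m y_{m,j}}f\in U$ and, by the group law,
$$T_b\bigl(T_{\lambda_m y_{m,j}}f\bigr)=T_{\lambda_n y_{n,k}}f\in U,$$
the intersection $T_bU\cap U$ is nonempty (it contains $T_{\lambda_n y_{n,k}}f$). By the defining property of $U$ this forbids $\delta\le\|b\|\le A$, so $\|b\|<\delta$ or $\|b\|>A$, which is exactly \textbf{(A)} (the degenerate case $(n,k)=(m,j)$ giving $b=0$). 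I expect the only genuinely delicate point to be the passage to indices $n_a\ge N$: one must know that a dense orbit in $X$ stays dense after removing its first finitely many terms, which rests on $X$ having no isolated points. Everything else is bookkeeping of the covering together with the one-line group-law computation above.
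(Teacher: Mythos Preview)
Your proof is correct and follows essentially the same approach as the paper: pick the separating open set $U$, use the common hypercyclic vector $f$ to find for each $a\in K$ an index $n_a\ge N$ with $T_{\lambda_{n_a}a}f\in U$, extract a finite subcover by compactness, and deduce \textbf{(A)} from the group law $T_b(T_{\lambda_m y_{m,j}}f)=T_{\lambda_n y_{n,k}}f$. The paper organizes the compactness step slightly differently (first defining the open sets $V_n=\{a\in K:\ T_{\lambda_n a}f\in U\}$ and applying compactness to the cover $\bigcup_{n\ge N}V_n$, then refining to balls), but this is only a cosmetic reordering; the substance is identical.
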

\begin{proof}
Let $f\in\bigcap_{a\in K}HC(T_{\lambda_n a})$ and let,  for  $n\geq N$, 
$$V_n=\left\{a\in K;\ T_{\lambda_n a}f\in U\right\}$$
where the open set $U$ is given by the local separation property.
Then each $V_n$ is an open subset of $K$ and $K$ is contained in $\bigcup_{n\geq N}V_n$. By the compactness of $K$, there exists $M\geq N$ such that $K\subset\bigcup_{n=N}^M V_n$. Let us now consider $x\in K$ and let $n(x)$ be the smallest integer $n\geq N$ such that $x\in V_n$. Then $K$ is contained in $\bigcup_{x\in K}B\left(x,\frac{\delta}{\lambda_{n(x)}}\right)$. By the compactness of $K$ again, we can extract a finite sequence $(y_{n,k})_{N\leq n\leq M}$ such that each $y_{n,k}$ belongs to $V_n$ and $K$ is contained in $\bigcup_{n,k}B\left(y_{n,k},\frac{\delta}{\lambda_n}\right)$. Moreover, since $T_{\lambda_n y_{n,k}}f\in U$ and $T_{\lambda_m y_{m,j}}f\in U$, it is plain that $T_{\lambda_n y_{n,k}-\lambda_m y_{m,j}}U\cap U\neq\varnothing$, which implies {\bf (A)} by the definition of $U$.
\end{proof}

When $(T_a)_{a\in\RR^d}$ has the uniform mixing property, we can close the circle and show that the converse of Theorem \ref{THMCOVERING1} is true!

\begin{theorem}\label{THMCNS}
Let $(T_a)_{a\in\mathbb R^d}$ be a strongly continous group on $X$ with the uniform mixing property. Let $K$ be a compact subset of $\mathbb R^d\backslash\{0\}$ and let $(\lambda_n)$ be an increasing sequence of positive real numbers. Then the following assertions are equivalent :
\begin{enumerate}
\item $\bigcap_{a\in K}HC(T_{\lambda_n a})$ is a residual subset of $X$.
\item For all $\veps>0$ and all $C>0$, for all $N\in\mathbb N$, we can find $M\geq N$ and a finite number $(x_{n,k})_{N\leq n\leq M,\ 1\leq k\leq p_n}$ of elements of $K$ satisfying
\begin{itemize}
\item[\bf (A)] For any $n,m,k,j$ with $(n,k)\neq (m,j)$, then 
$$\|\lambda_n x_{n,k}-\lambda_m x_{m,j}\|\geq C.$$
\item[\bf (B)] For any $x\in K$, there exist $n,m\in\{N,\dots,M\}$ and $k\in\{1,\dots,p_n\}$ such that 
$$\|\lambda_m x-\lambda_n x_{n,k}\|<\veps.$$
\end{itemize}
\end{enumerate}
\end{theorem}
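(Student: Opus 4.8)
The plan is to prove the two implications separately. The implication $(2)\Rightarrow(1)$ is exactly Theorem \ref{THMCOVERING1}, so all the content lies in the converse $(1)\Rightarrow(2)$. For that direction my strategy is to run the argument of Lemma \ref{LEMOBSTRUCTION1} and then to post-process its conclusion into conditions {\bf (A)} and {\bf (B)}. Since Lemma \ref{LEMOBSTRUCTION1} is stated for \emph{locally separating} groups, the first task is to check that the uniform mixing property already forces local separation; the second task is a purely combinatorial clustering argument. (Note first that if $(\lambda_n)$ were bounded it would converge, the maps $T_{\lambda_n a}$ would converge strongly, and no orbit could be dense in an infinite-dimensional $X$; so whenever $(1)$ holds we have $\lambda_n\to+\infty$, matching the hypothesis of Lemma \ref{LEMOBSTRUCTION1}.)

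First I would observe that a uniformly mixing group satisfies $T_c\neq I$ for every $c\neq 0$. Indeed, pick two disjoint nonempty open sets $U_0,V_0$ and let $C$ be the constant given by the uniform mixing property with $p=1$ (plain topological mixing) applied to $U_0,V_0$. If $T_c=I$, then $T_{Nc}=I$ for every $N$, and choosing $N$ with $N\|c\|\geq C$ yields $U_0\cap V_0=T_{Nc}U_0\cap V_0\neq\varnothing$, a contradiction. Next, given $A>\delta>0$, I would produce a \emph{single} wandering open set for the whole closed annulus $\{a:\delta\leq\|a\|\leq A\}$. Cover this compact annulus by finitely many balls centred at points $c_1,\dots,c_L$. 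Each $\ker(T_{c_l}-I)$ is a proper closed subspace, hence nowhere dense, so by the Baire category theorem (recall that $X$, being completely metrizable, is Baire) there is a vector $f_0$ fixed by none of the $T_{c_l}$. Setting $m=\min_l\|T_{c_l}f_0-f_0\|>0$, and using the joint continuity of $(a,g)\mapsto T_ag$ already invoked in the proof of Theorem \ref{THMCOVERING1}, together with its uniform continuity on the compact annulus, I would take the net fine enough and the radius $r$ small enough that $U=\{g:\|g-f_0\|<r\}$ satisfies $T_aU\cap U=\varnothing$ for every $a$ in the annulus. This is the locally separating set. I expect this to be the main obstacle: the delicate point is producing one open set that wanders under the entire continuum of parameters $a$, which is precisely what the Baire argument plus uniform continuity deliver.

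With local separation in hand, applying Lemma \ref{LEMOBSTRUCTION1} yields, for each $N$, points $(y_{n,k})_{N\leq n\leq M}$ in $K$ such that, writing $z_{n,k}=\lambda_n y_{n,k}$, any two of them satisfy $\|z_{n,k}-z_{m,j}\|<\delta$ or $>A$, and such that $K\subset\bigcup B(y_{n,k},\delta/\lambda_n)$. Given the target constants $\veps,C$ of assertion $(2)$, I would run the lemma with $A=C$ and $\delta=\min(\veps/2,C/3)$. The crucial remark is that, since $2\delta<A$, the dichotomy turns ``$\|z_{n,k}-z_{m,j}\|<\delta$'' into an equivalence relation: if $\|z-z'\|<\delta$ and $\|z'-z''\|<\delta$, then $\|z-z''\|<2\delta<A$, so the alternative ``$>A$'' is excluded and $\|z-z''\|<\delta$. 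I would then keep exactly one representative $x_{n,k}$ in each equivalence class. Distinct representatives are more than $A=C$ apart, which is {\bf (A)}. For {\bf (B)}, given $x\in K$ there is a covering ball with $\|\lambda_n x-z_{n,k}\|<\delta$; replacing $z_{n,k}$ by the representative $z_{n',k'}$ of its class costs at most another $\delta$, so $\|\lambda_n x-\lambda_{n'}x_{n',k'}\|<2\delta\leq\veps$, which is {\bf (B)} with the index $m=n$. Since the range $N\leq n\leq M$ is inherited from the lemma, we have exactly produced the data of assertion $(2)$, closing the circle.
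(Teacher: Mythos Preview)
Your overall architecture is right and the combinatorial clustering at the end is correct (and arguably cleaner than the paper's greedy selection in lexicographic order, though the two are equivalent). The gap is in your derivation of local separation.

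You correctly show $T_c\neq I$ for every $c\neq 0$. But then you choose a finite net $c_1,\dots,c_L$ in the annulus, pick $f_0\notin\bigcup_l\ker(T_{c_l}-I)$ by Baire, set $m=\min_l\|T_{c_l}f_0-f_0\|$, and write that you will ``take the net fine enough'' so that $\|T_af_0-T_{c_l}f_0\|$ is small compared to $m$ for $a$ near $c_l$. This is circular: the required fineness depends on the modulus of uniform continuity of $a\mapsto T_af_0$ \emph{and} on $m$, both of which depend on $f_0$, which in turn was chosen only after the net was fixed. Refining the net forces a new $f_0'$, a new (possibly smaller) $m'$, and a new modulus; nothing prevents this from iterating indefinitely. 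Knowing $T_{c_l}f_0\neq f_0$ at finitely many points does not by itself rule out $T_{a_0}f_0=f_0$ at some other $a_0$ in the annulus.

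The paper closes this gap differently: since $(T_a)$ is uniformly mixing, Theorem~\ref{THMMAININTRO} gives a vector $f$ that is hypercyclic for \emph{every} $T_a$, $a\neq 0$. For such an $f$ one has $T_af\neq f$ for all $a\neq 0$ automatically (else the $T_a$-orbit of $f$ would be a single point), so $a\mapsto\|T_af-f\|$ is a continuous positive function on the compact annulus, and a single compactness step produces the wandering neighbourhood $U$. If you want to salvage your Baire approach without invoking Theorem~\ref{THMMAININTRO}, you would need to show directly that $\bigcap_{\delta\le\|a\|\le A}\{f:T_af\neq f\}$ is nonempty; this set is open (the function $f\mapsto\min_{\delta\le\|a\|\le A}\|T_af-f\|$ is lower semicontinuous), but density requires an additional argument that your sketch does not supply.
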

\begin{proof}
That $(2)$ implies $(1)$ has already been proved in Theorem \ref{THMCOVERING1}. Assume now that (1) is satisfied. We first observe that the uniform mixing property implies the local separation property. Indeed, let $A>\delta>0$. By the uniform mixing property and Theorem \ref{THMMAININTRO}, $\bigcap_{a\in\RR^d\backslash\{0\}}HC(T_{a})$ is nonempty. Pick $f\in \bigcap_{a\in\RR^d\backslash\{0\}}HC(T_{a})$. Then $T_a f\neq f$ for any $a\in\RR^d\backslash\{0\}$; otherwise, the set $\{T_t f;\ t>0\}$ would be compact, hence nondense. By continuity of the map $f\mapsto T_a f$ and by compactness of the corona $\{a\in\RR^d;\ \delta\leq \|a\|\leq A\}$, there exists a neighbourhood $U$ of $f$ such that $T_a U\cap U=\varnothing$ for any $a\in\RR^d$ with $\delta\leq\|a\|\leq A$. Hence, we may apply Lemma \ref{LEMOBSTRUCTION1} and we are lead to show that the conditions of this lemma imply (2). Let $C,\veps>0$ and $N\in\mathbb N$. We apply Lemma \ref{LEMOBSTRUCTION1} with $A=C$ and $\delta=\veps/2$ to get $M$ and $(y_{n,k})$. We order $\NN^2$ using the lexicographical order. We construct by induction sequences $(y'_j)$ and $(\lambda'_j)$ by setting $y'_1=y_{N,1}$, $\lambda'_1=\lambda _N$ and, provided $y'_1,\dots,y'_j,\lambda'_1,\dots,\lambda'_j$ have been constructed with $y'_j=y_{n,k}$ and $\lambda'_j=\lambda_{n}$, we set
$$y'_{j+1}=\inf\big\{(m,\ell)\geq (n,k);\ \forall p\leq j,\ \|\lambda_m y_{m,\ell}-\lambda'_py'_p\|\geq A\big\}$$
and $\lambda'_{j+1}$ is the corresponding $\lambda_m$. If $(m,\ell)$ does not exist, then we stop the construction. We then rename $(y'_j)$ as $(x_{n,k})$: for a given $n$ in $\{N,\dots,M\}$, we set 
$\{x_{n,k}\}=\{y'_j;\ \lambda'_j=\lambda_n\}$.
The construction of the sequence $(x_{n,k})$ immediately implies that $(2)${\bf (A)} is satisfied. Moreover, let $x\in K$. We know that there exists $(n,k)$ with $\|\lambda_n x-\lambda_n y_{n,k}\|<\delta$. By construction, there exists $(m,\ell)$ with $m\leq n$ such that $\|\lambda_n y_{n,k}-\lambda_m x_{m,\ell}\|< A$. But since $x_{m,\ell}$ is itself an element of the sequence $(y_{p,u})$, we have $\|\lambda_n y_{n,k}-\lambda_m x_{m,\ell}\|<\delta$. This implies $\|\lambda_n x-\lambda_m x_{m,\ell}\|<2\delta=\veps$, so that $(2)${\bf(B)} is satisfied.

\end{proof}

\subsection{Obstructions}
We may now state the main theorem of this section, which is the desired extension of the result of Costakis, Tsirivas and Vlachou. We say that an interval in $\RR^d$ is nontrivial if it contains at least two points.
\begin{theorem}\label{THMOBSTRUCTION}
Let $I$ be a nontrival compact interval in $\RR^d\backslash\{0\}$ and let $(T_a)_{a\in\RR^d}$ be a strongly continuous operator group on $X$ which is locally separating. Let also $(\lambda_n)$ be an increasing sequence of positive real numbers such that $\liminf_{n}\lambda_{n+1}/\lambda_n>1$. Then $\bigcap_{a\in I}HC(T_{\lambda_n a})=\varnothing$.
\end{theorem}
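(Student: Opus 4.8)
The plan is to argue by contradiction, assuming $\bigcap_{a\in I}HC(T_{\lambda_n a})\neq\varnothing$, and to feed this into Lemma \ref{LEMOBSTRUCTION1} to produce a covering of $I$ which I will then show cannot exist once $(\lambda_n)$ is lacunary. Since $\liminf_n\lambda_{n+1}/\lambda_n>1$, I first fix $\rho\in(1,\liminf_n\lambda_{n+1}/\lambda_n)$ and an integer $N_0$ with $\lambda_{n+1}\geq\rho\lambda_n$ for $n\geq N_0$; in particular $\lambda_n\to+\infty$, so the hypotheses of Lemma \ref{LEMOBSTRUCTION1} are met. The free parameters $A>\delta>0$ will be chosen adversarially: I take $\delta$ arbitrary and then $A>2\sqrt2\,\delta$ large enough (depending only on $\rho$ and $\delta$) that $\sqrt{A^2-4\delta^2}-2\delta\geq\rho^{-1}\sqrt{A^2-4\delta^2}$, i.e. $\sqrt{A^2-4\delta^2}\geq 2\rho\delta/(\rho-1)$. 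This is exactly where the gap $\rho>1$ is essential: no finite $A$ works when $\rho=1$, which mirrors the fact that lacunary sequences fail property (SG).

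Applying Lemma \ref{LEMOBSTRUCTION1} to $K=I$ with these $A,\delta$ and some $N\geq N_0$, I obtain $M\geq N$ and points $(y_{n,k})$ in $I$ satisfying \textbf{(A)} and \textbf{(B)}. Since $A\geq2\delta$, the dichotomy in \textbf{(A)} makes ``being within $\delta$'' a transitive relation on the positions $\lambda_ny_{n,k}$, so at each fixed level $n$ these positions split into clusters of diameter $<\delta$ that are pairwise more than $A$ apart; after merging each cluster I may assume the level-$n$ positions are pairwise $>A$ apart, at the cost of enlarging the covering radius in \textbf{(B)} by a harmless constant. Next I pass to one dimension: $I$ being a nontrivial interval contains a genuine segment $\ell$, identified isometrically with $[0,L]$. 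A ball $B(y_{n,k},\delta/\lambda_n)$ meets $\ell$ only if its centre lies within $\delta/\lambda_n$ of $\ell$, in which case its trace on $\ell$ is an interval of length $\le2\delta/\lambda_n$; and two level-$n$ centres whose positions are $>A$ apart project onto $\ell$ at feet that are at least $\lambda_n^{-1}\sqrt{A^2-4\delta^2}$ apart (the perpendicular displacement being at most $2\delta/\lambda_n$). Reading everything on $\ell$, the part covered at level $n$ is thus a union of intervals of length $\le2\delta/\lambda_n$ whose centres are $\ge s_n:=\lambda_n^{-1}\sqrt{A^2-4\delta^2}$ apart, leaving between consecutive intervals a gap of length $w_n:=s_n-2\delta/\lambda_n=\lambda_n^{-1}\bigl(\sqrt{A^2-4\delta^2}-2\delta\bigr)>0$.

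The heart of the proof is then a nested-gaps construction producing a point of $\ell$ that escapes the entire covering. Starting from $J_{N-1}=\ell$ (of width $L\ge s_N$ once $N$ is large), I build closed intervals $\ell=J_{N-1}\supseteq J_N\supseteq\cdots\supseteq J_M$ where each $J_n$ is a gap of the level-$n$ trace contained in $J_{n-1}$. The induction works precisely because of the calibration of $A$: for $n\geq N+1$ the current interval has width $w_{n-1}=\lambda_{n-1}^{-1}\bigl(\sqrt{A^2-4\delta^2}-2\delta\bigr)$, the level-$n$ spacing is $s_n=\lambda_n^{-1}\sqrt{A^2-4\delta^2}$, and the requirement $w_{n-1}\geq s_n$ is exactly $\lambda_n/\lambda_{n-1}\geq\sqrt{A^2-4\delta^2}/\bigl(\sqrt{A^2-4\delta^2}-2\delta\bigr)$, which holds since $\lambda_n/\lambda_{n-1}\geq\rho$ and $A$ was chosen so that $\rho\bigl(\sqrt{A^2-4\delta^2}-2\delta\bigr)\geq\sqrt{A^2-4\delta^2}$. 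Hence $J_{n-1}$ is wider than the level-$n$ spacing and genuinely contains a level-$n$ gap $J_n$ (of width $w_n=w_{n-1}\,\lambda_{n-1}/\lambda_n>0$); by construction $J_n$ avoids the level-$n$ trace and lies inside all previous gaps, so it avoids the traces of all levels $N,\dots,n$. The final interval $J_M$ is therefore nonempty and disjoint from the union over $N\le n\le M$ of the level-$n$ traces, contradicting \textbf{(B)}, which asserts that these cover $\ell\subseteq I$. This contradiction yields $\bigcap_{a\in I}HC(T_{\lambda_n a})=\varnothing$.

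I expect the main obstacle to be the reduction to dimension one when $d\geq2$: the separation in \textbf{(A)} is a statement in $\RR^d$, and one must verify that projecting the level-$n$ centres onto $\ell$ still leaves their traces adequately spaced—this is the role of $\sqrt{A^2-4\delta^2}$ and is what forces $A$ to be taken noticeably larger than $2\delta$. A secondary, purely book-keeping difficulty is to treat clusters straddling the endpoints of a gap $J_{n-1}$ and the harmless constants lost in the merging step and in intersecting $d$-dimensional balls with $\ell$; none of these should disturb the underlying geometric mechanism, which rests solely on the lacunarity inequality $w_{n-1}\geq s_n$.
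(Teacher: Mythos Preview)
Your overall strategy—argue by contradiction, feed the hypothesis into Lemma \ref{LEMOBSTRUCTION1}, reduce to a one-dimensional segment, and run a nested-interval construction to produce an uncovered point—is the same as the paper's. The clustering step and the projection onto $\ell$ (with the Pythagorean correction $\sqrt{A^2-4\delta^2}$) are fine. However, the heart of your induction has a genuine gap.

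You assert that if $|J_{n-1}|\geq s_n$ (the level-$n$ centre spacing) then $J_{n-1}$ ``genuinely contains a level-$n$ gap $J_n$'' of width $w_n=s_n-2\delta/\lambda_n$. This is false. Take $|J_{n-1}|=s_n$ and place a single level-$n$ interval of width $2\delta/\lambda_n$ at the midpoint of $J_{n-1}$; nothing in Lemma \ref{LEMOBSTRUCTION1} forbids this configuration. Then $J_{n-1}$ minus that interval consists of two pieces, each of length $(s_n-2\delta/\lambda_n)/2=w_n/2$, and neither of the full level-$n$ gaps on the line lies inside $J_{n-1}$. If you try to continue with a piece of length $w_n/2$ instead of $w_n$, the inequality you need at the next step becomes $w_n/2\geq s_{n+1}$, i.e.\ $\lambda_{n+1}/\lambda_n\geq 2s_n/w_n$; letting $A\to\infty$ the right-hand side tends to $2$, so your calibration can only be closed when $\rho>2$. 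For $\rho\in(1,2]$ the induction breaks down. (You also have no upper bound on $|J_{n-1}|$, so you cannot even bound the number of level-$n$ intervals meeting it.)

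The paper repairs exactly this. It does \emph{not} track full gaps; instead it carries an interval $J$ of a \emph{prescribed small} length $1/\lambda_{N+mj}$, so that at \emph{each} of the next $m$ levels at most one interval can meet $J$ (the level-$n$ separation is shown to exceed $|J|$). Then Corollary \ref{CORINTERVALS} yields a surviving subinterval of length at least $\tfrac{1}{m+1}\bigl(|J|-\text{sum of removed lengths}\bigr)$, and the choice of $m$ with $q^m\geq 2(m+1)$ absorbs the factor $1/(m+1)$ into the ratio $\lambda_{N+m(j+1)}/\lambda_{N+mj}\geq q^m$. This grouping of $m$ levels is precisely what makes the argument work for every $\rho>1$ rather than only for $\rho>2$, and it is the missing idea in your proposal.
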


The proof of this theorem will depend heavily on the following easy lemma on intervals of $\mathbb R$.
\begin{lemma}\label{LEMINTERVALS}
Let $I,J_1,\dots,J_p$ be intervals of $\RR$. Then $I\backslash(J_1\cup J_2\cup\dots\cup J_p)$ is the reunion of $s$ disjoint intervals $E_1,\dots,E_s$ with $s\leq p+1$ and $|E_1|+\dots+|E_s|\geq |I|-|J_1|-\dots-|J_p|$.
\end{lemma}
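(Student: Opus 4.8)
The plan is to prove Lemma \ref{LEMINTERVALS} by induction on $p$, the number of intervals being removed. The base case $p=0$ is trivial: $I$ itself is the single interval $E_1$, with $s=1\leq 1$ and the length inequality holding with equality. For the inductive step, I would suppose the result is known for $p-1$ intervals, apply it to $I\backslash(J_1\cup\dots\cup J_{p-1})$ to obtain disjoint intervals $F_1,\dots,F_t$ with $t\leq p$ and total length at least $|I|-|J_1|-\dots-|J_{p-1}|$, and then analyze what happens when we additionally remove $J_p$.

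The key observation is a simple geometric fact about removing one interval: for any single interval $F$, the set $F\backslash J_p$ is the union of at most two intervals (the part of $F$ to the left of $J_p$ and the part to the right), and its total length is at least $|F|-|F\cap J_p|$. Crucially, since $J_p$ is itself an interval, it can meet the disjoint collection $F_1,\dots,F_t$ in such a way that it only ``splits'' one of the $F_i$ into two pieces while the others lose at most an endpoint-adjacent portion; more precisely, removing the single interval $J_p$ from a disjoint union of $t$ intervals produces at most $t+1$ intervals. I would justify this by noting that $J_p$ can create at most one new ``gap'' beyond those already present, since its two endpoints can fall inside at most two of the $F_i$, but the interval structure means the net increase in the number of pieces is at most one. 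Combining this with $t\leq p$ gives $s\leq t+1\leq p+1$.

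For the length estimate, I would track losses additively. Starting from $\sum_i |F_i|\geq |I|-\sum_{j<p}|J_j|$, removing $J_p$ decreases the total length by at most $\sum_i |F_i\cap J_p|\leq |J_p|$, since the $F_i$ are disjoint and so their intersections with $J_p$ have total length at most $|J_p|$. Therefore the resulting total length is at least $|I|-|J_1|-\dots-|J_{p-1}|-|J_p|$, which is exactly the desired bound.

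The main obstacle, though it is a mild one, is the careful bookkeeping of the interval count $s\leq p+1$: one must argue that intersecting a disjoint family of $t$ intervals with the complement of a single interval increases the number of components by at most one, rather than by up to $t$. The cleanest way I would handle this is to observe directly that $\RR\backslash J_p$ is the union of two rays (one or both possibly empty), say $(-\infty,\alpha)$ and $(\beta,+\infty)$; then $I\backslash(J_1\cup\dots\cup J_p) = \bigl(I\backslash(J_1\cup\dots\cup J_{p-1})\bigr)\cap\bigl((-\infty,\alpha)\cup(\beta,+\infty)\bigr)$, and counting components on each ray separately and adding shows the total is at most $t+1$. This avoids any delicate case analysis and makes both the count and the length bound transparent.
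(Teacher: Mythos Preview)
Your proof is correct and essentially the same as the paper's: both argue by induction on $p$ and analyze what happens when one additional interval is removed from a disjoint union of at most $p$ intervals. The only cosmetic difference is that the paper carries out an explicit case analysis on where the endpoints of $J_{p+1}$ fall among the $E_i$, whereas your decomposition of $\RR\backslash J_p$ into two rays is a tidy way to organize the same cases and makes the bound $s\leq t+1$ and the length estimate equally transparent.
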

\begin{proof}
We proceed by induction on $p$, the result being trivial for $p=0$. Let $E_1,\dots,E_s$, $s\leq p+1$ be disjoint intervals such that $I\backslash(J_1\cup J_2\cup\dots\cup J_p)=E_1\cup\dots\cup E_s$. Renumbering the intervals $E_i$ if necessary, we assume that $\max E_i\leq \min E_{i+1}$. We divide the proof into several cases:
\begin{itemize}
\item if there exists some $j$ such that $J_{p+1}\subset E_{j}$, then set $E_j\backslash J_{p+1}=:E'_j\cup E''_{j}$ where $E'_j$ and $E''_j$ are intervals. In that case,
$$I\backslash(J_1\cup J_2\cup\dots\cup J_p)=E_1\cup\dots\cup E_{j-1}\cup E'_j\cup E''_j\cup E_{j+1}\cup\dots \cup E_s.$$
\item if there exist $j<k$ such that $\min J_{p+1}\in E_j$ and $\max J_{p+1}\in E_k$, then $E_j\backslash J_{p+1}:=E'_j$, $E_k\backslash J_{p+1}:=E'_k$ where $E'_j,E'_k$ are intervals. In that case,
$$I\backslash(J_1\cup J_2\cup\dots\cup J_p)=E_1\cup\dots\cup E_{j-1}\cup E'_j\cup E'_k\cup E_{k+1}\cup\dots \cup E_s.$$
\item if $\min(J_{p+1})$ does not belong to $E_1\cup\dots\cup E_s$ or $\max(J_{p+1})$ does not belong to $E_1\cup\dots\cup E_s$, the proof is similar and even simpler.
\end{itemize}
\end{proof}
We will use Lemma \ref{LEMINTERVALS} under the form of the following corollary.
\begin{corollary}\label{CORINTERVALS}
Let $I,J_1,\dots,J_p$ be intervals of $\RR$ such that $|I|> \sum_{j=1}^p |J_p|$. Then $I\backslash(J_1\cup J_2\cup\dots\cup J_p)$ contains an interval of length at least $\frac 1{p+1}\left(|I|- \sum_{j=1}^p |J_p|\right).$
\end{corollary}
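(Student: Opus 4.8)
The plan is to read this off immediately from Lemma \ref{LEMINTERVALS}, which already does all the combinatorial work. First I would apply that lemma to the same intervals $I, J_1, \dots, J_p$ to write
$$I\setminus(J_1\cup J_2\cup\dots\cup J_p)=E_1\cup\dots\cup E_s,$$
a disjoint union of intervals with $s\leq p+1$ and total length $\sum_{i=1}^s |E_i|\geq |I|-\sum_{j=1}^p |J_j|$.

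Set $L:=|I|-\sum_{j=1}^p |J_j|$. The hypothesis of the corollary is precisely that $L>0$, so the union above is nonempty ($s\geq 1$) and the quantity we are bounding is positive. At this point I would invoke a one-line averaging argument: among the $s$ intervals $E_1,\dots,E_s$ whose lengths sum to at least $L$, the longest one must have length at least $L/s$. Since $s\leq p+1$ and $L\geq 0$, this gives
$$\max_{1\leq i\leq s}|E_i|\geq\frac{L}{s}\geq\frac{L}{p+1}=\frac1{p+1}\left(|I|-\sum_{j=1}^p |J_j|\right),$$
and this longest $E_i$ is an interval contained in $I\setminus(J_1\cup\dots\cup J_p)$, which is exactly the required conclusion.

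I do not expect any genuine obstacle here, since the structural content is entirely contained in Lemma \ref{LEMINTERVALS}; the corollary is just a pigeonhole refinement. The only points requiring a moment of care are that one must use $L>0$ to ensure the collection is nonempty so that a longest interval exists, and that the direction of the inequality $L/s\geq L/(p+1)$ relies on both $s\leq p+1$ and $L\geq 0$.
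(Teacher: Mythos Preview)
Your proposal is correct and is exactly the intended argument: the paper states Corollary \ref{CORINTERVALS} without proof as an immediate consequence of Lemma \ref{LEMINTERVALS}, and the pigeonhole step you spell out is the obvious way to extract it. Your remarks about needing $L>0$ for nonemptiness and $L\geq 0$ for the inequality $L/s\geq L/(p+1)$ are the only small points, and you handle them correctly.
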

We are now ready for the 
\begin{proof}[Proof of Theorem \ref{THMOBSTRUCTION}]
Let $q>1$ be such that, for any $j\geq 1$, $\frac{\lambda_{j+1}}{\lambda_j}\geq q$. Let us observe that if $(\mu_n)$ is an increasing sequence of positive real numbers such that $\{\lambda_n;\ n\geq 1\}$ is contained in $\{\mu_n;\ n\geq 1\}$, then $\bigcap_{a\in I}HC(T_{\lambda_n a})\subset \bigcap_{a\in I}HC(T_{\mu_n a})$. Then, adding some terms to the sequence $(\lambda_n)$ if necessary, we may always assume that, for any $j\geq 1$, 
$$q\leq \frac{\lambda_{j+1}}{\lambda_j}\leq q^2.$$
We argue by contradiction and we assume that $\bigcap_{a\in I}HC(T_{\lambda_n a})\neq\varnothing$. To simplify the notations, we shall assume that $I\subset \RR$. Let $m\geq 1$ be such that $q^m\geq 2(m+1)$ and let $\delta,A>0$ and $N\in\NN$ be such that
$$\left\{
\begin{array}{l}
\displaystyle \delta\left(1+\frac 1q+\dots+\frac 1{q^{m-1}}\right)<\frac 18\\[4mm]
\displaystyle \frac{A-4\delta}{q^{2(m-1)}}>1\\[4mm]
\displaystyle \frac 1{\lambda_N}\leq |I|.
\end{array}\right.$$
By Lemma \ref{LEMOBSTRUCTION1}, there exist $M\geq N$ and a finite sequence $(y_{n,k})_{N\leq n\leq M,\ 1\leq k\leq q_n}$ such that {\bf (A)} and {\bf (B)} are satisfied. For $n>M$, we set $q_n=0$. Let $n\geq N$ be fixed. We set $u_n=0$ if $q_n=0$. Otherwise, we construct intervals $J_{n,k}$ and $J'_{n,k}$ as follows. We set $k_1=1$ and 
$$J_{n,1}=\left(y_{n,k_1}-\frac{2\delta}{\lambda_n},y_{n,k_1}+\frac{2\delta}{\lambda_n}\right),\ J'_{n,1}=\left(y_{n,k_1}-\frac{\delta}{\lambda_n},y_{n,k_1}+\frac{\delta}{\lambda_n}\right).$$
Let $k_2$ be the first integer $k>k_1$ such that $y_{n,k}\notin J'_{n,1}$. Then by {\bf (A)} $|\lambda_n(y_{n,k_2}-y_{n,k_1})|>A$. We then set $J_{n,2}=\left(y_{n,k_2}-\frac{2\delta}{\lambda_n},y_{n,k_2}+\frac{2\delta}{\lambda_n}\right)$ and $J'_{n,2}=\left(y_{n,k_2}-\frac{\delta}{\lambda_n},y_{n,k_2}+\frac{\delta}{\lambda_n}\right)$. 
We continue this process a finite number of times (until this is impossible). At step $s$, we require that $k_s$ is the smallest integer $k>k_{s-1}$ such that $y_{n,k}\notin J'_{n,j}$, $1\leq j\leq s-1$. We denote by $u_n$ the numbers of intervals constructed in this way.

For any $n\geq N$, we have thus constructed a finite number of intervals $(J_{n,j})_{1\leq j\leq u_n}$ of length $4\delta/\lambda_n$ such that 
$$\textrm{dist}(J_{n,j},J_{n,k})>\frac{A-4\delta}{\lambda_n}\textrm{ provided }j\neq k$$
\begin{eqnarray}\label{EQOBSTRUCTION1}
\bigcup_{k=1}^{q_n} \left(y_{n,k}-\frac\delta{\lambda_n},y_{n,k}+\frac{\delta}{\lambda_n}\right)\subset J_{n,1}\cup\dots\cup J_{n,u_n}.
\end{eqnarray}
We finally set $I_N=I$ and, for $j\geq 0$, $I_{N+j+1}=I_{N+j}\backslash \bigcup_{k=1}^{u_{N+j}}J_{N+j,k}$. By {\bf (B)} and (\ref{EQOBSTRUCTION1}), $I_{M+1}$ is empty. But we will contradict this fact by showing by induction on $j\geq 1$ that $I_{N+mj}$ contains an interval of length at least $\frac 1{\lambda_{N+mj}}$. This property is true for $m=0$ and assume that it is true at rank $j$. Let $J\subset I_{N+mj}$ be an interval of length $\frac 1{\lambda_{N+mj}}$. We claim that, for any $n$ in $\{N+mj,\dots,N+m(j+1)-1\}$, at most one interval $J_{n,k}$ can intersect $J$. Indeed, for $k\neq \ell$, 
$$\textrm{dist}(J_{n,k},J_{n,\ell})>\frac{A-4\delta}{\lambda_n}=\frac{A-4\delta}{\lambda_{N+mj}}\times\frac{\lambda_{N+mj}}{\lambda_n}\geq \frac{A-4\delta}{\lambda_{N+mj}}\times\frac 1{q^{2(m-1)}}> |J|.$$
For $n\in\{N+mj,\dots,N+m(j+1)-1\}$, let $K_n=\varnothing$ if no interval $J_{n,k}$ intersect $J$ and let $K_n=J_{n,k}$ if $J_{n,k}$ is the unique interval $J_{n,\ell}$ intersecting $J$. Then 
$$I_{N+m(j+1)}\supset J\backslash (K_{N+mj}\cup\dots\cup K_{N+m(j+1)-1}).$$
We apply Corollary \ref{CORINTERVALS}: $I_{N+m(j+1)}$ contains an interval of length greater than 
\begin{eqnarray*}
\frac{1}{m+1}\left(|J|-|K_{N+mj}|-\dots-|K_{n+m(j+1)-1}|\right)&\geq&\frac{1}{m+1}\left(
\frac 1{\lambda_{N+mj}}-\frac{4\delta}{\lambda_{N+mj}}-\dots-\frac{4\delta}{\lambda_{N+m(j+1)-1}}\right)\\
&\geq&\frac{1}{(m+1)\lambda_{N+mj}}\left(1-4\delta\left(1+\frac 1q+\dots+\frac1{q^{m-1}}\right)\right)\\
&\geq&\frac{1}{2(m+1)\lambda_{N+mj}}\\
&\geq&\frac{1}{\lambda_{N+m(j+1)}}.
\end{eqnarray*}
This concludes the proof of Theorem \ref{THMOBSTRUCTION}.
\end{proof}
\begin{remark}
Theorem \ref{THMOBSTRUCTION} remains true if we replace the condition $\liminf {\lambda_{n+1}}/{\lambda_n}>1$ by the following one: there exists $p\geq 1$ such that $\liminf {\lambda_{n+p}}/{\lambda_n}>1$.
\end{remark}

Since the uniform mixing property implies the local separation property, we get a kind of duality for an operator group with the uniform mixing property. If the sequence $(\lambda_n)$ does not increase too quickly, then $\bigcap_{a\in \mathbb R^d\backslash\{0\}}HC(T_{\lambda_n a})$ is nonempty. If the sequence $(\lambda_n)$ does increase very quickly, then even for any nontrivial compact interval $I$ in $\RR^d\backslash\{0\}$,  $\bigcap_{a\in I}HC(T_{\lambda_n a})=\varnothing$.

\smallskip

When the compact interval $I$ is ``radial'' (namely, when it is contained in a line passing through $0$), then we can dispense with the local separation property in the statement of Theorem \ref{THMOBSTRUCTION}.
\begin{corollary}
Let $I$ be a nontrivial compact interval in $\mathbb R^d\backslash\{0\}$ which is contained in a line passing through $0$ and let $(T_a)_{a\in\mathbb R^d}$ be any strongly continuous operator group on $X$.  Let also $(\lambda_n)$ be an increasing sequence of positive real numbers such that $\liminf_{n}\lambda_{n+1}/\lambda_n>1$. Then the set $\bigcap_{a\in I}HC(T_{\lambda_n a})$ is empty.
\end{corollary}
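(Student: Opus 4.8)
The plan is to reduce the statement to a one-dimensional instance of Theorem \ref{THMOBSTRUCTION} by restricting the group to the line carrying $I$; the only missing ingredient will be the local separation property, which I intend to extract for free from the assumed existence of a common hypercyclic vector. Since $I$ is a nontrivial compact interval contained in a line through the origin but avoiding it, I may write $I=\{tv_0;\ t\in[\alpha,\beta]\}$ with $0<\alpha<\beta$ (replacing $v_0$ by $-v_0$ if necessary). Setting $S_s:=T_{sv_0}$ defines a strongly continuous one-parameter group $(S_s)_{s\in\RR}$ on $X$, and since $T_{\lambda_n(tv_0)}=S_{\lambda_n t}$ we have
$$\bigcap_{a\in I}HC(T_{\lambda_n a})=\bigcap_{t\in[\alpha,\beta]}HC\big((S_{\lambda_n t})_n\big).$$
Arguing by contradiction, I assume this set contains a vector $f$.

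The heart of the argument is to show that $(S_s)_{s\in\RR}$ is locally separating. First I would check that $S_sf\neq f$ for every $s\neq 0$. Indeed, if $S_{s_0}f=f$ for some $s_0>0$, then $S_{s+s_0}f=S_sS_{s_0}f=S_sf$ for all $s$, so $s\mapsto S_sf$ is periodic and $\{S_sf;\ s\in\RR\}$ is a compact, hence closed, subset of $X$; but it contains $\{S_{\lambda_n\alpha}f;\ n\geq 1\}$, which is dense because $f\in HC\big((S_{\lambda_n\alpha})_n\big)$, and this would force $X$ itself to be compact, which is absurd. Now fix $A>\delta>0$. By strong continuity and the uniform boundedness principle the map $(s,x)\mapsto S_sx$ is jointly continuous, and $\{s\in\RR;\ \delta\leq|s|\leq A\}$ is compact; exactly as in the proof of Theorem \ref{THMCNS}, the fact that $S_sf\neq f$ on this corona produces a neighbourhood $U$ of $f$ with $S_sU\cap U=\varnothing$ whenever $\delta\leq|s|\leq A$. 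Since $A$ and $\delta$ were arbitrary, $(S_s)_{s\in\RR}$ is locally separating.

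Once this is established, Theorem \ref{THMOBSTRUCTION} applies to the strongly continuous, locally separating one-parameter group $(S_s)_{s\in\RR}$, to the nontrivial compact interval $[\alpha,\beta]\subset\RR\backslash\{0\}$, and to the sequence $(\lambda_n)$ (whose hypothesis $\liminf_n\lambda_{n+1}/\lambda_n>1$ is unaffected by the reduction). It yields $\bigcap_{t\in[\alpha,\beta]}HC\big((S_{\lambda_n t})_n\big)=\varnothing$, contradicting the existence of $f$ and proving the corollary.

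I expect the only delicate point to be the derivation of local separation, and the essential observation is one that is specific to the radial case: every operator $T_{\lambda_n a}$ with $a\in I$, and every difference $\lambda_n a-\lambda_m a'$ with $a,a'\in I$, remains inside the single subgroup $(S_s)_{s\in\RR}$, so that separation along this one line is all that the proof of Lemma \ref{LEMOBSTRUCTION1} ever requires; and such separation is automatic, because the orbit of a hypercyclic vector under a one-parameter group can never be periodic. For a general, non-radial interval the relevant differences would spread across all of $\RR^d$ and this shortcut would collapse, which is precisely why Theorem \ref{THMOBSTRUCTION} must impose the local separation property as a standing hypothesis on the group.
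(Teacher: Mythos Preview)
Your proof is correct and follows essentially the same route as the paper: restrict to the one-parameter group $(S_s)_{s\in\RR}$ along the line carrying $I$, observe that a vector in the (assumed nonempty) intersection has dense orbit under this group so that $S_sf\neq f$ for $s\neq 0$, derive local separation by the compactness argument, and invoke Theorem \ref{THMOBSTRUCTION}. The only cosmetic difference is that you frame the local separation step explicitly as part of a contradiction argument, whereas the paper phrases it as ``a hypercyclic group defined on $\RR$ has automatically the local separation property''; the underlying reasoning is identical.
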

Of course, this corollary immediately implies Theorem \ref{THMINTROOBSTRUCTION}.
\begin{proof}
Let $b\in\mathbb R^d\backslash\{0\}$ and $\kappa>1$ be such that $I=[b,\kappa b]$. Then consider the group $(S_t)_{t\in\RR}$ defined by $S_t=T_{\kappa b}$. Then $\bigcap_{\mu\in [1,\kappa]}HC(S_{\lambda_n \mu})=\bigcap_{a\in I}HC(T_{\lambda_n a})$.  Now, a hypercyclic group defined on $\mathbb R$ has automatically the local separation property. Indeed, pick $f\in X$ such that $\{S_t f;\ t\in\mathbb R\}$ is dense in $X$. Then, for any $t\in [\delta,A]\cup[-A,-\delta]$, $S_tf\neq f$; otherwise $\{S_t f;\ t\in\mathbb R\}$ would be compact. It is then easy to find by a compactness argument a neighbourhood $U$ of $f$ such that $S_t U\cap U=\varnothing$ for any $t\in\mathbb R$ with $\delta\leq |t|\leq A$.
\end{proof}

\begin{question}
Does Theorem \ref{THMOBSTRUCTION} remains true if we do not assume that $(T_a)_{a\in\RR^d}$ is locally separating?
\end{question}

\begin{question}
Let $K$ be a compact subset of $(0,+\infty)$ and let $(\lambda_n)=(q^n)$, $q>1$. Assume that $\bigcap_{a\in K}HC(T_{\lambda_n a})$ is nonempty. Can we link $q$ and the Hausdorff dimension of $K$?
\end{question}

\section{Common frequent hypercyclicity}\label{SECTIONFHC}
\subsection{A covering of the unit sphere with control}
In this section, we study the existence of common frequently hypercyclic vectors for operator groups, proving in particular Theorem \ref{THMFHCTRANSLATION}. Our first main argument is a way to divide sequences of integers like in Corollary \ref{CORTECHNICALSEQ}, but now with a control on the growth of the function $\phi$ (in order to obtain {\it frequent} hypercyclicity). We need to introduce a definition.
\begin{definition}
We say that an increasing sequence $(\lambda_n)$ has property (FHCSG) if 
\begin{itemize}
\item for any $n\geq 1$, $\lambda_{n+1}-\lambda_n\geq 1$;
\item for any $C>0$, there exists $p\in\mathbb N$ such that, for any $N\geq 1$, 
$$\sum_{n=N+1}^{N+p}\frac{1}{\lambda_n}\geq\frac C{\lambda_N}.$$
\end{itemize}
\end{definition}
The main difference with property (SG) is that the number of terms of the sum appearing in the last displayed inequality does not depend on $N$.

 When a sequence satisfies property (FHCSG), we have the following improved version of Corollary \ref{CORTECHNICALSEQ}.
\begin{lemma}\label{LEMTECHNICALSEQFHC}
Let $(\lambda_n)$ be a sequence having property (FHCSG). Then for all $d\geq 1$ and all $A>0$, there exists $Q\in\mathbb N$ such that, for any $N\in\NN$,  we can find $s_1\in\mathbb N$, subsets $E_r$ of $\NN^{r-1}$ for $r=2,\dots,d+1$, maps $s_r:E_r\to\mathbb N$ for $r=2,\dots,d$ and a one-to-one map $\phi:E_{d+1}\to \mathbb N$ such that
\begin{itemize}
\item for any $r=2,\dots,d+1$, 
$$E_r=\big\{(k_1,\dots,k_{r-1});\ k_1<s_1,\ k_2<s_2(k_1),\dots,k_{r-1}\leq s_{r-1}(k_1,\dots,k_{r-2})\big\}.$$
\item for any $r=1,\dots,d$, for any $(k_1,\dots,k_{r-1})\in E_r$,
$$\sum_{j=1}^{s_r(k_1,\dots,k_{r-1})}\frac 1{\lambda_{\phi(k_1,\dots,k_{r-1},j,0,\dots,0)}}\geq \frac A{\lambda_{\phi(k_1,\dots,k_{r-1},0,\dots,0)}}.$$
\item For any $(k_1,\dots,k_d)$, $(l_1,\dots,l_d)\in E_{d+1}$ with $(k_1,\dots,k_d)\neq (l_1,\dots,l_d)$, then 
$$|\phi(k_1,\dots,k_d)-\phi(l_1,\dots,l_d)|\geq A$$
$$N\leq \phi(k_1,\dots,k_d)\leq N+Q-1.$$
\end{itemize}
\end{lemma}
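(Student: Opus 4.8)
The plan is to run the inductive scheme of Lemma~\ref{LEMTECHNICALSEQ}, while superimposing the two features that are genuinely new here: the separation $|\phi(\mathbf{k})-\phi(\mathbf{l})|\ge A$ of distinct leaves, and, above all, a bound $Q$ on the length of the window carrying $\phi$ that is \emph{independent of} $N$. I would dispose of the separation at the outset by a thinning trick. Set $a=\lceil A\rceil$ and $\mu_m=\lambda_{am}$. Since $\mu_{m+1}-\mu_m\ge a\ge 1$ and, for $n\ge am$, $1/\lambda_{am}\ge \frac1a\sum_{n=am}^{a(m+1)-1}1/\lambda_n$, one gets $\sum_{m=M+1}^{M+p'}1/\mu_m\ge\frac1a\sum_{n=a(M+1)}^{a(M+p'+1)-1}1/\lambda_n$; applying property (FHCSG) of $(\lambda_n)$ with a slightly enlarged constant (to absorb the $a-1$ initial terms that are missing) shows that $(\mu_m)$ again has property (FHCSG), with a number of terms $p'$ independent of $M$. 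If I can build for $(\mu_m)$ a map $\psi$ with the nesting and harmonic properties inside a uniform window $[M,M+Q'-1]$, then putting $\phi=a\psi$ and, for given $N$, choosing $M=\lceil N/a\rceil$ yields a $\phi$ taking values in $[N,N+aQ'-1]$ that are pairwise at distance $\ge a\ge A$ (because $\psi$ is one-to-one and integer-valued); the harmonic inequalities transfer verbatim since $\mu_{\psi(\cdot)}=\lambda_{\phi(\cdot)}$. Thus $Q=aQ'$ works and everything reduces to the windowed statement for $(\mu_m)$.

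I would prove the windowed statement by induction on $d$, paralleling Lemma~\ref{LEMTECHNICALSEQ}, the decisive new input being the defining feature of property (FHCSG): to accumulate a mass $C/\mu_M$ one never needs more than $p(C)$ consecutive terms, with $p(C)$ \emph{independent of} $M$. For $d=1$ one takes $\psi(k)=M+k$, $s_1=p(A)$, so that $\sum_{j=1}^{s_1}1/\mu_{\psi(j)}\ge A/\mu_M$, giving $Q'(1,A)=p(A)+1$. For the step I build the first-coordinate corners exactly as in Lemma~\ref{LEMTECHNICALSEQ}: $\psi(0,\mathbf{0})=M$, and $\psi(j+1,\mathbf{0})$ is the first index at which the mass accumulated since $\psi(j,\mathbf{0})$ reaches $B'(d-1,A)/\mu_{\psi(j,\mathbf{0})}$, stopping once the mass since $M$ passes $((A+1)B'(d-1,A)+2)/\mu_M$; inside each block I invoke the induction hypothesis at base point $\psi(j,\mathbf{0})$. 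The top-level harmonic estimate $\sum_{j=1}^{s_1}1/\mu_{\psi(j,\mathbf{0})}\ge A/\mu_M$ is then obtained exactly as in Lemma~\ref{LEMTECHNICALSEQ}, using the two-sided bound $B'(d-1,A)/\mu_{\psi(j,\mathbf{0})}\le(\text{block }j\text{ mass})\le(B'(d-1,A)+1)/\mu_{\psi(j,\mathbf{0})}$.

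The step I expect to be the real obstacle is converting these mass thresholds into a window bound uniform in $M$, and this is exactly where property (FHCSG) beats the weaker property (SG). Two quantities must be controlled. Each block has width at most $p\big(B'(d-1,A)\big)$ directly by property (FHCSG). The number $s_1$ of blocks is bounded because at the penultimate corner the accumulated mass is still below the stopping threshold, so the contrapositive of property (FHCSG) places the penultimate corner within $p\big((A+1)B'(d-1,A)+2\big)$ of $M$; one more block then bounds $\psi(s_1,\mathbf{0})-M$ by a constant, which I take to be $Q'(d,A)$. A last subtlety is \emph{disjoint nesting}: the $(d-1)$-construction placed in block $j$ occupies $Q'(d-1,A)$ terms from $\psi(j,\mathbf{0})$ on, so each block must be at least that wide. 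But a block of mass $\ge B'(d-1,A)/\mu_{\psi(j,\mathbf{0})}$ contains at least $B'(d-1,A)$ terms (each term being $\le 1/\mu_{\psi(j,\mathbf{0})}$), so it suffices to carry the requirement $B'(d-1,A)\ge Q'(d-1,A)$ through the induction, i.e. to define $B'(d,A)=\max\big((A+2)B'(d-1,A)+3,\ Q'(d,A)\big)$ (and $B'(1,A)=\max(A,p(A)+1)$). This keeps all blocks wide enough to house the lower-dimensional trees without overlap, so that $\psi$ is one-to-one, while leaving the harmonic estimates untouched since no block is ever enlarged beyond its natural mass-defined length.
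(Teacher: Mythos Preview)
Your proposal is correct, but it takes a somewhat different route from the paper. The paper proceeds as follows: it first extracts from property (FHCSG) the ratio bound $\lambda_{n+1}\le\rho\lambda_n$ (taking $C=1$ gives $p/\lambda_{N+1}\ge 1/\lambda_N$), then fixes $\kappa>A$ and, for each $N$, considers the $N$-dependent thinning $\mu_n=\lambda_{N+n\kappa}$. Using the ratio bound it shows that $(\mu_n)_{n\ge0}$ satisfies the hypothesis of Lemma~\ref{LEMTECHNICALSEQ} with a value of $s$ that does \emph{not} depend on $N$; that lemma is then applied as a black box, and one sets $\psi(\mathbf{k})=N+\kappa\,\phi(\mathbf{k})$, which lands in $[N,N+\kappa s]$ and has gaps $\ge\kappa>A$. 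Your version instead does a global thinning $\mu_m=\lambda_{am}$, checks that it still enjoys (FHCSG), and then reruns the induction of Lemma~\ref{LEMTECHNICALSEQ} while carrying along a uniform window bound $Q'(d,A)$ (and enlarging the block-mass constants to $B'(d,A)\ge Q'(d,A)$ so that each block can absorb its nested subtree). The paper's argument is shorter and more modular, since it avoids redoing the induction and lets Lemma~\ref{LEMTECHNICALSEQ} do the heavy lifting; your argument is more explicit about why the window is uniform and dispenses with the ratio bound, relying only on monotonicity to compare $1/\lambda_{am}$ with the nearby block of $1/\lambda_n$'s. Both extract the same key point: under (FHCSG), the number of terms needed to reach any prescribed harmonic mass is bounded independently of the starting index, which is exactly what turns the abstract tree of Lemma~\ref{LEMTECHNICALSEQ} into one that fits in a fixed window.
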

\begin{proof}
We first observe that there exists $\rho>1$ such that, for any $n\in\NN$, $\lambda_{n+1}\leq\rho\lambda_n$. Indeed, there exists $p\in\NN$ such that, for any $N\in\NN$, 
$$\sum_{n=N+1}^{N+p} \frac 1{\lambda_n}\geq\frac 1{\lambda_N}.$$
This yields $\frac p{\lambda_{N+1}}\geq\frac 1{\lambda_N}$. Let now $d\geq 1$ and $A>0$. Let $\kappa\in\NN$ with $\kappa>A$ and let $B:=B(d,A)$ be given by Lemma \ref{LEMTECHNICALSEQ}. We apply property (FHCSG) with $C>0$ such that 
$$\frac C\kappa\left(1+\dots+\frac1{\rho^{\kappa-1}}\right)\geq B(d,A).$$
This gives some $p\in\NN$. Let $N\in\NN$ and define $(\mu_n)$ by setting $\mu_n=\lambda_{N+n\kappa}$. Let $s>0$ be such that $s\kappa\geq p$. Then for any $r\geq 0$, 
\begin{eqnarray*}
\sum_{n=r+1}^{r+s}\frac1{\mu_n}&\geq&\frac 1\kappa\left(1+\dots+\frac 1{\rho^{\kappa-1}}\right)\sum_{n=r+1}^{r+s}\left(\frac 1{\lambda_{N+n\kappa}}+\frac{1}{\lambda_{N+n\kappa-1}}+\dots+\frac 1{\lambda_{N+n\kappa-(\kappa-1)}}\right)\\
&\geq&\frac 1\kappa\left(1+\dots+\frac 1{\rho^{\kappa-1}}\right)\sum_{u=1}^{p}\frac{1}{\lambda_{N+r\kappa+u}}\\
&\geq&\frac 1\kappa\left(1+\dots+\frac 1{\rho^{\kappa-1}}\right)\times\frac{C}{\lambda_{N+r\kappa}}\\
&\geq&\frac{B(d,A)}{\mu_r}.
\end{eqnarray*}
Hence, we may apply Lemma \ref{LEMTECHNICALSEQ} to the sequence $(\mu_n)$ and to $s$. 
We get $s_r$, $E_r$ and $\phi$. We finally set $\psi(k_1,\dots,k_d)=N+\kappa\phi(k_1,\dots,k_d)$. It remains to observe that $N\leq \psi(k_1,\dots,k_d)\leq N+\kappa s$ to get that the conclusions of Lemma \ref{LEMTECHNICALSEQFHC} are satisfied, with $\psi$ instead of $\phi$ and $Q=\kappa s+1$.
\end{proof}
We deduce from this lemma a covering lemma for the unit sphere of $\RR^d$ with control of the size of the covering.
\begin{lemma}\label{LEMCOVERINGFHC}
Let $(\lambda_n)$ be an increasing sequence of positive real numbers satisfying property (FHCSG). Then, for any $d\geq 1$, for any $\delta>0$, for any $B>0$, there exists $q>0$  such that, for any $N\in\NN$, there exists a finite number of elements $(x_{n,k})_{n=N,\dots,N+q-1}$ of $S^{d-1}$ such that
\begin{itemize}
\item[\bf(A)] $S^{d-1}\subset\bigcup_{n=N}^{N+q-1}\bigcup_{k}B\left(x_{n,k},\frac{\delta}{\lambda_n}\right)$;
\item[\bf(B)] If $(n,k)\neq (m,j)$, then $\|\lambda_n x_{n,k}-\lambda_m x_{m,j}\|\geq B$.
\end{itemize}
\end{lemma}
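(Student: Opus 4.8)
The plan is to mimic the covering construction carried out in the proof of Theorem~\ref{THMMAINCOMMON}, replacing the sequence-splitting Corollary~\ref{CORTECHNICALSEQ} by its quantitative refinement Lemma~\ref{LEMTECHNICALSEQFHC}, whose decisive feature is that \emph{all} the indices it produces lie in a window $[N,N+Q-1]$ of length $Q$ independent of $N$. This is exactly what will force $q$ to be independent of $N$. Two special features of the unit sphere make the argument go through with this weaker, purely additive control on the $\lambda_n$'s. First, for $x,y\in S^{d-1}$ one has
$$\|\lambda_n x-\lambda_m y\|\geq\big|\,\lambda_n-\lambda_m\,\big|,$$
so two centres lying on spheres of different radii are automatically separated. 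Second, property (FHCSG) gives $\lambda_{n+1}-\lambda_n\geq 1$, whence $|\lambda_n-\lambda_m|\geq|n-m|$. This is precisely the mechanism announced in the remark following Theorem~\ref{THMMAINCOMMON}: on the unit sphere the additive gap $\lambda_{n+1}-\lambda_n\geq 1$ takes over the role played in \eqref{EQMAINCOMMON} by the multiplicative gap $\mu_{n+1}\geq\rho\mu_n$.

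First I would dispose of the case $d=1$, where $S^0=\{-1,1\}$ and it suffices to place both points at a single index chosen large enough inside the window; so assume $d\geq 2$. I would cover $S^{d-1}$ by finitely many closed pieces $F_1,\dots,F_R$, each contained in a coordinate patch on which $S^{d-1}$ is a bi-Lipschitz graph over a box $L_i\subset\RR^{d-1}$, with $R$ and the bi-Lipschitz constants depending only on $d$. On a single box $L_i=\prod_{j=1}^{d-1}[b_j,b_j+\gamma]$ I would reproduce the grid of points from the proof of Theorem~\ref{THMMAINCOMMON}, applying Lemma~\ref{LEMTECHNICALSEQFHC} in dimension $d-1$ (with $\lambda$ in the role of $\mu$): for each $n=\phi(k_1,\dots,k_{d-1})$ one defines the family of points whose chart-coordinates are the usual nested sums, the coarse spacing in the $j$-th coordinate being $C/\lambda_{\phi(k_1,\dots,k_{j-1},0,\dots,0)}$ and the fine steps being $\veps/\lambda_{\phi(\dots)}$, with $A:=C/\veps$. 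Here $C$ is chosen so that a chart-separation of $C$ yields Euclidean separation at least $B$, and $\veps$ is chosen so that the resulting chart covering-radius transfers, through the chart, to a Euclidean radius at most $\delta/\lambda_n$; one also takes $A\geq B$. Properties \textbf{(A)} and \textbf{(B)} inside the patch are then checked exactly as in Theorem~\ref{THMMAINCOMMON}, the only change being that the different-$n$ bound \eqref{EQMAINCOMMON} is now supplied by the radial inequality above together with the separation $|\phi(k_1,\dots,k_{d-1})-\phi(l_1,\dots,l_{d-1})|\geq A$ granted by Lemma~\ref{LEMTECHNICALSEQFHC}, giving $\|\lambda_n x_{n,k}-\lambda_m x_{m,j}\|\geq|\lambda_n-\lambda_m|\geq A\geq B$.

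To combine the $R$ patches while keeping the window bounded, I would run the construction on $L_1$ with starting index $N_1=N$, producing centres with indices in $[N_1,N_1+Q-1]$; then on $L_2$ with $N_2=N_1+Q-1+\lceil B\rceil$, and so on, advancing the starting index by $Q-1+\lceil B\rceil$ at each step. Distinct patches then occupy disjoint index windows separated by at least $\lceil B\rceil$, so no index $n$ is shared by two patches and any two centres from different patches lie on spheres whose radii differ by at least $\lceil B\rceil\geq B$; by the radial inequality they are $B$-separated, while within a single patch separation was already established. All indices used lie in $[N,N+q-1]$ with $q\leq R\big(Q+\lceil B\rceil\big)$, which depends only on $d$, $\delta$ and $B$. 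Property \textbf{(A)} holds because $F_1\cup\dots\cup F_R=S^{d-1}$ and each $F_i$ is covered by the balls attached to $L_i$.

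The single genuinely delicate point is the uniformity of $q$ in $N$, and it rests entirely on the fact that Lemma~\ref{LEMTECHNICALSEQFHC} confines every index to a window of $N$-independent length $Q$, combined with the additive separation available on the unit sphere; this is the whole reason property (FHCSG) is introduced in place of property (SG). The remaining work—selecting the finitely many charts, tracking the bi-Lipschitz constants so that the chart-level covering and separation estimates pass to $\RR^d$, and absorbing those constants into the choice of $C$, $\veps$ and $A$—is routine bookkeeping of the same type already carried out in Theorem~\ref{THMMAINCOMMON} and Corollary~\ref{CORCOVERING2}.
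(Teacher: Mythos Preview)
Your proposal is correct and follows essentially the same route as the paper's proof: cover $S^{d-1}$ by finitely many bi-Lipschitz charts over $(d-1)$-dimensional boxes, apply Lemma~\ref{LEMTECHNICALSEQFHC} in dimension $d-1$ with $A=\max(C/\veps,B)$ to build the nested grid on each box, transfer the covering and separation estimates through the charts, use the radial inequality $\|\lambda_n x-\lambda_m y\|\geq|\lambda_n-\lambda_m|\geq|n-m|$ for the different-$n$ case, and concatenate the patches by shifting the starting index by $Q+\lceil B\rceil$ at each step. The paper's argument is identical in structure and detail, with the bilipschitz constant $c$ playing the role of your chart constants and $q=(u-1)(Q+\kappa)+Q-1$ in place of your $R(Q+\lceil B\rceil)$.
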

The main difference with the coverings used when we applied Theorem \ref{THMCOVERING1} is that now we use at most $q$ values of the sequence $(\lambda_n)$, whereas this size was not controlled before.
\begin{proof}
We first observe that $S^{d-1}$ can be covered by a finite union of sets $K_1,\dots,K_u$ such that, for each $j=1,\dots,u$, there exists a surjective map $\gamma_j:[0,1]^{d-1}\to K_j$ which is bilipschitz:
 $\exists c>0$ such that, for any $y,z\in [0,1]^{d-1}$, 
 $$c^{-1}\|y-z\|\leq \|\gamma_j(y)-\gamma_j(z)\|\leq c \|y-z\|.$$
 Of course, $c$ may be chosen to be independent of $j$. We apply Lemma \ref{LEMTECHNICALSEQFHC} to $d-1$ and to $A=\max\left(\frac{c^2B}\delta,B\right)$ to get some $Q\geq 0$. Let $N\in\NN$ and let us first show how to cover $K_1$. The sets $E_r$, the maps $s_r,\phi$ are defined by Lemma \ref{LEMTECHNICALSEQFHC} and let $n\in\{N,\dots,N+Q-1\}$. Then either $n$ is not equal to some $\phi(k_1,\dots,k_{d-1})$ and we do nothing or $n=\phi(k_1,\dots,k_{d-1})$ for a unique $(k_1,\dots,k_{d-1})$. We then define the set $\{y_{n,k}\}$ as
$$\begin{array}{ll}
[0,1]^{d-1}\cap\Bigg\{\Bigg(&\displaystyle \frac{\alpha_1 cB}{\lambda_{\phi(0,\dots,0)}}+\frac{\delta/c }{\lambda_{\phi(1,0,\dots,0)}}+\dots+\frac{\delta/c }{\lambda_{\phi(k_1,0,\dots,0)}},
\\[4mm]
&\displaystyle \frac{\alpha_2 cB}{\lambda_{\phi(k_1,0,\dots,0)}}+\frac{\delta/c }{\lambda_{\phi(k_1,1,0,\dots,0)}}+\dots+\frac{\delta/c }{\lambda_{\phi(k_1,k_2,\dots,0)}},
\\
&\quad\quad\quad\vdots\\
&\displaystyle \frac{\alpha_{d-1} cB}{\lambda_{\phi(k_1,\dots,k_{d-2},0)}}+\frac{\delta/c }{\lambda_{\phi(k_1,\dots,k_{d-2},1)}}+\dots+\frac{\delta/c }{\lambda_{\phi(k_1,\dots,k_{d-2},k_{d-1})}} \Bigg);\ \alpha_1,\dots,\alpha_{d-1}\geq 0\Bigg\}.
\end{array}
$$
We set, for any $n=N,\dots,N+Q-1$ and any $k$, $x_{n,k}=\gamma_1(y_{n,k})$ and let $x\in K_1$, $x=\gamma_1(y)$. Arguing exactly as in the proof of Theorem \ref{THMMAINCOMMON} and since $A\geq c^2B/\delta$, we find some $(n,k)$ such that
$$\|y-y_{n,k}\|\leq\frac{\delta}{c\lambda_n}$$
which implies 
$$\|x-x_{n,k}\|\leq \frac{\delta}{\lambda_n}.$$
Moreover, consider $x_{n,k}$ and $x_{m,j}$ with $(n,k)\neq (m,\ell )$. Then either $n\neq m$ and by construction of $\phi$, 
\begin{eqnarray}\label{EQCOVERINGFHC}
\|\lambda_n x_{n,k}-\lambda_m x_{m,\ell }\|\geq |\lambda_n-\lambda_m|\geq |n-m|\geq A\geq B.
\end{eqnarray}
Or $n=m$ and in that case, as in the proof of Theorem \ref{THMMAINCOMMON},
$\|\lambda_n y_{n,k}-\lambda_n y_{n,\ell}\|\geq {cB}$  which immediately yields $\|\lambda_n x_{n,k}-\lambda_n x_{n,\ell}\|\geq B.$
Thus, we have produced a good covering of $K_1$. We produce a similar covering of $K_2$ but starting from $N+Q+\kappa$ and thus stopping at $N+2Q-1+\kappa$ where $\kappa\in\NN$ is such that $\kappa\geq B$. More generally, we do the same for each $K_j$, $j=1,\dots,u$, starting at $N+(j-1)(Q+\kappa)$ and stopping at $N+(j-1)(Q+\kappa)+Q-1$. We finally get a net $(x_{n,k})_{N\leq n\leq N+(u-1)(Q+\kappa)+Q-1}$ of $S^{d-1}$ satisfying {\bf (A)} with $q=(u-1)(Q+\kappa)+Q-1$. Moreover, {\bf (B)} is also satisfied since, if $x_{n,k}$ belongs to the covering of $K_j$ and $x_{m,\ell}$ belongs to the covering of $K_{\ell}$ for $\ell\neq k$, then $|n-m|\geq\kappa$ so that
$\|\lambda_n x_{n,k}-\lambda_m x_{m,\ell}\|\geq B.$
\end{proof}
We now combine the previous covering argument with the production of sets with positive lower density.
\begin{lemma}\label{LEMCOVERINGPOSITIVE}
Let $(\lambda_n)$ be an increasing sequence of positive real numbers satisfying property (FHCSG). Let  $(B_p)$ and $(\delta_p)$ be two sequences of positive real numbers. Then there exist a sequence $(q_p)$ of positive integers, a sequence $(\mathbf N_p)$ of subsets of $\NN$ such that 
\begin{enumerate}
\item for any $p\geq 1$, for any $N\in\mathbf N_p$, there exists a finite number $(x_{n,k})_{N\leq n\leq N+q_p-1}$ of elements of $S^{d-1}$ such that
\begin{itemize}
\item $S^{d-1}\subset \bigcup_{n=N}^{N+q_p-1}\bigcup_k B\left(x_{n,k},\frac{\delta_p}{\lambda_n}\right).$
\item if $(n,k)\neq (m,\ell)$, $N\leq n,m\leq N+q_p-1$, $\|\lambda_n x_{n,k}-\lambda_m x_{m,\ell}\|\geq B_p.$
\end{itemize}
\item Each set $\mathbf N_p$ has positive lower density.
\item For any $p,r\geq 1$ and any $(N,M)\in \mathbf N_p\times \mathbf N_r$ with $N\neq M$, 
$$|N-M|\geq (B_p+B_r+q_p+q_r).$$
\item For any $p\geq 1$, $\min(\mathbf N_p)\geq B_p$.
\end{enumerate}
\end{lemma}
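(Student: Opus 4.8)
The plan is to separate the geometric content, which is exactly Lemma~\ref{LEMCOVERINGFHC}, from a purely combinatorial construction of the index sets $\mathbf N_p$. First I would, for each $p\ge 1$, apply Lemma~\ref{LEMCOVERINGFHC} to the dimension $d$, to $\delta_p$ and to $B_p$; this produces an integer $q_p>0$ for which the two bullets of item~(1) hold \emph{for every} $N\in\NN$. Consequently item~(1) is automatic for whatever sets $\mathbf N_p$ we end up choosing, and the whole problem reduces to constructing subsets $\mathbf N_p\subset\NN$ satisfying (2), (3) and (4). Writing $c_p=\lceil B_p+q_p\rceil$, it clearly suffices to build \emph{pairwise disjoint} sets $\mathbf N_p$, each of positive lower density, with $\min\mathbf N_p\ge B_p$, and such that any two distinct integers $N\in\mathbf N_p$ and $M\in\mathbf N_r$ satisfy $|N-M|\ge c_p+c_r$: this single separation inequality yields (3) and, taking $p=r$, also forces the elements of each individual $\mathbf N_p$ to be far apart. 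Equivalently, I attach to each chosen integer a \emph{protected interval} of half-width $c_p$ and I must keep all these intervals pairwise disjoint.

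Next I would realise such a labelled configuration by an elementary block construction. Fix a rapidly increasing sequence of integers $t_0<t_1<t_2<\cdots$ and decree that on the block $[t_{j-1},t_j)$ only the labels $p\in\{1,\dots,j\}$ are used, each appearing periodically with density $d_p:=2^{-p}/(2c_p)$. This is possible because the total occupancy of one period is $\sum_{p\le j}2c_p d_p=\sum_{p\le j}2^{-p}<1$, so the protected intervals of the finitely many labels present in a block can be laid end to end with room to spare; a buffer of width $c_j+c_{j+1}$ left at each block boundary guarantees that the separation inequality survives across consecutive blocks. To enforce (4), I simply let label $p$ start appearing only from the first block with $t_{j-1}\ge B_p$ on. The resulting sets $\mathbf N_p$ are pairwise disjoint by construction, and the disjointness of the protected intervals delivers at once the required inequality $|N-M|\ge c_p+c_r$, hence (3).

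It then remains to check (2), and this is where the only real care is needed. Since label $p$ is present, at the fixed positive density $d_p$, in \emph{every} block of index $j\ge p$, and is absent only on the initial segment $[1,t_{p-1})$ of fixed length, the counting function satisfies $\#\big(\mathbf N_p\cap[1,N]\big)\ge d_p\,(N-t_{p-1})-O(1)$ for all large $N$, the $O(1)$ absorbing the losses coming from the boundary buffers and from the rounding of the periodic pattern. Letting $N\to\infty$ gives $\ldens(\mathbf N_p)\ge d_p>0$, which is (2). The main obstacle is precisely this lower-density estimate: one must be sure that the holes created at the block boundaries and the absence of label $p$ from the first $p-1$ blocks do not erode the $\liminf$, and this is secured by choosing the blocks long enough (for instance $t_j-t_{j-1}$ a large multiple of a common period of the patterns of the labels $1,\dots,j$) so that all boundary effects are of lower order. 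Everything else — the packing of finitely many protected intervals inside one block, and the derivation of the separation from their disjointness — is routine.
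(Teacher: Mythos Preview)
Your proposal is correct and follows the same two-step strategy as the paper: first invoke Lemma~\ref{LEMCOVERINGFHC} to obtain the integers $q_p$ (so that item~(1) holds for any choice of $\mathbf N_p$), then construct the index sets $\mathbf N_p$ purely combinatorially so as to satisfy (2)--(4). The only difference is that the paper dispatches the combinatorial step in one line by citing \cite[Lemma~6.19]{BM09} applied to the sequence $N_p=B_p+q_p$, whereas you sketch an explicit block construction with periodic patterns and protected intervals---which is essentially a proof of that very lemma. Your self-contained argument is correct (the lower-density check via long blocks is the standard way to handle it), but if you are willing to cite \cite{BM09} the whole second half of your proof collapses to a single sentence.
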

\begin{proof}
For a fixed value of $p$ (and thus of $B_p$ and $\delta_p$), let $q_p>0$ be given by Lemma \ref{LEMCOVERINGFHC}. We then apply \cite[Lemma 6.19]{BM09} to the sequence $(N_p)$ defined by $N_p=B_p+q_p$. This provides a sequence $(\mathbf N_p)$ of pairwise disjoint subsets of $\NN$ such that each $\mathbf N_p$ has positive upper density, $\min(\mathbf N_p)\geq N_p$ and $|N-M|\geq N_p+N_r$ whenever $N\neq M$ and $(N,M)\in N_p\times N_r$. Finally, condition (1) follows by applying Lemma \ref{LEMCOVERINGFHC} for any $N\in\mathbf N_p$.
\end{proof}

\subsection{The uniform frequent hypercyclicity criterion}
We now give a criterion for an operator group to have a common frequently hypercyclic vector. It is not very surprizing that this criterion is a strenghtened version of the frequent hypercyclicity criterion.
\begin{definition}
Let $(T_a)_{a\in\RR^d}$ be an operator group on $X$ and let $\|\cdot\|$ be an $F$-norm on $X$. We say that $(T_a)$ satisfies the uniform frequent hypercyclicity criterion if there exists $D\subset X$ dense such that, for any $f \in D$, 
\begin{enumerate}
\item $\sum T_{a_i}f$ converges for any sequence $(a_i)\subset\RR^d$ with $\|a_i-a_j\|\geq 1$ for any $i\neq j$.
\item $\sup\left\|\sum T_{a_i}f\right\|$ tends to zero as $C$ goes to infinity, where the supremum is taken over the sequences $(a_i)\subset\RR^d$ such that $\|a_i-a_j\|\geq 1$ for any $i\neq j$ and $\|a_i\|\geq C$ for any $i$.
\end{enumerate}
\end{definition}
We need a very last lemma on sets with positive lower density.
\begin{lemma}\label{LEMLOWERDENSITY}
Let $E\subset\NN$ with positive lower density and let $q\in\NN$. Let $F\subset  \mathbb N$ be such that, for any $n\in E$, $[n,n+q)\cap F$ is nonempty. Then $F$ has positive lower density.
\end{lemma}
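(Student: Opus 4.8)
The plan is to transfer the positive lower density of $E$ to $F$ by constructing a boundedly-many-to-one map from $E$ into $F$, losing only the multiplicative constant $q$.

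First I would use the hypothesis to fix, for each $n\in E$, a point $\psi(n)\in F$ with $n\leq \psi(n)\leq n+q-1$; such a point exists precisely because $[n,n+q)\cap F\neq\varnothing$. This defines a map $\psi:E\to F$, and the key observation is that $\psi$ is at most $q$-to-one: if $\psi(n)=\psi(n')=v$, then both $n$ and $n'$ lie in $\{v-q+1,\dots,v\}$, a set of $q$ integers, so at most $q$ elements of $E$ can be mapped to a given value of $F$.

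This immediately yields a counting inequality. Since $\psi(n)\leq n+q-1$, the image of $\{n\leq N;\ n\in E\}$ under $\psi$ is contained in $F\cap\{1,\dots,N+q-1\}$, and the at-most-$q$-to-one bound gives, for every $N\geq 1$,
$$\#\{n\leq N+q-1;\ n\in F\}\geq \frac1q\,\#\{n\leq N;\ n\in E\}.$$
Dividing by $N+q-1$, rewriting the right-hand side as $\frac1q\cdot\frac{\#\{n\leq N;\ n\in E\}}{N}\cdot\frac{N}{N+q-1}$, and letting $N\to+\infty$, I would use that $\frac{N}{N+q-1}\to 1$ and that a constant shift of the index leaves a liminf unchanged to conclude that $\ldens(F)\geq \frac1q\,\ldens(E)>0$.

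I do not expect a genuine obstacle here: the entire content is the at-most-$q$-to-one property of $\psi$, and the only mildly delicate point is the harmless bookkeeping with the shift from $N$ to $N+q-1$ when passing to the liminf in the final step.
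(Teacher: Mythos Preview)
Your proof is correct. The paper's argument is organized a bit differently: it enumerates $E$ as an increasing sequence $(n_k)$, observes that each interval $[n_{kq},n_{(k+1)q})$ contains a point of $F$ (since $[n_{kq},n_{kq}+q)\subset[n_{kq},n_{(k+1)q})$), and concludes $\ldens(F)\geq\ldens\big((n_{kq})\big)\geq\frac1q\ldens(E)$. Your at-most-$q$-to-one map $\psi:E\to F$ is a more direct packaging of the same counting, avoiding the intermediate subsequence $(n_{kq})$; both routes are equally elementary and yield the identical bound $\ldens(F)\geq\frac1q\ldens(E)$.
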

\begin{proof}
Write $E$ as an increasing sequence $(n_k)$. For all $k\geq 1$, there exists at least one element of $F$
in $[n_{kq},n_{(k+1)q})$. Hence
$$\ldens(F)\geq \ldens\big((n_{kq})\big)\geq\frac 1q\ldens(E)>0.$$
\end{proof}
\begin{theorem}\label{THMMAINFHC}
Let $(T_a)_{a\in\RR^d}$ be a strongly continuous operator group on $X$ satisfying the uniform frequent hypercyclicity criterion. Let $(\lambda_n)$ be an increasing sequence of positive integers satisfying property (FHCSG). Then $\bigcap_{a\in S^{d-1}}FHC(T_{\lambda_n a})$ is nonempty.
\end{theorem}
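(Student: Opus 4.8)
The plan is to build a single vector $x$ as a convergent series of backward--translated targets, arranging the shifts so that \emph{one} vector serves every direction $a\in S^{d-1}$ at once; this is the frequent analogue of the covering argument of Theorem \ref{THMMAINCOMMON}, with Lemma \ref{LEMCOVERINGPOSITIVE} supplying coverings whose index ranges are short and whose blocks are widely separated. Let $D\subset X$ be a dense set witnessing the uniform frequent hypercyclicity criterion and, using the separability of $X$, enumerate a dense sequence $(g_p)_{p\geq 1}$ in $D$ in which every value is repeated infinitely often. For $f\in D$ set $\eta_f(C)=\sup\big\{\|\sum_i T_{a_i}f\|;\ \|a_i-a_j\|\geq 1\ (i\neq j),\ \|a_i\|\geq C\big\}$, which tends to $0$ as $C\to+\infty$ by the criterion. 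I would then choose inductively $\delta_p\downarrow 0$ and $B_p\uparrow+\infty$ so fast that $\sup_{\|b\|<\delta_p}\|T_bg_p-g_p\|\to 0$, that $\eta_{g_p}(B_p)\leq 2^{-p}$, and that $\eta_{g_{p'}}(B_p)\leq 2^{-p}$ for every $p'<p$ (finitely many conditions at each step). Feeding $(B_p),(\delta_p)$ into Lemma \ref{LEMCOVERINGPOSITIVE} yields integers $q_p$, sets $\mathbf N_p$ of positive lower density, and for each $p$ and each $N\in\mathbf N_p$ a covering $(x^{p,N}_{n,k})_{N\leq n\leq N+q_p-1}$ of $S^{d-1}$. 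I define
$$x=\sum_{p\geq 1}\ \sum_{N\in\mathbf N_p}\ \sum_{n,k} T_{-\lambda_n x^{p,N}_{n,k}}\,g_p=:\sum_{p\geq1}z_p.$$

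For convergence, observe that every covering point lies on the unit sphere, so $\|\lambda_n x^{p,N}_{n,k}\|=\lambda_n$; two shifts with distinct $n$ are thus at distance $\geq|\lambda_n-\lambda_m|\geq 1$ (property (FHCSG)), while for equal $n$ condition (B) gives distance $\geq B_p\geq 1$. Hence, for fixed $p$, the shifts $-\lambda_n x^{p,N}_{n,k}$ are pairwise $1$-separated and all have norm $\geq\lambda_{\min\mathbf N_p}\geq\min\mathbf N_p\geq B_p$ (here the $\lambda_n$ being integers gives $\lambda_m\geq m$). The criterion then yields $\|z_p\|\leq\eta_{g_p}(B_p)\leq 2^{-p}$, so $x$ converges in the $F$-space $X$.

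Now fix $a\in S^{d-1}$ and a nonempty open $U$, and pick $g\in D$, $r>0$ with $\{f:\|f-g\|<r\}\subset U$. Using the infinite repetition, choose $p$ with $g_p=g$ and $p$ as large as needed. For any $N\in\mathbf N_p$, condition (A) provides $n\in\{N,\dots,N+q_p-1\}$ and $k$ with $\|\lambda_n a-\lambda_n x^{p,N}_{n,k}\|<\delta_p$. Splitting $T_{\lambda_n a}x-g_p$ into the diagonal term $T_{\lambda_n a-\lambda_n x^{p,N}_{n,k}}g_p-g_p$ and the error $\sum' T_{\lambda_n a-\lambda_{n'}x^{p',N'}_{n',k'}}g_{p'}$ (sum over all $(p',N',n',k')\neq(p,N,n,k)$), I estimate the error group by group in $p'$: for each fixed $p'$ the shifted points stay $1$-separated, the $p'=p$ group has all norms $\geq B_p-\delta_p$ (by (B) and by the radius gap $|\lambda_n-\lambda_{n'}|\geq1$ for distinct levels), and conditions (3)--(4) of Lemma \ref{LEMCOVERINGPOSITIVE} force blocks with $p'\neq p$ to satisfy $|n-n'|\geq B_p+B_{p'}$, whence every such shift has norm $\geq|\lambda_{n'}-\lambda_n|\geq B_p+B_{p'}$. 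Therefore
$$\|T_{\lambda_n a}x-g_p\|\leq \sup_{\|b\|<\delta_p}\|T_bg_p-g_p\|+\eta_{g_p}(B_p-\delta_p)+\sum_{p'\neq p}\eta_{g_{p'}}(B_p+B_{p'}),$$
and the inductive choice of $(\delta_p),(B_p)$ (splitting the tail into $p'<p$, controlled by $\eta_{g_{p'}}(B_p)\leq2^{-p}$, and $p'>p$, controlled by $\eta_{g_{p'}}(B_{p'})\leq2^{-p'}$) makes the right-hand side tend to $0$ as $p\to+\infty$, \emph{uniformly} in $a$ and $N$. Fixing $p$ large enough that this bound is $<r$, every $N\in\mathbf N_p$ produces an $n\in[N,N+q_p)$ with $T_{\lambda_n a}x\in U$; since $\mathbf N_p$ has positive lower density, Lemma \ref{LEMLOWERDENSITY} gives $\ldens\{n:T_{\lambda_n a}x\in U\}>0$. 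As $a$ and $U$ are arbitrary, $x\in\bigcap_{a\in S^{d-1}}FHC(T_{\lambda_n a})$.

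The main obstacle is the simultaneous bookkeeping of the error term. The criterion only bites on families that are \emph{both} pairwise $1$-separated \emph{and} uniformly far from the origin, so every shift appearing in the error must be verified to satisfy both. Within a direction this is handed to us by (B), but the crucial work is done by the inter-block separations (3)--(4) combined with the fact that all covering points sit on the unit sphere: radii alone then separate distinct levels, and the cross-target shifts escape to infinity at the controlled rate $B_p+B_{p'}$, which is exactly what makes the tail $\sum_{p'\neq p}\eta_{g_{p'}}(B_p+B_{p'})$ summable and vanishing. The one degree of freedom to spend carefully is the choice of $(B_p)$, which at each step must dominate the finitely many decay functions $\eta_{g_{p'}}$ with $p'<p$; the infinite repetition in $(g_p)$ is then what lets an arbitrary target be hit at an index $p$ large enough for these estimates to beat the prescribed radius $r$.
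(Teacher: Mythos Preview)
Your proposal is correct and follows essentially the same route as the paper: build $x=\sum_{p}\sum T_{-\lambda_n x_{n,k}}g_p$ using the coverings from Lemma~\ref{LEMCOVERINGPOSITIVE}, exploit the unit-sphere reverse triangle inequality $\|\lambda_n y-\lambda_m z\|\geq|\lambda_n-\lambda_m|$ together with conditions (3)--(4) to separate blocks, group the error by $p'$, and finish with Lemma~\ref{LEMLOWERDENSITY}. The only cosmetic differences are that the paper takes an arbitrary dense sequence and chooses $p$ with $\|f_p-g\|<2^{-p}$ and $(p+3)2^{-p}<\varepsilon$, whereas you use infinite repetition of each target and let $p\to\infty$ along $g_p=g$; and the paper builds in a ``$-1$'' slack (defining $B_p$ via $\|a_i\|\geq B_p-1$) where you use $B_p-\delta_p$, which works just as well since $B_p-\delta_p\to\infty$.
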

\begin{proof}
Let $(f_p)\subset D$ be a dense sequence in $X$. For any $p\geq 1$, let $\delta_p>0$ be such that $\|T_a f_p-f_p\|<2^{-p}$ provided $\|a\|<\delta_p$. We then consider a sequence $(B_p)_{p\geq 1}$ where $B_p\geq 1$ is such that, for any sequence $(a_i)\subset\RR^d$ with $\|a_i\|\geq B_p-1$ and $\|a_i-a_j\|\geq 1$ whenever $i\neq j$, then $\left\|\sum_i T_{a_i}f_k\right\|<2^{-p}$ for any $k\leq p$.
We then apply Lemma \ref{LEMCOVERINGFHC} to these sequences $(B_p)$ and $(\delta_p)$. For all $p\geq 1$, we define the set $\{a_i(p)\}$ as the set of all the $\lambda_n x_{n,k}$ for $N$ describing $\mathbf N_p$ and $N\leq n\leq N+q_p-1$. Then we may observe that, for any $i\geq 1$, 
$\|a_i(p)\|\geq |\lambda_n|\geq B_p$ and for $i\neq j$, $\|a_i(p)-a_j(p)\|\geq 1$. This comes trivially from the lemma if $a_i(p)=\lambda_n x_{n,k}$ and $a_j(p)=\lambda_m x_{m,\ell}$ with $N\leq n,m\leq N+q_p-1$ and $N\in\mathbf N_p$ (observe that there exists at most one $N$ in $\mathbf N_p$ such that $N\leq n\leq N+q_p-1$). Otherwise,
$$\|a_i(p)-a_j(p)\|\geq |\lambda_n-\lambda_m\geq  |n-m|\geq \inf\{|N-M|;\ M,N\in \mathbf N_p,\ M\neq N\}-q_p\geq B_p\geq 1.$$
The same proof shows that, if $p\neq r$, then for any $i,j$, 
$$\|a_i(p)-a_j(r)\|\geq B_p+B_r.$$
By definition of $B_p$, we know that for each $p\geq 1$, the series $\sum_{i\geq 1}T_{-a_i(p)}f_p$ converges and that $\left\|\sum_{i\geq 1}T_{-a_i(p)}f_p\right\|\leq 2^{-p}$. We finally set
$$f=\sum_{p\geq 1}\sum_{i\geq 1}T_{-a_i(p)}f_p$$
and we claim that $f\in\bigcap_{a\in S^{d-1}}FHC(T_{\lambda_n a})$. Indeed, let $a\in S^{d-1}$, $g\in X$ and let $\veps>0$. There exists $p\geq 1$ such that $\|f_p-g\|<2^{-p}$ and $2^{-p}(p+3)<\veps$. Moreover, for any $N\in\mathbf N_p$, there exists $n\in\{N,\dots,N+q_p-1\}$ such that
$\|\lambda_n a-\lambda_n x_{n,k}\|<\delta_p$. Let $i\geq 1$ be such that $a_i(p)=\lambda_n x_{n,k}$. Then
\begin{eqnarray*}
T_{\lambda_n a}f-g&=&\sum_{r<p}\sum_j T_{\lambda_n a-a_j(r)}f_r+\sum_{j\neq i}T_{\lambda_n a-a_j(p)}f_p+\big(T_{\lambda_n a-a_i(p)}f_p-f_p\big)\\
&&+\big(f_p-g\big)+\sum_{r>p}\sum_j T_{\lambda_n a-a_j(r)}f_r.
\end{eqnarray*}
Let us set, for any $r,j$, $b_j(r)=\lambda_n a -a_j(r)$. Then $\|b_j(r)-b_k(r)\|\geq 1$ for any $r\geq 1$ and any $k\neq j$. Moreover, if $r\neq p$, then 
$$\|b_j(r)\|\geq \|a_i(p)-a_j(r)\|-\|a_i(p)-\lambda_n a\|\geq B_{\max(r,p)}-1$$
and this inequality is also true for $r=p$ and $j\neq i$. By definition of $(B_r)$ and $\delta_p$ we then get
\begin{eqnarray*}
\left\|T_{\lambda_n a}f-g\right\|&\leq&(p-1)2^{-p}+2^{-p}+2^{-p}+2^{-p}+\sum_{r>p}2^{-r}\leq (p+3)2^{-p}<\veps.
\end{eqnarray*}
Lemma \ref{LEMLOWERDENSITY} then achieves the proof that $f$ is a frequently hypercyclic vector for $T_a$.
\end{proof}
Wehre $(\lambda_n)$ is the whole sequence of integers, we can combine this with an algebraic result of \cite{CoMuPe07} to obtain:
\begin{theorem}
Let $(T_a)_{a\in\RR^d}$ be a strongly continuous operator group satisfying the uniform frequent hypercyclicity criterion. Then $\bigcap_{a\in\RR^d\backslash\{0\}}FHC(T_a)$ is nonempty.
\end{theorem}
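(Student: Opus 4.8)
The plan is to reduce the full family $(T_a)_{a\in\RR^d\backslash\{0\}}$ to the unit sphere by means of Theorem \ref{THMMAINFHC}, and then to collapse the radial directions using the algebraic invariance of frequent hypercyclicity along one-parameter subgroups. First I would apply Theorem \ref{THMMAINFHC} with the particular sequence $\lambda_n=n$. Since $(T_a)$ is a group, $T_{na}=T_a^n$, so that $FHC(T_{na})=FHC(T_a)$ for every $a$; hence the conclusion of Theorem \ref{THMMAINFHC} reads exactly $\bigcap_{a\in S^{d-1}}FHC(T_a)\neq\varnothing$, provided $(n)$ is admissible.

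The only thing to check before invoking Theorem \ref{THMMAINFHC} is that $\lambda_n=n$ is an increasing sequence of positive integers with property (FHCSG). The first requirement $\lambda_{n+1}-\lambda_n=1\geq 1$ is immediate. For the second, fix $C>0$ and use the elementary bound $\sum_{n=N+1}^{N+p}\frac1n\geq\frac{p}{N+p}$. Choosing $p_0=2\lceil C\rceil$ one gets $\frac{p_0}{N+p_0}\geq\frac CN$ for every $N\geq p_0$; for the finitely many remaining indices $N\in\{1,\dots,p_0-1\}$ the inequality $\sum_{n=N+1}^{N+p}\frac1n\geq C\geq\frac CN$ holds once $p$ is large enough, by divergence of the harmonic series. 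Since the left-hand side is nondecreasing in $p$, taking $p$ to be the maximum of the two thresholds settles all $N$ simultaneously, so $(n)$ indeed has property (FHCSG).

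It then remains to pass from $S^{d-1}$ to $\RR^d\backslash\{0\}$. For this I would fix $\omega\in S^{d-1}$ and consider the one-parameter group $(S_t)_{t\in\RR}$ defined by $S_t=T_{t\omega}$; it is a strongly continuous one-parameter group because $t\mapsto t\omega$ is a continuous homomorphism into $\RR^d$ and $(T_a)$ is strongly continuous. Applying the algebraic result of \cite{CoMuPe07} to this one-parameter group gives $FHC(S_t)=FHC(S_1)$ for every $t>0$, that is $FHC(T_{r\omega})=FHC(T_\omega)$ for every $r>0$. Writing an arbitrary $a\in\RR^d\backslash\{0\}$ as $a=\|a\|\,\omega$ with $\omega=a/\|a\|\in S^{d-1}$, this shows $FHC(T_a)=FHC(T_{a/\|a\|})$. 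Since $a\mapsto a/\|a\|$ maps $\RR^d\backslash\{0\}$ onto $S^{d-1}$, the two families of sets coincide and
$$\bigcap_{a\in\RR^d\backslash\{0\}}FHC(T_a)=\bigcap_{\omega\in S^{d-1}}FHC(T_\omega),$$
which is nonempty by the previous step.

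The substance of the argument lies entirely in Theorem \ref{THMMAINFHC}; once it is available, the present statement is a packaging result. Accordingly, the only genuinely delicate point to watch is the interface between the two black boxes: one must make sure that the iterates $(T_a^n)$ produced by Theorem \ref{THMMAINFHC} are precisely the discretization to which the Conejero--M\"uller--Peris invariance applies, and that the latter is invoked for a strongly continuous one-parameter group (the restriction of $(S_t)$ to $t>0$ being a strongly continuous semigroup). Neither presents a real obstacle, so the main ``hard part'' is simply the verification of property (FHCSG) for $(n)$, which is the short harmonic-sum estimate above.
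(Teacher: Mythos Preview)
Your proposal is correct and follows exactly the route indicated in the paper: apply Theorem \ref{THMMAINFHC} with $\lambda_n=n$ (which you verify has property (FHCSG)) to obtain a common frequently hypercyclic vector on $S^{d-1}$, then invoke the Conejero--M\"uller--Peris invariance along one-parameter subgroups to collapse the radial directions. The paper states this in one line without spelling out the (FHCSG) check; your harmonic-sum verification is a correct fleshing-out of that implicit step.
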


We now give examples where the previous theorems may be applied.

\begin{corollary}\label{CORFHC}
Let $(T_a)_{a\in\RR^d}$ be an operator group acting on $X$ and let $\|\cdot\|$ be an $F$-norm on $X$. Assume that there exist a dense set $D\subset X$ and $p>d$ such that, for any $f\in D$, there exists $A>0$ such that 
$$\|T_a f\|\leq \frac A{\|a\|^p}$$
for any $a\in\RR^d$ with $\|a\|\geq 1$. Then $\bigcap_{a\in\RR^d\backslash\{0\}}FHC(T_a)$ is nonempty.
\end{corollary}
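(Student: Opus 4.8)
The plan is to recognize that the hypotheses of this corollary are word-for-word those of Corollary \ref{COREXUNIFORM}, and that the estimate used to prove that corollary in fact delivers the two conditions of the uniform frequent hypercyclicity criterion. Once this is done, the conclusion follows at once from the theorem immediately preceding this corollary, which asserts that any strongly continuous operator group satisfying the uniform frequent hypercyclicity criterion admits a common frequently hypercyclic vector for all $T_a$, $a\neq 0$. So the whole task reduces to checking conditions (1) and (2) of that criterion for the dense set $D$.

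To do this I would reuse the counting observation from the proof of Corollary \ref{COREXUNIFORM}: if $(a_i)\subset\RR^d$ satisfies $\|a_i-a_j\|\geq 1$ for $i\neq j$, then there is a constant $\kappa_d>0$ with $\#\{i;\ 2^k\leq\|a_i\|<2^{k+1}\}\leq\kappa_d 2^{kd}$ for every $k\geq 1$. Fix $f\in D$ and the associated constant $A$. Using the $F$-norm triangle inequality together with the decay bound $\|T_{a_i}f\|\leq A/\|a_i\|^p$ on the points with $\|a_i\|\geq C\geq 1$, one gets
\[
\Bigl\|\sum_{i} T_{a_i}f\Bigr\|\leq\sum_{k;\,2^k\geq C}\ \sum_{i;\,2^k\leq\|a_i\|<2^{k+1}}\|T_{a_i}f\|\leq A\kappa_d\sum_{k;\,2^k\geq C}\frac{1}{2^{k(p-d)}}.
\]
Since $p>d$, the series on the right converges. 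Taking $C=1$ shows that $\sum_i\|T_{a_i}f\|<+\infty$, so the partial sums of $\sum_i T_{a_i}f$ are Cauchy and the series converges by completeness of $X$; this is condition (1). Letting $C\to+\infty$, the tail $\sum_{2^k\geq C}2^{-k(p-d)}$ tends to $0$ and the bound is uniform in the choice of $(a_i)$, which is exactly condition (2).

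It only remains to feed this into the theorem, which requires $(T_a)$ to be strongly continuous; I take this as a standing assumption on the group, as it is needed throughout the frequent-hypercyclicity results. Granting it, both hypotheses of the theorem are met and $\bigcap_{a\in\RR^d\backslash\{0\}}FHC(T_a)\neq\varnothing$. There is no genuinely hard step: the only point requiring care is the passage from the finite sums appearing in Corollary \ref{COREXUNIFORM} to the actual convergence of the infinite series $\sum_i T_{a_i}f$, handled by the $F$-norm triangle inequality and completeness, together with the uniformity in $(a_i)$ needed for condition (2), which is already visible in the displayed bound.
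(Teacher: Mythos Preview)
Your proof is correct and follows essentially the same approach as the paper: the paper's proof consists of the single sentence that the proof of Corollary \ref{COREXUNIFORM} shows the group satisfies the uniform frequent hypercyclicity criterion, and you have simply unpacked that sentence by rerunning the dyadic counting estimate to verify conditions (1) and (2) explicitly before invoking the preceding theorem.
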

\begin{proof}
The proof of Corollary \ref{COREXUNIFORM} shows that, under the above assumptions, $(T_a)_{a\in\RR^d}$ satisfies the uniform frequent hypercyclicity criterion.
\end{proof}

\begin{corollary}
Let $w:\mathbb R^d\to\mathbb R$ be a positive bounded and continuous function such that $x\mapsto \frac{w(x+a)}{w(x)}$ is bounded for each $a\in\mathbb R^d$. For $a\in\mathbb R^d$, let $\tau_a$ be the translation operator defined by $\tau_af(x)=f(x+a)$ defined on $X=L^p(\mathbb R^d,w(x)dx)$, $p\geq 1$. Assume moreover that $\int_{\mathbb R^d}w(x)dx<+\infty$. Then $\bigcap_{a\in\mathbb R^d\backslash\{0\}}FHC(\tau_a)$ is nonempty.
\end{corollary}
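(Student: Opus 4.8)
The plan is to verify that the translation group $(\tau_a)_{a\in\RR^d}$ on $X=L^p(\RR^d,w(x)\,dx)$ fits the hypotheses of Corollary \ref{CORFHC}, i.e.\ to produce a dense set $D$ and an exponent $q>d$ such that $\|\tau_a f\|\le A/\|a\|^q$ for $\|a\|\ge 1$ and each $f\in D$. Once this decay estimate is established, the conclusion $\bigcap_{a\in\RR^d\setminus\{0\}}FHC(\tau_a)\ne\varnothing$ follows immediately, since Corollary \ref{CORFHC} hands us exactly the uniform frequent hypercyclicity criterion and hence a common frequently hypercyclic vector.

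First I would take $D$ to be the set of compactly supported continuous functions on $\RR^d$, which is dense in $X$ because $w$ is integrable and bounded. Fix $f\in D$ with $\operatorname{supp} f\subset B(0,R)$. The main obstacle is that the naive bound $\|\tau_a f\|^p=\int_{\RR^d}|f(x+a)|^p w(x)\,dx$ does \emph{not} decay like a fixed power of $\|a\|$ for an arbitrary integrable weight: all one gets for free is $\|\tau_a f\|^p\le \|f\|_\infty^p\int_{\|x+a\|\le R} w(x)\,dx$, and the tail $\int_{\|x\|\ge \|a\|-R} w(x)\,dx$ can go to zero arbitrarily slowly. So the polynomial decay hypothesis of Corollary \ref{CORFHC} is simply not available under the stated assumptions, and this is precisely the gap I expect to be the crux.

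To repair this I would instead argue directly that $(\tau_a)$ satisfies the uniform frequent hypercyclicity criterion, rather than passing through the pointwise decay estimate. Fix $f\in D$ supported in $B(0,R)$ and take any sequence $(a_i)\subset\RR^d$ with $\|a_i-a_j\|\ge 1$ for $i\ne j$; then, for $\|a_i\|\ge C$ with $C$ large, the translates $\tau_{a_i}f$ have essentially disjoint supports shifted out to infinity. Writing $\bigl\|\sum_i \tau_{a_i}f\bigr\|^p=\int_{\RR^d}\bigl|\sum_i f(x+a_i)\bigr|^p w(x)\,dx$ and using that the points $-a_i$ are $1$-separated so that at most boundedly many supports overlap at any $x$, I would dominate this by $\kappa_d\,\|f\|_\infty^p\int_{\|x\|\ge C-R} w(x)\,dx$, exactly as in the proof of the Example following Proposition \ref{PROPEXUNIFORM}. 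Since $\int_{\RR^d}w<\infty$, the tail $\int_{\|x\|\ge C-R} w(x)\,dx\to 0$ as $C\to\infty$, which gives condition (2) of the criterion; condition (1), convergence of $\sum_i \tau_{a_i}f$, follows from the same disjoint-supports estimate applied to finite tails.

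Thus the correct route is through the criterion itself, using integrability of $w$ rather than any rate of decay; I would therefore state the corollary as a consequence of Theorem \ref{THMMAINFHC} via the uniform frequent hypercyclicity criterion, noting that the disjointness-of-supports argument already appearing in the Example is what makes conditions (1) and (2) hold, and that Theorem \ref{THMMAINFHC} then yields a common frequently hypercyclic vector for $(\tau_{\lambda_n a})_{a\in S^{d-1}}$ for any $(\lambda_n)$ with property (FHCSG), in particular for $\lambda_n=n$, whence $\bigcap_{a\in\RR^d\setminus\{0\}}FHC(\tau_a)$ is nonempty after combining the radial case with the algebraic result of \cite{CoMuPe07}.
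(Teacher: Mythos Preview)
Your final approach is correct and matches the paper's: you verify the uniform frequent hypercyclicity criterion directly via the bounded-overlap argument for compactly supported $f$, using only integrability of $w$ to make the tails $\int_{\|x\|\ge C-R}w(x)\,dx$ vanish. Your preliminary observation that Corollary~\ref{CORFHC} does not apply here (no polynomial decay is available for a general integrable weight) is also correct and is precisely why the paper, too, argues straight from the criterion rather than through that corollary; the only cosmetic slip is that the constant in your bound should be $\kappa^p$ rather than $\kappa_d$, since the pointwise estimate $\bigl|\sum_i f(x+a_i)\bigr|\le \kappa\|f\|_\infty$ is raised to the $p$-th power before integrating.
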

\begin{proof}
Let $D\subset X$ be the dense set of compactly supported continuous functions. Let $f\in D$ and let $A>0$ be such that the support of $f$ is contained in $B(0,A)$. Let $(a_i)\subset\RR^d$ be such that $\|a_i-a_j\|\geq 1$ for any $i\neq j$. Then it is easy to check that there exists $\kappa>0$ which just depends on the dimension $d$ and of $A$ (and not of the particular choice of the sequence $(a_i)$) such that, for any $x\in\mathbb R^d$, $\|x-a_i\|\leq A$ for at most $\kappa$ different $a_i$. This implies that, for all $x\in\RR^d$, the series $\sum_i f(x+a_i)$ is convergent (the sum is finite) and that 
$\left|\sum_i f(x+a_i)\right|\leq\kappa \|f\|_\infty$. Therefore, if we assume moreover that $\inf_i \|a_i\|\geq C$, then 
$$\left\|\sum_i T_{a_i}f\right\|^p\leq \kappa^p \|f\|_\infty^p \int_{\|x\|\geq C-A}w(x)dx$$
and this goes to zero as $C$ goes to $+\infty$. Hence, the sequence $(\tau_a)_{a\in\RR^d}$ satisfies the uniform frequent hypercyclicity criterion.
\end{proof}
\begin{corollary}
The composition operators on $\H^2(\hd)$ induced by a Heisenberg translation admit a common frequently hypercyclic vector.
\end{corollary}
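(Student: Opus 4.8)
The plan is to derive this corollary exactly as Theorem \ref{THMHEISENBERG} was derived, but with the frequent-hypercyclicity machinery in place of the mixing one. Concretely, I would apply Corollary \ref{CORFHC} to the strongly continuous operator group $(C_{H_\gamma})_{\gamma\in\CC^{d-1}}$, after identifying the parameter space $\CC^{d-1}$ with $\RR^{2(d-1)}$. Note that Corollary \ref{CORFHC} has exactly the same hypothesis as Corollary \ref{COREXUNIFORM} — a dense set on which the orbit decays polynomially with exponent strictly larger than the dimension of the parameter — so verifying it for the Heisenberg group requires nothing new beyond what was already done for Theorem \ref{THMHEISENBERG}.

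The sole input is Lemma \ref{LEMHEISENBERG3}, which supplies a dense set $D\subset\hhd$ (take $D=D_p$ with $p$ large) such that each $F\in D$ satisfies
$$\|C_{H_\gamma}F\|_{\H^2}\leq \frac{A}{|\gamma|^{2d-\frac 32}}$$
for some $A>0$ and all $|\gamma|\geq 1$. The decay exponent is $p=2d-\frac 32$, and the parameter space $\CC^{d-1}\cong\RR^{2(d-1)}$ has dimension $2d-2$. The one inequality to check is $p>2d-2$, which holds with room to spare: $2d-\frac 32-(2d-2)=\frac 12>0$. This is precisely the margin for which the exponent $2d-\frac 32$ was arranged in Lemma \ref{LEMHEISENBERG3}.

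Hence the hypotheses of Corollary \ref{CORFHC} are satisfied (with its ``$d$'' read as $2d-2$ and its ``$p$'' as $2d-\frac 32$), and the corollary delivers that $\bigcap_{\gamma\in\CC^{d-1}\backslash\{0\}}FHC(C_{H_\gamma})$ is nonempty. I do not expect any genuine obstacle: the hard estimate that lets the decay survive into the higher-dimensional parameter space is already contained in Lemma \ref{LEMHEISENBERG3}, and the passage from that estimate to the uniform frequent hypercyclicity criterion — and thence to a common frequently hypercyclic vector via Theorem \ref{THMMAINFHC} — is exactly the reasoning behind Corollary \ref{CORFHC}. The only bookkeeping worth a second glance is matching the normalization $\|a\|\geq 1$ and the identification $\CC^{d-1}\cong\RR^{2d-2}$, both of which are routine.
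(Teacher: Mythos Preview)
Your proposal is correct and follows exactly the paper's own proof, which consists of the single sentence ``As for Theorem \ref{THMHEISENBERG}, we may apply directly Corollary \ref{CORFHC}.'' You have simply made explicit the bookkeeping (the identification $\CC^{d-1}\cong\RR^{2d-2}$ and the check $2d-\tfrac32>2d-2$) that the paper leaves to the reader.
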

\begin{proof}
As for Theorem \ref{THMHEISENBERG}, we may apply directly Corollary \ref{CORFHC}.
\end{proof}
\subsection{Uniformly Runge transitive operators groups}
We now turn to the case of the translation group on $H(\CC^d)$. The translation operators $\tau_a$, $a\in\CC^*$, acting on $H(\mathbb C)$, have very special dynamical properties due to the strongness of Runge theorem. An abstract framework to mimic what is useful in Runge theorem has been done in linear dynamics for at least two problems:
\begin{itemize}
\item the problem of common hypercyclicity of the whole operator group, like in \cite{Shk10} or in this paper. The natural generalization here seems the Runge property.
\item the problem of frequent hypercyclicity; this leads the authors of \cite{BoGre07} to introduce the notion of a Runge transitive operator.
\end{itemize}
Since we want a common frequently hypercyclic vector, the right concept seems to be a mixing of these two properties.
\begin{definition}\label{DEFFHCRUNGE}
Let $(T_a)_{a\in\RR^d}$ be an operator group acting on a Fr\'echet space $X$. We say that $(T_a)_{a\in\RR^d}$ is uniformly Runge transitive if there is an increasing sequence $(\|\cdot\|_p)$ of seminorms defining the topology of $X$ and positive integers $A_{p,s}$, $C_p$ for $p,s\in\NN$ such that
\begin{itemize}
\item[(i)]for all $p,s\in\NN$, $f\in X$ and $a\in\RR^d$ with $\|a\|\leq s$,
$$\|T_a f\|_{p}\leq A_{p,s}\|f\|_{s+C_p}.$$
\item[(ii)]for all $p,s\in\NN$ with $s>C_p$, for all $g,h\in X$, $\veps>0$ and for all finite sets $(z_i)\subset\RR^d$ with $\|z_i-z_j\|\geq C_p$ and $\|z_i\|\geq s$ whenever $i\neq j$, there exists $f\in X$ such that
$$\|f-g\|_{s-C_p}<\veps\textrm{ and }\|T_{z_i}f-h\|_p<\veps\textrm{ for all }i.$$
\end{itemize}
\end{definition}
\begin{theorem}\label{THMFHCRUNGE}
Let $(\lambda_n)$ be an increasing sequence of positive real numbers satisfying property (FHCSG) and let $(T_a)_{a\in\RR^d}$ be a strongly continuous operator group which is uniformly Runge transitive.
Then $\bigcap_{a\in S^{d-1}}FHC(T_{\lambda_n a})$ is nonempty. In particular, $\bigcap_{a\in\RR^{d}\backslash\{0\}}FHC(T_a)\neq\varnothing$.
\end{theorem}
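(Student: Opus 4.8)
The plan is to follow the architecture of the proof of Theorem~\ref{THMMAINFHC}, but to replace the explicit summation $f=\sum_p\sum_i T_{-a_i(p)}f_p$ (unavailable here, since in a Fréchet space the group need not act by summable perturbations) by an iterative construction resting on property (ii) of uniform Runge transitivity, with property (i) used to control the interference between successive corrections. First I would fix a dense sequence $(f_p)\subset X$ and, using strong continuity, choose $\delta_p\in(0,1]$ so that $\|T_bf_p-f_p\|$ is small in the relevant seminorm whenever $\|b\|<\delta_p$. For each $p$ I would fix a seminorm index $\pi_p$, increasing in $p$ and large enough that $\pi_p$-closeness of $2^{-p}$ forces metric closeness of $f_p$, and then choose integers $B_p\geq C_{\pi_p}$ so large that the within-stage separation will meet the hypothesis of property (ii) at level $\pi_p$ and that $\lambda_{B_p}$ dominates $C_{\pi_p}$. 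Feeding $(B_p)$ and $(\delta_p)$ into Lemma~\ref{LEMCOVERINGPOSITIVE} produces the positive-lower-density index sets $\mathbf N_p$, the block lengths $q_p$, and, for each $N\in\mathbf N_p$, a covering family $(x_{n,k})_{N\le n\le N+q_p-1}$ of $S^{d-1}$. The points $a_i(p)=\lambda_n x_{n,k}$ satisfy $\|a_i(p)\|\geq B_p$, have pairwise separation $\geq B_p$ within one $N$-block and $\geq B_p+B_r$ across stages; and, crucially, conditions (3)--(4) of Lemma~\ref{LEMCOVERINGPOSITIVE} together with the pairwise disjointness of the $\mathbf N_p$ guarantee that all the blocks $[N,N+q_p)$ (over every $p$ and every $N\in\mathbf N_p$) are pairwise disjoint and well separated, so each admissible index $n$ is assigned a \emph{unique} target $f_p$.

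Next I would enumerate these blocks as $Z_1,Z_2,\dots$ in order of increasing starting index $N^{(m)}$, which forces the point-norms of the $m$-th block $Z_m=\{\lambda_n x_{n,k}\}$ to be at least $\lambda_{N^{(m)}}$ and hence to grow with $m$. I would build $f=\lim_m g_m$ inductively: having $g_{m-1}$, apply property (ii) with $g=g_{m-1}$, target $h=f_{p_m}$, the finite separated set $Z_m$ (whose points have norm $\geq s_m:=\lambda_{N^{(m)}}>C_{\pi_{p_m}}$), and a summable tolerance $\veps_m$, obtaining $g_m$ with $\|g_m-g_{m-1}\|_{s_m-C_{\pi_{p_m}}}<\veps_m$ and $\|T_z g_m-f_{p_m}\|_{\pi_{p_m}}<\veps_m$ for every $z\in Z_m$. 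Because $N^{(m)}\to\infty$ and each $B_{p_m}$ was chosen to dominate $C_{\pi_{p_m}}$, one checks $s_m-C_{\pi_{p_m}}\to\infty$, so for every fixed seminorm index $q$ eventually $\|g_m-g_{m-1}\|_q<\veps_m$; summability of $(\veps_m)$ then gives convergence of $(g_m)$ in $X$.

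The hard part, and the place where the bookkeeping must be done with care, will be to show that corrections made at later stages do not spoil approximations secured at earlier ones. For a block $Z_j$ with $j<m$ and a point $z\in Z_j$ one writes $T_zg_m-T_zg_j=\sum_{i=j+1}^m T_z(g_i-g_{i-1})$ and bounds each term by property (i): $\|T_z(g_i-g_{i-1})\|_{\pi_{p_j}}\leq A_{\pi_{p_j},s}\,\|g_i-g_{i-1}\|_{s+C_{\pi_{p_j}}}$ with $s\geq\|z\|=\lambda_n$ for $n$ in block $j$. The scheme closes provided the seminorm indices satisfy $s_i-C_{\pi_{p_i}}\geq \|z\|+C_{\pi_{p_j}}$ for every $z\in Z_j$ and every $i>j$; this is exactly where I exploit that later blocks live at strictly larger point-norms than earlier ones (so $s_i$ outgrows the fixed $\|z\|$), and it lets me realize the inequality while keeping the tail $\sum_{i>j}\veps_i$ as small as desired.

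Granting this control, the limit $f$ satisfies $\|T_{\lambda_n x_{n,k}}f-f_p\|_{\pi_p}$ small for every admissible $(n,k)$ in a $p$-block. Then, for any $a\in S^{d-1}$ and any $N\in\mathbf N_p$, the covering property yields $n\in[N,N+q_p)$ and $k$ with $\|\lambda_n a-\lambda_n x_{n,k}\|<\delta_p$; writing $T_{\lambda_n a}f=T_{\lambda_n a-\lambda_n x_{n,k}}\bigl(T_{\lambda_n x_{n,k}}f\bigr)$ and using property (i) for this small translate (which transfers $\pi_p$-closeness to a coarser seminorm since $\pi_p\geq 1+C_{\text{coarse}}$) together with $\|T_{\lambda_n a-\lambda_n x_{n,k}}f_p-f_p\|$ small by the choice of $\delta_p$, shows $T_{\lambda_n a}f$ is close to $f_p$. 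Since such an $n$ is found for every $N\in\mathbf N_p$ and $\mathbf N_p$ has positive lower density, Lemma~\ref{LEMLOWERDENSITY} gives that $\{n:\ T_{\lambda_n a}f\text{ is near }f_p\}$ has positive lower density, whence $f\in\bigcap_{a\in S^{d-1}}FHC(T_{\lambda_n a})$. Finally, for the ``in particular'' I would specialize to $\lambda_n=n$ (which has property (FHCSG)) to get $\bigcap_{a\in S^{d-1}}FHC(T_{na})\neq\varnothing$, and then invoke the algebraic identity $FHC(T_{ra})=FHC(T_a)$ of \cite{CoMuPe07} for the one-parameter subgroups $r\mapsto T_{ra}$ to pass from the sphere to all of $\RR^d\backslash\{0\}$.
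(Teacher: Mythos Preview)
Your proposal is correct and follows essentially the same architecture as the paper's own proof: enumerate the blocks coming from Lemma~\ref{LEMCOVERINGPOSITIVE} by increasing starting index, build $f$ as the limit of an iterative sequence $(g_m)$ produced by property~(ii), and use property~(i) to show that later corrections do not spoil earlier approximations. The one place where the paper is more explicit than your sketch is the choice of tolerances: rather than a generic summable $(\veps_m)$, the paper at stage $j$ divides a prescribed summable sequence $(\eta_{n_j})$ by $1+\max\{A_{p_t,n}:t\le j,\ n\le n_t+q_{p_t}\}$, so that the constants $A_{\pi_{p_j},s}$ arising in the interference estimate are absorbed automatically; you should make this dependence explicit when you carry out the bookkeeping.
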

\begin{proof}
We first fix a dense sequence $(h_p)$ in $X$ and a sequence $(\veps_p)$ of positive real numbers going to 0. Let $(\|\cdot\|_p)$ be the sequence of seminorms defining the topology of $X$ coming from the definition of the uniform Runge transitivity. We then consider a decreasing sequence $(\delta_p)$ of positive real numbers and a sequence $(M_p)$ of integers such that, for any $p\geq 1$,
\begin{itemize}
\item $\|T_ah_p-h_p\|_p<\veps_p$ provided $\|a\|<\delta_p$;
\item $\|T_a f\|_p<\veps_p$ provided $\|f\|_{M_p}<\delta_p$ and $\|a\|<\delta_p$.
\end{itemize}
Since $(T_a)_{a\in\RR^d}$ is uniformly Runge transitive, there exist sequences $(A_{p,s})$ and $(C_p)$ such that (be careful! We have slightly changed the notations of Definition \ref{DEFFHCRUNGE} to adapt them to our present context. More precisely, $\|\cdot\|_p$ is replaced by $\|\cdot\|_{M_p}$ and $s$ is replaces by $\lambda_s$.)
\begin{itemize}
\item[(i)]for all $p,s\in\NN$, $f\in X$ and $a\in\RR^d$ with $\|a\|\leq\lambda_s$, 
$$\|T_af\|_{M_p}\leq A_{p,s}\|f\|_{\lambda_s+C_p}.$$
\item[(ii)]for all $p,s\in\NN$ with $\lambda_s>C_p$, for all $g,h\in X$, all $\veps>0$, all finite sequences $(z_i)\subset\RR^d$ with $\|z_i-z_j\|\geq C_p$ and $\|z_i\|\geq\lambda_s$ whenever $i\neq j$, there exists $f\in X$ with
$$\|f-g\|_{\lambda_s-C_p}<\veps\textrm{ and }\|T_{z_i}f-h\|_{M_p}<\veps\textrm{ for all }i.$$
\end{itemize}
We apply Lemma \ref{LEMCOVERINGPOSITIVE} with $B_p=C_p+p$ and $\delta_p$ fixed above. We get the sequences $(\mathbf N_p)$ and $(q_p)$ and we write $\bigcup_{p\geq 1}\mathbf N_p$ as an increasing sequence $(n_j)$. For any $j\geq 1$, there is a unique $p_j$ such that $n_j\in \mathbf N_{p_j}$. Moreover, $\lambda_{n_j}\geq n_j\geq B_{p_j}>C_{p_j}$ and $n_j\geq B_{p_j}\geq p_j$. We then define by induction on $j$ a sequence $(f_j)\subset X$ by setting $f_0=0$ and $f_j$ is such that
\begin{eqnarray}\label{EQUNIFORMRUNGE1}
\|f_j-f_{j-1}\|_{\lambda_{n_j}-C_{p_j}}\leq \frac{\eta_{n_j}}{1+\max(A_{p_t,n};\ t\leq j,\ n\leq n_t+q_{p_t})}
\end{eqnarray}
\begin{eqnarray*}
\|T_{\lambda_n x_{n,k}}f_j-h_{p_j}\|_{M_{p_j}}<\eta_{n_j}&&\textrm{for all $n$ in }\{n_j,\dots,n_j+q_{p_j}-1\}\\
&&\textrm{and all possible }k
\end{eqnarray*}
where $(\eta_j)$ is a sequence of positive real numbers such that $\sum_{j\geq k}\eta_j\leq\delta_k$ for any $k$. It is possible to find such an $f_j$ because $\|\lambda_n x_{n,k}-\lambda_m x_{m,\ell}\|\geq C_{p_j}$ if $(n,k)\neq (m,\ell)$ and $\|\lambda_n x_{n,k}\|\geq\lambda_{n_j}$ for all $n=n_j,\dots,n_{j}+q_j-1$ and all $k$. The choice of $B_p$ ensures that $(\lambda_{n_j}-C_{p_j})$ tends to $+\infty$ as $j$ tends to $+\infty$. Therefore, (\ref{EQUNIFORMRUNGE1}) implies that $(f_j)$ converges to some $f\in X$. Let us now fix $j\geq 1$ and $\ell\geq j$. Then, for all $n\in\{n_j,\dots,n_j+q_{p_j}-1\}$ and all possible $k$, 
$$\|T_{\lambda_n x_{n,k}}(f_{\ell+1}-f_\ell)\|_{M_{p_j}}\leq A_{p_j,n_j+q_{p_j}}\|f_{\ell+1}-f_\ell\|_{\lambda_{{n_j}+q_{p_j}}+C_{p_j}}.$$
Now, $\lambda_{n_{\ell+1}}-C_{p_{\ell+1}}\geq\lambda_{n_j}+q_{p_j}+C_{p_j}$ since $\lambda_{n_{\ell+1}}-\lambda_{n_j}\geq n_{\ell+1}-n_j\geq C_{p_{\ell+1}}+C_{p_j}+q_{p_j}$. Hence,
\begin{eqnarray*}
\left\|T_{\lambda_n x_{n,k}}(f_{\ell+1}-f_{\ell})\right\|_{M_{p_j}}&\leq&A_{p_j,n_j+q_{p_j}}\|f_{\ell+1}-f_\ell\|_{\lambda_{n_{\ell+1}}-C_{p_{\ell+1}}}\\
&\leq&\eta_{n_{\ell+1}}\textrm{ by }(\ref{EQUNIFORMRUNGE1}).
\end{eqnarray*}
Summing these inequalities, we have then shown that 
\begin{eqnarray}\label{EQUNIFORMRUNGE2}
\left\|T_{\lambda_n x_{n,k}}f-h_{p_j}\right\|_{M_{p_j}}\leq \sum_{l\geq j}\eta_{n_l}\leq \delta_{n_j}\leq \delta_{p_j}.
\end{eqnarray}
Let us now show that $f\in\bigcap_{a\in S^{d-1}}FHC(T_{\lambda_n a})$. Indeed, let $a\in S^{d-1}$, let $p\in\NN$
  and $N\in \mathbf N_p$. There exist $n\in\{N,\dots,N+q_p-1\}$ and $k$ such that $\|\lambda_n a-\lambda_n x_{n,k}\|<\delta_p.$ Then
\begin{eqnarray*}
\left\|T_{\lambda_n a}f-h_p\right\|_p&\leq&\left\|T_{\lambda_n a-\lambda_n x_{n,k}}\big(T_{\lambda_n x_{n,k}}f-h_p\big)\right\|_p+\left\|T_{\lambda_n a-\lambda_n x_{n,k}}h_p-h_p\right\|_p\\
&\leq&2\veps_p
\end{eqnarray*}
where the last inequality comes from (\ref{EQUNIFORMRUNGE2}) and from the definitions of $\delta_p$ and $M_p$. We now conclude exactly as in the proof of Theorem \ref{THMMAINFHC} that $f\in\bigcap_{a\in S^{d-1}}FHC(T_{\lambda_na})$.
\end{proof}
\begin{example}
The group of translations $(\tau_a)_{a\in\mathbb C}$ acting on $H(\CC)$ is uniformly Runge transitive.
\end{example}
\begin{proof}
Let $\delta\in(0,1)$ and set $\|f\|_n=\sup_{|z|\leq n-\delta}|f(z)|$. We have, for all $p,s\in\NN$, $f\in X$ and $a\in \mathbb C$ with $|a|\leq s$, 
$$\|T_a f\|_p=\sup_{|z-a|\leq p-\delta}|f(z)|\leq \sup_{|z|\leq s+p-\delta}|f(z)|=\|f\|_{s+p}$$
and, if $(z_i)$ is a finite sequence with $\|z_i-z_j\|\geq p$ and $\|z_i\|\geq s$ with $s>p$, the disks $D(z_i,p-\delta)$ are disjoint and disjoint from the disk $D(0,s-p-\delta)$. Hence, Runge's theorem immediately yields the existence of $f\in X$ with 
$$\sup_{|z|<s-p-\delta}|f(z)-g(z)|<\veps\textrm{ and }\sup_{|z-z_i|<p-\delta}|f(z)-h(z-z_i)|<\veps.$$
\end{proof}
In the particular case of the translation group on $H(\CC)$, the proof of Theorem \ref{THMFHCRUNGE} could be slightly simplified by using Arakelian's theorem of approximation of holomorphic functions on closed sets. However, our general theorem may be applied in other contexts, like the translation group on $\mathcal C(\RR^d)$ or on $H(\CC^d)$, if we restrict ourselves to translations by an element of $\RR^d$.

\section{Multiples of a semigroup}
In this section, we show that we can get a common hypercyclic vector if we consider the multiples of an operator group having the Runge property. As usual, we first need a covering lemma.
\begin{lemma}\label{LEMMULT1}
Let $d\geq 1$, $\delta>0$, $B>0$ and $I$ be  a compact interval of $\mathbb R$. Then there exist an integer $r\geq 0$, elements $(y_j)_{j=1,\dots,r}$ of $S^{d-1}$, elements $(\alpha_j)_{j=1,\dots,r}$ of $I$ and integers $(n_j)_{j=1,\dots,r}$ such that 
\begin{itemize}
\item for all $\alpha\in I$ and all $y\in S^{d-1}$, there exists $j\in\{1,\dots,r\}$ with 
$$|\alpha-\alpha_j|<\frac\delta{n_j}\textrm{ and }\|y-y_j\|\leq\frac{\delta}{n_j}.$$
\item for all $j\neq l$ in $\{1,\dots,r\}$, $\|n_j y_j-n_l y_l\|\geq B.$
\item for all $j\in\{1,\dots,r\}$, $n_j\geq B$.
\end{itemize}
\end{lemma}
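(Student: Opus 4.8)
The plan is to reduce the statement to the sphere covering already produced in Lemma~\ref{LEMCOVERINGFHC}, handling the multiplier variable $\alpha$ separately and exploiting the decisive feature that the separation requirement {\bf(B)} constrains only the scaled directions $n_jy_j$ and is completely insensitive to the $\alpha_j$'s. The one elementary geometric fact I would isolate first is that, for any positive integers $n,m$ and any $y,z\in S^{d-1}$,
$$\|ny-mz\|^2=n^2+m^2-2nm\langle y,z\rangle\geq (n-m)^2,$$
so that $\|ny-mz\|\geq|n-m|$. Consequently two cells sitting at integer scales that differ by more than $B$ are automatically $B$-separated whatever their directions, and all the genuine separation work can be confined to cells sharing a common scale.

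The sequence of all positive integers $(\lambda_n)=(n)$ satisfies property (FHCSG): the increment condition is trivial, and the divergence of the harmonic series gives, for each $C$, an integer $p$ with $\sum_{n=N+1}^{N+p}1/n\geq C/N$ uniformly in $N$. I would therefore apply Lemma~\ref{LEMCOVERINGFHC} to $(\lambda_n)=(n)$ with the given $d$, $\delta$ and $B$, obtaining an integer $q$ such that for every $N$ there is a family $(x_{n,k})_{N\le n\le N+q-1}$ in $S^{d-1}$ covering $S^{d-1}$ by the balls $B(x_{n,k},\delta/n)$ and satisfying $\|nx_{n,k}-mx_{m,l}\|\ge B$ whenever $(n,k)\neq(m,l)$. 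I then group the integers into disjoint \emph{blocks} $[N_t,N_t+q-1]$ by setting $N_1=\lceil B\rceil$ and $N_{t+1}=N_t+q+\lceil B\rceil$. Each block carries, via Lemma~\ref{LEMCOVERINGFHC}, a complete covering of the whole sphere whose scales all exceed $B$; cells from distinct blocks live at scales differing by more than $B$, hence are $B$-separated by the inequality above, while cells inside a single block are separated by Lemma~\ref{LEMCOVERINGFHC} itself.

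It remains to sweep out $I$. Here I would attach a single multiplier $\alpha_t\in I$ to \emph{all} the cells of block $t$, marching along $I$ with steps equal to the finest resolution occurring in that block: $\alpha_1=\min I$ and $\alpha_{t+1}=\alpha_t+\delta/(N_t+q-1)$. Since $N_t$ grows only linearly in $t$, the series $\sum_t\delta/(N_t+q-1)$ diverges, so after finitely many blocks $T$ the points $\alpha_1,\dots,\alpha_T$ pass $\max I$; this second use of harmonic divergence is what bounds $r$. The final list consists, for each block $t\le T$ and each sphere cell of that block, of the datum $(y_j,\alpha_j,n_j)=(x_{n,k},\alpha_t,n)$. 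Given $(\alpha,y)$, I pick the block $t$ with $\alpha\in[\alpha_t,\alpha_{t+1})$ and then the cell of block $t$ covering $y$; as its scale satisfies $n\le N_t+q-1$, one gets simultaneously $\|y-y_j\|\le\delta/n$ and $|\alpha-\alpha_t|<\delta/(N_t+q-1)\le\delta/n$, which is {\bf(A)}, whereas {\bf(B)} and $n_j\ge N_1\ge B$ hold by construction.

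I expect the only real subtlety to be the simultaneous bookkeeping that forces one index $j$ to cover both the angular variable and the multiplier: the $\alpha$-step must be taken no larger than the \emph{smallest} radius $\delta/(N_t+q-1)$ present in the block, so that a single value $\alpha_t$ is fine enough for every cell of that block, and the half-open intervals $[\alpha_t,\alpha_{t+1})$ must be used to keep the inequality strict. The rest is routine, the cross-block separation being handed to us for free by $\|ny-mz\|\ge|n-m|$.
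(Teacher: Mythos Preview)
Your proposal is correct and follows essentially the same route as the paper: apply Lemma~\ref{LEMCOVERINGFHC} to the sequence of integers to get $q$, place successive sphere-coverings on disjoint integer blocks of length $q$ separated by at least $B$, attach one value $\alpha_t$ per block, and step through $I$ with increments $\delta/(N_t+q-1)$, relying on harmonic divergence for termination and on $\|ny-mz\|\ge|n-m|$ for cross-block separation. The paper's indexing ($N_m=m(B+q)$ with step $\delta/((m+1)(B+q))$) differs only cosmetically from yours.
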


\begin{proof}
We combine the Costakis-Sambarino method and the methods of the present paper to obtain the right covering. We begin by applying Lemma \ref{LEMCOVERINGFHC} to the whole sequence of integers; we get some $q\in\NN$. Without loss of generality, we may assume that $B\in\NN$. We then define a sequence $(\beta_m)_{m\geq 1}$ by setting $\beta_1=\min(I)$ and $\beta_{m+1}=\beta_{m}+\frac{\delta}{(m+1)(B+q)}$. Let $s\geq 1$ be the biggest integer $m$ such that $\beta_m\leq\max(I)$. For each $m=1,\dots,s$, we then apply Lemma \ref{LEMCOVERINGFHC} with $N=N_m:=m(B+q)$. 
We get elements $(x_{n,k}(m))$ of $S^{d-1}$ for $N_m\leq n\leq N_m+q-1$, $k\leq\omega(n)$. We then rename the $x_{n,k}(m)$ by defining
$$\big \{y_j;\ j=1,\dots,r\big \}:=\big \{x_{n,k}(m);\ 1\leq m\leq s,\ N_m\leq n\leq N_m+q-1,\ k\leq\omega(n)\big \}.$$
For any $j$ in $\{1,\dots,r\}$, there exists a unique $(m,n,k)$ such that $y_j=x_{n,k}(m)$. We then set $n_j=n$ and $\alpha_j=\beta_m$. 

We now verify that the conclusions of Lemma \ref{LEMMULT1} are satisfied. Pick $(\alpha,y)\in I\times S^{d-1}$. There exists $m\in\{1,\dots,s\}$ such that $|\alpha-\beta_m|\leq\frac{\delta}{(m+1)(B+q)}$. This $m$ being fixed, there exist $n\in\{N_m,\dots,N_m+q-1\}$ and $k\leq \omega(n)$ such that $\|y-x_{n,k}(m)\|\leq\frac{\delta}n$. Let $j\in\{1,\dots,r\}$ be such that $y_j=x_{n,k}(m)$ so that $n_j=n$ and $\alpha_j=\beta_m$. Since $n\leq (m+1)(B+q)$, the first part of the conclusions of the lemma is verified.

Suppose now that $j\neq l$ are living in $\{1,\dots,r\}$. If $y_j=x_{k,n}(m)$ and $y_l=x_{n',k'}(m)$ for the same $m$, then the inequality $\|n_jy_j-n_ly_k\|\geq B$ follows directly from
the corresponding inequality of Lemma \ref{LEMCOVERINGFHC}. Otherwise, we simply write
$$\|n_j y_j-n_l y_l\|\geq |n_j-n_l|\geq  B.$$
Finally, for any $j$ in $\{1,\dots,r\}$, $n_j\geq B$ from the very definition of $(N_m)$.
\end{proof}

We need a second lemma related to the continuity of $(\lambda,a)\mapsto \lambda T_a$. It is \cite[Lemma 3.5]{Shk10} where it is formulated for $d=2$, but the proof is unchanged for greater values of $d$. We now assume that $X$ is a Fr\'echet space.
\begin{lemma}\label{LEMMULT2}
Let $(T_a)_{a\in\mathbb R^d}$ be a strongly operator group on $X$, let $g\in X$ and let $\|\cdot\|$ be a continuous seminorm on $X$. Then there exist a continous seminorm $\lvvvert\cdot\rvvvert$ on $X$ and $\delta>0$ such that $\|\cdot\|\leq \lvvvert\cdot\rvvvert$ and, for any $\alpha\in \mathbb R$, $x\in S^{d-1}$, $n\in\mathbb N$ and $f\in X$ satisfying $\lvvvert g-e^{\alpha n}T_{nx}f\rvvvert<1$, we have $\big \|g-e^{\beta n}T_{ny}f\big \|<1$ whenever $\beta\in\mathbb R$ and $y\in S^{d-1}$ are such that $|\alpha-\beta|<\frac{\delta}n$ and $\|y-x\|<\frac{\delta}n$.
\end{lemma}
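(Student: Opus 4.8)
The plan is to reduce the statement to a single elementary perturbation estimate, exploiting the fact that the scaling $\delta/n$ in the hypotheses exactly cancels the factor $n$ sitting inside the group element and the exponential. Writing $h=e^{\alpha n}T_{nx}f$, the group law gives the algebraic identity
$$e^{\beta n}T_{ny}f=e^{(\beta-\alpha)n}\,T_{n(y-x)}\bigl(e^{\alpha n}T_{nx}f\bigr)=e^{c}\,T_{b}h,$$
where $c=(\beta-\alpha)n$ and $b=n(y-x)$. Under the assumptions $|\alpha-\beta|<\delta/n$ and $\|y-x\|<\delta/n$ one has $|c|<\delta$ and $\|b\|<\delta$, so the perturbation parameters $c,b$ are bounded by $\delta$ \emph{independently of $n$}. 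This is the whole point: I never have to control $T_a$ for large $a$.

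Next I would split, using the triangle inequality,
$$g-e^{c}T_{b}h=\bigl(g-e^{c}T_{b}g\bigr)+e^{c}T_{b}(g-h),$$
and estimate the two pieces separately. For the first piece, joint continuity of $(a,u)\mapsto T_a u$ (which, as recalled in the proof of Theorem \ref{THMCOVERING1}, follows from strong continuity together with the uniform boundedness principle on the Fr\'echet space $X$), combined with $e^{c}\to 1$, lets me choose $\delta\in(0,1]$, depending only on $g$ and $\|\cdot\|$, so that $\|g-e^{c}T_{b}g\|<\frac12$ whenever $|c|\le\delta$ and $\|b\|\le\delta$.

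With this $\delta$ now fixed, I turn to constructing $\lvvvert\cdot\rvvvert$. The family $\{T_b;\ \|b\|\le\delta\}$ is pointwise bounded, since each orbit $\{T_b u;\ \|b\|\le\delta\}$ is the continuous image of a compact set, hence bounded; so by Banach--Steinhaus it is equicontinuous, and there is a continuous seminorm $q$ with $\|T_b u\|\le q(u)$ for all $u\in X$ and all $\|b\|\le\delta$. I then set $\lvvvert u\rvvvert=\max\bigl(\|u\|,\,2e^{\delta}q(u)\bigr)$, a continuous seminorm dominating $\|\cdot\|$. If $\lvvvert g-h\rvvvert<1$, then $q(g-h)<1/(2e^{\delta})$, hence $e^{c}\|T_b(g-h)\|\le e^{\delta}q(g-h)<\frac12$. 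Adding the two estimates yields $\|g-e^{c}T_{b}h\|<1$, which is exactly the conclusion.

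The steps are all routine; the only point requiring care is the bookkeeping order of the two seminorm choices: first fix $\delta$ for the $g$-term from continuity, then build $\lvvvert\cdot\rvvvert$ using equicontinuity on the ball of that same radius $\delta$, inserting the factor $2e^{\delta}$ so that the $(g-h)$-term is absorbed. The conceptual heart---and the reason the lemma is useful---is the observation in the first paragraph that the perturbations $n(y-x)$ and $n(\beta-\alpha)$ stay bounded, so that no control of the group at large times is ever needed and everything is uniform in $n$.
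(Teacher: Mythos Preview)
Your proof is correct. The paper does not actually supply its own proof of this lemma: it simply cites \cite[Lemma~3.5]{Shk10} (stated there for $d=2$) and remarks that the argument carries over unchanged to higher $d$. Your argument is precisely the natural one and is, in spirit, the same as Shkarin's: the substitution $h=e^{\alpha n}T_{nx}f$ together with the group law reduces everything to a perturbation $e^{c}T_{b}$ with $|c|,\|b\|<\delta$ independent of $n$, after which the splitting
\[
g-e^{c}T_{b}h=(g-e^{c}T_{b}g)+e^{c}T_{b}(g-h)
\]
lets one handle the two terms by continuity at the identity and by equicontinuity of $\{T_b:\|b\|\le\delta\}$ respectively. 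Your bookkeeping---fixing $\delta$ first from the $g$-term, then building $\lvvvert\cdot\rvvvert=\max(\|\cdot\|,2e^{\delta}q(\cdot))$ from the Banach--Steinhaus seminorm $q$---is clean and correct.
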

We can now give our multidimensional analogue of Shkarin's result.
\begin{theorem}\label{THMMULT}
Let $(T_a)_{a\in\mathbb R^d}$ be a strongly operator group on $X$ with the Runge property. Then $\bigcap_{\lambda\in\mathbb C^*,\ a\in\mathbb R^d\backslash\{0\}} HC(\lambda T_a)$ is a residual subset of $X$.
\end{theorem}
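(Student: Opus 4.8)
The plan is to mimic the strategy behind Shkarin's two-dimensional result: establish an \emph{analytic} common hypercyclicity statement over a cheap one-parameter range of the scalar together with the full sphere of directions, and then recover the whole parameter space $\CC^*\times(\RR^d\backslash\{0\})$ by two successive applications of the algebraic results quoted in the introduction (Le\'on--M\"uller \cite{LeMu04} and Conejero--M\"uller--Peris \cite{CoMuPe07}). The key observation is that the $\|a\|$-direction of the parameter space can be removed \emph{for free} by the semigroup result \cite{CoMuPe07}, so that the analytic construction only has to deal with $S^{d-1}$ and with a single magnitude parameter.

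The analytic core is the following claim: for every compact interval $I\subset\RR$, the set $\bigcap_{\alpha\in I,\,y\in S^{d-1}}HC(e^{\alpha}T_y)$ is residual. By the reduction of \cite[Proposition 7.4]{BM09} already used in the proof of Theorem \ref{THMCOVERING1}, it suffices to show that for all nonempty open $U,V$ the set $U\cap\{f:\ \forall(\alpha,y)\in I\times S^{d-1},\ \exists n,\ e^{\alpha n}T_{ny}f\in V\}$ is nonempty. I would pick $g\in V$ and a continuous seminorm $\|\cdot\|$ with $\{h:\ \|h-g\|<1\}\subset V$, apply Lemma \ref{LEMMULT2} to obtain $\lvvvert\cdot\rvvvert$ and $\delta>0$, choose a continuous seminorm dominating both $\lvvvert\cdot\rvvvert$ and a seminorm defining $U$, and let $C$ be the associated Runge constant. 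Applying Lemma \ref{LEMMULT1} with this $\delta$ and $B=C$ produces $(y_j,\alpha_j,n_j)_{j=1,\dots,r}$; since $\|n_jy_j-n_\ell y_\ell\|\geq C$ for $j\neq\ell$ and $\|n_jy_j\|=n_j\geq C$, the points $0,n_1y_1,\dots,n_ry_r$ are admissible for the Runge property. Feeding it the targets $g_0\in U$ at $0$ and $e^{-\alpha_j n_j}g$ at $n_jy_j$, with tolerance $\veps=\min\big(1,\min_j e^{-\alpha_j n_j}\big)$, yields $f\in U$ with $\lvvvert g-e^{\alpha_j n_j}T_{n_jy_j}f\rvvvert<1$ for every $j$. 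For an arbitrary $(\alpha,y)\in I\times S^{d-1}$, the covering clause of Lemma \ref{LEMMULT1} gives $j$ with $|\alpha-\alpha_j|<\delta/n_j$ and $\|y-y_j\|\leq\delta/n_j$, and Lemma \ref{LEMMULT2} then delivers $\|g-e^{\alpha n_j}T_{n_j y}f\|<1$, i.e.\ $(e^{\alpha}T_y)^{n_j}f\in V$.

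For the algebraic reductions, fix $t>0$, $x\in S^{d-1}$ and $\lambda\in\CC^*$. Choosing a branch of $\lambda^{1/t}$, the family $S_s:=\lambda^{s/t}T_{sx}$ is a strongly continuous one-parameter group with $S_t=\lambda T_{tx}$ and $S_1=\lambda^{1/t}T_x$, so \cite{CoMuPe07} gives $HC(\lambda T_{tx})=HC(\lambda^{1/t}T_x)$; letting $\lambda$ run over $\CC^*$ and reindexing shows $\bigcap_{\lambda\in\CC^*}HC(\lambda T_{tx})=\bigcap_{\mu\in\CC^*}HC(\mu T_x)$, which does not depend on $t$. Next \cite{LeMu04} gives $HC(\mu T_x)=HC(|\mu|T_x)$. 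Writing $a=tx$ with $t=\|a\|$, $x\in S^{d-1}$, and $|\mu|=e^{\alpha}$, these two facts collapse the full intersection:
\[
\bigcap_{\lambda\in\CC^*,\, a\in\RR^d\backslash\{0\}}HC(\lambda T_a)
=\bigcap_{x\in S^{d-1}}\ \bigcap_{\mu\in\CC^*}HC(\mu T_x)
=\bigcap_{x\in S^{d-1},\,\alpha\in\RR}HC(e^{\alpha}T_x).
\]
Covering $\RR=\bigcup_{k\in\ZZ}[k,k+1]$ and intersecting, the right-hand side becomes a countable intersection of the residual sets furnished by the analytic core, hence is residual since the Fr\'echet space $X$ is Baire.

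The only routine points left are the mild boundary mismatch between the strict and non-strict inequalities in Lemmas \ref{LEMMULT1} and \ref{LEMMULT2} (absorbed by shrinking $\delta$) and the seminorm bookkeeping when one simultaneously requires $f\in U$ and the approximations above. I expect the genuine difficulty to lie entirely in the analytic core, but that difficulty has already been packaged into Lemmas \ref{LEMMULT1} and \ref{LEMMULT2}; once they are granted, the decisive step is the realization that \cite{CoMuPe07} erases the radial parameter $\|a\|$, reducing an apparently $(d+2)$-dimensional problem to the covering of $S^{d-1}\times I$ handled by the Runge property.
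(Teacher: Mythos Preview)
Your proposal is correct and follows essentially the same route as the paper: the same algebraic reductions (Le\'on--M\"uller for the argument of $\lambda$, Conejero--M\"uller--Peris for the radial direction of $a$) collapse the problem to $I\times S^{d-1}$, and the analytic core combines Lemmas \ref{LEMMULT1} and \ref{LEMMULT2} with the Runge property exactly as in the paper. If anything, your bookkeeping is more explicit---you spell out the Runge targets $e^{-\alpha_j n_j}g$ and the required tolerance $\veps=\min(1,\min_j e^{-\alpha_j n_j})$, and your formulation of the algebraic reduction (reindexing over $\lambda\in\CC^*$ rather than asserting a pairwise equality of $HC$-sets) is cleaner than the paper's slightly loose sentence ``$HC(\lambda T_a)=HC(\mu T_b)$ provided $|\lambda|=|\mu|$ and $a=\theta b$''.
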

\begin{proof}
First of all, as we have already done previously, we apply the algebraic results of Leon and M\"uller and of Conejero, M\"uller and Peris. They give that $HC(\lambda T_a)=HC(\mu T_b)$ provided
$|\lambda|=|\mu|$ and $a=\theta b$ for $\theta>0$. Hence, it is sufficient to show that, for any compact set $I\subset \mathbb R$, the family of operators $\{e^\alpha T_a;\ \alpha\in I,\ a\in S^{d-1}\}$ shares a common hypercyclic vector. For this, we argue as in the proof of Theorem \ref{THMCOVERING1} by picking two nonempty open subsets $U$ and $V$ of $X$ and by showing that
$$U\cap\big\{f\in X;\ \forall \alpha\in I,\ \forall a\in S^{d-1},\ \exists n\in\mathbb N, e^{n\alpha}T_{na}f\in V\big\}\neq\varnothing.$$
Let $g,h\in X$, $\|\cdot\|$ be a continuous seminorm on $X$ such that $\{f\in X;\ \|f-h\|<1\}\subset U$ and $\{f\in X;\ \|f-g\|<1\}\subset V$. Let $\delta>0$ and let
$\lvvvert\cdot\rvvvert$ be a seminorm on $X$ satisfying the conclusions of Lemma \ref{LEMMULT2}. Let also $C>0$ be given by the Runge property for this last seminorm $\lvvvert\cdot\rvvvert$. 
We apply Lemma \ref{LEMMULT1} with $I$, $\delta>0$ and $B=C$ to get finite sequences $(\alpha_j)$, $(y_j)$ and $(n_j)$. By the Runge property, there exists $f\in X$ such that 
\begin{itemize}
\item $\lvvvert f-h\rvvvert<1$;
\item for any $j=1,\dots,r$, $\lvvvert e^{n_j\alpha_j}T_{n_jy_j}f-g\rvvvert<1.$
\end{itemize}
Then $f\in U$. Moreover, for any $(\alpha,a)\in I\times S^{d-1}$, there exists $j$ such that $|\alpha-\alpha_j|<\delta/n_j$ and $\|a-y_j\|<\delta/n_j$. By the choice of $\delta$ and $\lvvvert\cdot\rvvvert$, this yields $\|e^{n_j\alpha}T_{n_j a}f-g\|<1$, which concludes the proof of Theorem \ref{THMMULT}.
\end{proof}
By considering multiples of a semigroup, we cannot go much further; in particular, we cannot get a common frequently hypercyclic vector for the family $\big\{\lambda T_a;\ \lambda>0,\ a\in S^{d-1}\big\}$. In fact, this cannot be the case even for an uncountable family of multiples of a single operator, as the following proposition points out. It should be noticed that it improves \cite[Theorem 4.5]{BAYGRITAMS} where $T$ was equal to $B$ the backward shift. However, the proof remains almost identical.
\begin{proposition}
Let $T$ be a continuous operator acting on the $F$-space $X$ and  let $\Lambda$ be an uncountable subset of $(0,+\infty)$.  Then
the set of common frequently hypercyclic vectors for the family
$(\lambda T)_{\lambda \in\Lambda}$ is empty.
\end{proposition}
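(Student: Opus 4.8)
The plan is to argue by contradiction, extracting from a putative common frequently hypercyclic vector an uncountable, pairwise almost-disjoint family of subsets of $\NN$, each of positive lower density, and then to rule this out by a counting argument. We may assume $X\neq\{0\}$, the statement being vacuous otherwise, and suppose that some $x$ is frequently hypercyclic for every $\lambda T$, $\lambda\in\Lambda$. First I would fix a vector $y\in X\setminus\{0\}$, write $\eta=\|y\|>0$ for its $F$-norm, choose $\veps\in(0,\eta/3)$, set $U=\{z;\ \|z-y\|<\veps\}$, and for each $\lambda\in\Lambda$ put
$$A_\lambda=\{n\geq 1;\ \lambda^n T^n x\in U\}.$$
Since $(\lambda T)^n=\lambda^n T^n$ and $x\in FHC(\lambda T)$, each $A_\lambda$ has positive lower density.

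The main step, and the only delicate point, is to show that $A_\lambda\cap A_\mu$ is finite whenever $\lambda\neq\mu$. One must be careful here: in a general $F$-space the $F$-norm is not homogeneous, so the naive idea that ``$\mu^n T^n x$ has enormous norm'' fails (for instance $\|cu\|$ may stay bounded as $c\to+\infty$). Instead I would use only the contraction inequality $\|\alpha v\|\leq\|v\|$ for $|\alpha|\leq1$ together with continuity of scalar multiplication. Assume $\lambda<\mu$ and set $r=\lambda/\mu\in(0,1)$. If $n\in A_\lambda\cap A_\mu$, write $w_n=\mu^nT^nx$, so that $\|w_n-y\|<\veps$ and $\lambda^nT^nx=r^nw_n$. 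Then $\|r^nw_n-r^ny\|=\|r^n(w_n-y)\|\leq\|w_n-y\|<\veps$, while $\|r^nw_n-y\|=\|\lambda^nT^nx-y\|<\veps$, whence $\|y-r^ny\|<2\veps$. But $r^ny\to0$ for the \emph{fixed} vector $y$, so $\|y-r^ny\|\to\eta>2\veps$, which forces $\|y-r^ny\|<2\veps$ for only finitely many $n$. Hence $A_\lambda\cap A_\mu$ is finite.

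It then remains to extract the contradiction by counting. Writing $\Lambda=\bigcup_{k\geq1}\{\lambda;\ \ldens(A_\lambda)\geq1/k\}$, uncountability of $\Lambda$ forces one of these sets to be infinite; I would fix such a $k$, set $\delta=1/k$, choose an integer $m$ with $m\delta>1$, and pick distinct $\lambda_1,\dots,\lambda_m$ with $\ldens(A_{\lambda_i})\geq\delta$. Since the pairwise intersections $A_{\lambda_i}\cap A_{\lambda_j}$ are finite, the Bonferroni inequality gives, for every $N$,
$$\#\Big(\textstyle\bigcup_{i=1}^m A_{\lambda_i}\cap[1,N]\Big)\geq\sum_{i=1}^m\#\big(A_{\lambda_i}\cap[1,N]\big)-\sum_{i<j}\#\big(A_{\lambda_i}\cap A_{\lambda_j}\cap[1,N]\big).$$
Dividing by $N$ and taking $\liminf$, the subtracted terms tend to $0$ (finite sets) while $\liminf_N\frac1N\sum_i\#(A_{\lambda_i}\cap[1,N])\geq m\delta>1$; thus $\ldens\big(\bigcup_i A_{\lambda_i}\big)>1$, which is absurd since every subset of $\NN$ has lower density at most $1$. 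This contradiction proves that the family $(\lambda T)_{\lambda\in\Lambda}$ admits no common frequently hypercyclic vector. I expect the pairwise-finiteness step to be the crux, precisely because it must be carried out without any appeal to homogeneity of the $F$-norm; once it is secured, the density count is routine.
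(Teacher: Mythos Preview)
Your proof is correct and follows the same overall strategy as the paper's: define, for each $\lambda$, a set of integers of positive lower density, show the pairwise intersections are finite, and derive a contradiction by counting. The paper cites \cite{BAYGRITAMS} for the last step, whereas you spell out the Bonferroni argument explicitly; these are the same.

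There is, however, one genuine and valuable difference. The paper fixes a nonzero continuous linear functional $x^*$ and sets $\mathbf N_\lambda=\{n;\ x^*(\lambda^nT^nx)\in(1/2,3/2)\}$, so that the comparison between $\lambda$ and $\mu$ reduces to the scalar statement $(\lambda/\mu)^n\in(1/3,3)$. This is clean but tacitly assumes $X^*\neq\{0\}$, which can fail for an $F$-space (e.g.\ $L^p[0,1]$ with $0<p<1$). You sidestep this entirely by working with an $F$-norm ball around a nonzero vector and using only the contraction property $\|\alpha v\|\leq\|v\|$ for $|\alpha|\leq1$ together with continuity of scalar multiplication. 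Your argument therefore applies to \emph{all} $F$-spaces, including those with trivial dual, while the paper's version needs the extra hypothesis that $X$ admits a nonzero continuous linear functional. The price you pay is a slightly longer verification of pairwise finiteness, but you have handled it carefully.
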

\begin{proof}
Let $x^*$ be a nonzero linear functional on $X$ and assume by contradiction that $x$ is a common frequently hypercyclic vector for all operators $\lambda T$, $\lambda\in\Lambda$, 
where $\Lambda$ is uncountable. Let, for any $\lambda\in\Lambda$, 
$$\mathbf N_\lambda=\big\{n\in\mathbb N;\ x^*(\lambda^n T^n x)\in (1/2,3/2)\big\},\ \delta_\lambda=\ldens(\mathbf N_\lambda).$$
For all $\lambda\in\Lambda$, $\delta_\lambda>0$. Since $\Lambda$ is uncountable, this implies that there exist $\lambda\neq \mu$ in $\Lambda$ such that $\mathbf N_\lambda\cap \mathbf N_\mu$ is infinite (see \cite{BAYGRITAMS} for details).
Now, if $n$ belongs to $\mathbf N_\lambda\cap \mathbf N_\mu$, then $ \lambda^n (T^*)^n x^* (x)\in (1/2,3/2)$ and $\mu^n (T^*)^nx^*(x)\in (1/2,3/2)$. This yields
$(\lambda/\mu)^n \in (1/3,3)$, which is a contradiction since $n$ may be chosen as large as we want.
\end{proof}

Our argument in this section is really specific to operators groups having the Runge property. In view of \cite[Corollary 1.10]{Shk10} and of the other results of the present paper, the following question seems natural.
\begin{question}
Let $(T_a)_{a\in\mathbb R^d}$ be a strongly continuous operator group. Assume that there exist $\delta\in(0,1)$ and $D\subset X$ a dense subset of $X$ such that, for any $f\in D$, 
$\|T_a f\|\leq C_f \delta^{\|a\|(d-1)}.$
Does there exist a common hypercyclic vector for the family $\big \{\lambda T_a;\ \delta<|\lambda|<1/\delta,\ a\in\mathbb R^d\backslash\{0\}\big\}$?
\end{question}

\section{A new light on the algebraic method}
\subsection{Yet another criterion}
We now show how the algebraic method can also lead to common hypercyclic results in high dimension.
\begin{theorem}
Let $G$ be a compact topological group, let $(T_{n,g})_{(n,g)\in\NN\times G}$ be a strongly continuous operator semigroup on $X$. Then for any $g\in G$, $HC(T_{1,g})=HC(T_{1,1_G})$.
\end{theorem}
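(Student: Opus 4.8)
The plan is to reduce the statement to a group version of the Le\'on--M\"uller rotation theorem and then run the ``algebraic method.'' Write $T=T_{1,1_G}$ and, for $g\in G$, set $S_g=T_{0,g}$. The semigroup law gives $S_gS_h=S_{gh}$ and $S_{1_G}=\mathrm{Id}$, so $(S_g)_{g\in G}$ is a strongly continuous representation of $G$ by \emph{invertible} operators ($S_g^{-1}=S_{g^{-1}}$), and $S_gT=T_{1,g}=TS_g$, so every $S_g$ commutes with $T$. Since $(S_gT)^n=S_{g^n}T^n=T_{n,g^n}$, the orbit of $x$ under $T_{1,g}$ is exactly $\{S_{g^n}T^nx;\ n\ge0\}$, and the claim becomes $HC(S_gT)=HC(T)$. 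As only the powers $S_{g^n}$, $n\in\ZZ$, occur, I may replace $G$ by the compact \emph{abelian} group $H=\overline{\langle g\rangle}$. Moreover it suffices to prove the single inclusion $HC(T)\subseteq HC(S_gT)$ for any operator commuting with such a representation: applied to the operator $S_gT$, to the group $H$ (which commutes with $S_gT$ because $H$ is abelian and $g\in H$), and to the element $g^{-1}\in H$, it yields $HC(S_gT)\subseteq HC(S_{g^{-1}}S_gT)=HC(T)$, whence equality.

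To prove $HC(T)\subseteq HC(S_gT)$, I would fix $x\in HC(T)$, put $U=S_gT$ and $Y=\overline{\{U^nx;\ n\ge0\}}$, and show $Y=X$. As in the proof of Theorem \ref{THMCOVERING1}, strong continuity together with the uniform boundedness principle makes $(h,z)\mapsto S_hz$ jointly continuous on the compact group $H$. Given $y\in X$, density of $\{T^nx\}$ furnishes $n_k\to\infty$ with $T^{n_k}x\to y$, and compactness of $H$ lets me assume $g^{n_k}\to h$ for some $h\in H$; then $U^{n_k}x=S_{g^{n_k}}T^{n_k}x\to S_hy$, so $S_hy\in Y$. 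Thus each $y$ lies in some $S_{h^{-1}}Y$, i.e. $\bigcup_{h\in H}S_hY=X$. Since every $S_h$ is a homeomorphism and $Y$ is closed, it then suffices to show that $Y$ is $H$-invariant, for then $X=\bigcup_{h}S_hY=Y$; and because $\{g^n;\ n\ge0\}$ is dense in the compact monothetic group $H$ and $Y$ is closed, $H$-invariance reduces to the single inclusion $S_gY\subseteq Y$.

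The inclusion $S_gY\subseteq Y$ is the heart of the matter and the expected main obstacle. Since the generating points $S_{g^n}T^nx$ carry the \emph{same} index $n$ on the $S$-factor and the $T$-factor, applying $S_g$ shifts this alignment, and recognising $S_{g^{n+1}}T^nx$ back inside $Y$ demands a \emph{linked recurrence}: indices $m$ with $T^mx$ close to a prescribed $T^nx$ \emph{and simultaneously} $g^m$ close to the prescribed phase $g^{n+1}$. Neither the (merely infinite) set of orbit return times nor the (syndetic) set of phase returns $\{m;\ g^m\approx g^{n+1}\}$ controls the other, so the two must be coupled; this is precisely the orbit-closure rotation-invariance underlying the Le\'on--M\"uller theorem \cite{LeMu04} and its semigroup analogue \cite{CoMuPe07}, and I would resolve it in the same way, exploiting the compactness of $H$. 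The relevant input is that the $H$-orbit $K=\{S_hx;\ h\in H\}$ is compact and satisfies $\bigcup_{n\ge0}U^nK=\{S_hT^nx;\ h\in H,\ n\ge0\}$, a set containing the dense orbit $\{T^nx;\ n\ge0\}$; coupling this compact-set density with the joint continuity above is what forces the missing invariance.

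Finally, it is worth isolating the case where $g$ has finite order $k$, which bypasses the difficult step entirely: then $g^k=1_G$ gives $(S_gT)^k=S_{g^k}T^k=T^k$, so two applications of Ansari's theorem \cite{BM09} yield $HC(S_gT)=HC\big((S_gT)^k\big)=HC(T^k)=HC(T)$ at once. This both checks the plan on a nontrivial subclass and indicates that the genuine work lies in the infinite-order directions of $H$, exactly where the linked-recurrence argument of the previous paragraph is needed.
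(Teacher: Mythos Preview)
Your setup and reductions are all correct: the semigroup law gives the commuting representation $(S_h)_{h\in H}$ with $S_gT=TS_g=T_{1,g}$, the restriction to the compact abelian monothetic group $H=\overline{\langle g\rangle}$ is legitimate, and the reduction to the single inclusion $HC(T)\subseteq HC(S_gT)$ via $g\mapsto g^{-1}$ is a nice economy (the paper instead proves both inclusions by the same direct computation). Your Step~1, showing $\bigcup_{h\in H}S_hY=X$, is fine, and the reduction of $H$-invariance of $Y$ to the single inclusion $S_gY\subseteq Y$ via density of $\{g^n:n\ge0\}$ in $H$ is correct.

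The gap is exactly where you say it is, and your hint does not close it. Knowing that $K=\{S_hx:h\in H\}$ is compact and that $\bigcup_nU^nK=\{S_hT^nx:h\in H,\ n\ge0\}$ is dense tells you only that the $H$-saturation of $Y$ is all of $X$, which you already knew from Step~1; it does not force $S_gY\subseteq Y$. What is actually needed is the statement that for $x\in HC(T)$ the set $\{(T^nx,g^n):n\ge0\}$ is dense in $X\times H$, i.e.\ that one can prescribe the limit of $T^{n_k}x$ \emph{and} the limit of $g^{n_k}$ simultaneously. This universality result is nontrivial; its proof (due to Shkarin) goes through an analysis of the fibers $F_y=\{h\in H:(y,h)\in\overline{\{(T^nx,g^n)\}}\,\}$ and uses compactness of $H$ in a more delicate way than your sketch indicates.

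The paper's proof sidesteps all of this by invoking precisely that universality result as a black box (\cite[Theorem~2.2]{BAYCOS}, itself built on \cite{Shk08b}): given $u\in HC(T_{1,1_G})$ and a target $v$, one picks $(n_k)$ with $T_{1,1_G}^{\,n_k}u\to v$ \emph{and} $g^{n_k}\to 1_G$, and then a two-line computation using joint continuity gives $T_{1,g}^{\,n_k}u\to v$. So your ``linked recurrence'' is exactly the content of the cited lemma; once you are willing to quote it, your proof collapses to the paper's. Your finite-order remark via Ansari is a pleasant sanity check but, as you note, orthogonal to the real issue.
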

\begin{proof}
Let $u\in HC(T_{1,1_G})$, let $v\in X$ and let $g\in G$. By \cite[Theorem 2.2]{BAYCOS} (which is itself a consequence of results of \cite{Shk08b}), there exists a sequence $(n_k)$ of integers such that 
$$(T_{1,1_G})^{n_k}u\to v\textrm{ and }g^{n_k}\to 1_G.$$
Then
\begin{eqnarray*}
(T_{1,g})^{n_k}u-v&=&T_{n_k,g^{n_k}}u-T_{0,g^{n_k}}v+T_{0,g^{n_k}}v-v\\
&=&T_{0,g^{n_k}}(T_{n_k,1_G}u-v)+T_{0,g^{n_k}}v-v.
\end{eqnarray*}
Now,  since $(T_{n,g})_{(n,g)\in \NN\times G}$ is strongly continuous, the map $(h,w)\in G\times X\mapsto T_{0,h}w$ is continuous by the uniform boundedness principle. We easily deduce that $(T_{1,g})^{n_k}u$ tends to $v$. 

Conversely, let $g\in G$, $u\in HC(T_{1,g})$ and $v\in X$. Let $(n_k)$ be a sequence of integers such that 
$$(T_{1,g})^{n_k}u\to v\textrm{ and }g^{-n_k}\to 1_G.$$
Then we get that $T_{1,1_G}^{n_k}u$ tends to $v$ by writing 
$$T_{1,1_G}^{n_k}u-v=T_{0,g^{-n_k}}(T_{n_k,g^{n_k}}u-v)+T_{0,g^{-n_k}}v-v.$$
\end{proof}
This theorem may be applied with $G=\TT$ the unit circle and $T_{n,\xi}=\xi T^n$ with $T$ any operator on $X$. It gives back the Leon-M\"uller theorem. However, as we have promised, it also leads to interesting multidimensional results.
\subsection{Hyperbolic automorphisms of the ball}
We come back to our discussion on composition operators on the Hardy space of the Siegel upper half-space $\hd$. Let $U\in\CC^{(d-1)\times(d-1)}$ be a unitary matrix, let $\lambda>1$ and define
$$\phi_{\lambda,U}(z,\bw)=(\lambda z,U\bw).$$
The maps $\phi_{\lambda,U}$ are automorphisms of $\hd$ (now hyperbolic automorphisms) and it has been shown in \cite{GKX00}  that, for any $\lambda>1$ and any $U\in \mathbb U(\CC^{d-1})$ (the set of unitary matrices over $\CC^{d-1}$), the composition operator $C_{\phi_\lambda,U}$ is hypercyclic on $\H^2(\hd)$.
\begin{theorem}\label{THMALGEBRAICHIGH}
The set $\bigcap_{\lambda>1,\ U\in\mathbb U(\CC^{d-1})} HC(C_{\phi_{\lambda,U}})$ is a residual subset of $\H^2(\hd)$. 
\end{theorem}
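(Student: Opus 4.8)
The plan is to reduce the whole uncountable intersection to the hypercyclic vectors of a \emph{single} operator, by two successive applications of the algebraic method: one to eliminate the unitary parameter $U$ via the preceding theorem, and one to eliminate the dilation parameter $\lambda$ via the classical one-parameter semigroup result of Conejero, M\"uller and Peris. The starting observation is that composition operators are contravariant, $C_\phi C_\psi=C_{\psi\circ\phi}$, and that $\phi_{\lambda,U}\circ\phi_{\mu,V}=\phi_{\lambda\mu,UV}$; hence the family $(C_{\phi_{\lambda,U}})$ is a representation of the group $(0,+\infty)\times\mathbb U(\CC^{d-1})$, and it is strongly continuous, this being checked exactly as for the Heisenberg group $(C_{H_\gamma})$ earlier in the paper.

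First I would fix $\lambda>1$ and contract the compact direction. Setting $T_{n,U}=C_{\phi_{\lambda^n,U}}$, the composition rule gives
$$T_{n,U}T_{m,V}=C_{\phi_{\lambda^{m+n},\,VU}}=T_{n+m,\,VU},$$
so that $(T_{n,U})_{(n,U)\in\NN\times G}$ is a strongly continuous operator semigroup once we endow $G=\mathbb U(\CC^{d-1})$ with the reversed product $(U,V)\mapsto VU$ (its opposite group, again a compact topological group). Equivalently, one may set $T_{n,U}=C_{\phi_{\lambda^n,U^{-1}}}$ and keep the usual product; since $U\mapsto U^{-1}$ is a bijection of $\mathbb U(\CC^{d-1})$ this changes nothing. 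As $G$ is compact and $1_G=I$, the preceding theorem yields
$$HC(C_{\phi_{\lambda,U}})=HC(T_{1,U})=HC(T_{1,I})=HC(C_{\phi_{\lambda,I}})\qquad\text{for every }U\in\mathbb U(\CC^{d-1}).$$

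It then remains to eliminate $\lambda$. The maps $\phi_{e^t,I}\colon(z,\bw)\mapsto(e^t z,\bw)$, $t\in\RR$, form a one-parameter group of automorphisms of $\hd$, so $(C_{\phi_{e^t,I}})_{t\in\RR}$ is a strongly continuous one-parameter group on $\hhd$. By the algebraic result of Conejero, M\"uller and Peris \cite{CoMuPe07}, $HC(C_{\phi_{e^t,I}})=HC(C_{\phi_{e,I}})$ for every $t>0$, that is $HC(C_{\phi_{\lambda,I}})=HC(C_{\phi_{e,I}})$ for every $\lambda>1$. Combining the two steps, every set $HC(C_{\phi_{\lambda,U}})$ with $\lambda>1$ and $U\in\mathbb U(\CC^{d-1})$ equals the single set $HC(C_{\phi_{e,I}})$, whence
$$\bigcap_{\lambda>1,\ U\in\mathbb U(\CC^{d-1})}HC(C_{\phi_{\lambda,U}})=HC(C_{\phi_{e,I}}).$$
Since $C_{\phi_{e,I}}$ is hypercyclic by \cite{GKX00}, its set of hypercyclic vectors is a residual subset of $\hhd$, and the theorem follows.

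The computations are entirely routine; the only points requiring care are getting the order of composition right, which is why one passes to the opposite group (or inverts $U$), and verifying the strong continuity of the two families, which is standard for composition operators depending continuously on their symbol. The genuinely new ingredient is confined to the preceding theorem, which contracts the high-dimensional compact direction $\mathbb U(\CC^{d-1})$ in one stroke; the non-compact dilation direction is then handled by the classical one-parameter result, exactly as in the two-dimensional reductions used throughout the paper. I expect no serious obstacle beyond these bookkeeping verifications.
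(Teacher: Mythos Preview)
Your proof is correct and follows essentially the same two-step route as the paper: first collapse the unitary parameter via the preceding compact-group theorem, then collapse the dilation parameter via Conejero--M\"uller--Peris, reducing everything to $HC(C_{\phi_{e,I}})$. You are actually slightly more careful than the paper about the contravariance of composition (the need to pass to the opposite group of $\mathbb U(\CC^{d-1})$ or invert $U$); the paper simply asserts that $T_{n,U}=C_{\phi_{\lambda^n,U}}$ is a semigroup without addressing this bookkeeping.
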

\begin{proof}
The proof is divided into two parts, the multidimensional part and the one-dimensional part. For the multidimensional part, we fix $\lambda>1$, we set $G=\mathbb U(\CC^{d-1})$ and we write $T_{n,U}=C_{\phi_{\lambda^n,U}}$. This is clearly a strongly continuous semigroup on $\H^2(\hd)$. Hence we may apply Theorem \ref{THMALGEBRAICHIGH} and we know that $HC(C_{\phi_{\lambda,U}})=HC(C_{\phi_{\lambda,I}})$ for any $U\in\mathbb U(\CC^{d-1})$.

It is now sufficient to show that, for any $\lambda>1$, $HC(C_{\phi_{\lambda,I}})=HC(C_{\phi_{e,I}})$. This is also due to a semigroup argument! Indeed, define, for $a>0$, $T_a=C_{\phi_{e^a,I}}$. Then $(T_a)_{a>0}$ is a strongly continuous semigroup hence by the Conejero-M\"uller-Peris theorem, $HC(T_a)=HC(T_1)$ for any $a>0$.
\end{proof}
The previous result and Theorem \ref{THMHEISENBERG} suggest the following natural question.
\begin{question}
Let $\mathcal A$ the set of all automorphisms of $\hd$ with $+\infty$ as attractive fixed point. Is $\bigcap_{\phi\in\mathcal A}HC(C_\phi)$ nonempty?
\end{question}

\providecommand{\bysame}{\leavevmode\hbox to3em{\hrulefill}\thinspace}
\providecommand{\MR}{\relax\ifhmode\unskip\space\fi MR }
\providecommand{\MRhref}[2]{%
  \href{http://www.ams.org/mathscinet-getitem?mr=#1}{#2}
}
\providecommand{\href}[2]{#2}

\end{document}